\def\dOi{10(2:13)2014}
\keywords{probabilistic beliefs, probability logic, Harsanyi type spaces, definability in multi-modal logic}
\newcommand{\rool}[3]{\frac{\textstyle #1}{\textstyle #2} {#3}}
\begin{document}

\title[Probability Logic]{Probability Logic for Harsanyi Type Spaces}

\author[C.~Zhou]{Chunlai Zhou}	
\address{Key Lab for Data and Knowledge Engineering, MOE,
   Department of Computer Science and Technology, School of Information,
   Renmin University of China, Beijing 100872, CHINA}	
\email{czhou@ruc.edu.cn}  
\thanks{This research is partly supported by
NSF of China (Grant Number: 60905036), by Key project for basic research from the Ministry of Science and Technology
of China (Grant Number: 2012CB316205) and by German Academic Exchange Service (DAAD Codenumber: A/08/94918).}	







\begin{abstract}
  \noindent Probability logic has contributed to significant developments in belief types for game-theoretical economics.  We present a new probability logic for Harsanyi Type spaces, show its completeness, and prove both a denesting property and a unique extension theorem.  We then prove that multi-agent interactive epistemology has greater complexity than its single-agent counterpart by showing that if the probability indices of the belief language are restricted to a finite set of rationals and there are finitely many propositional letters, then the canonical space for probabilistic beliefs with one agent is finite while the canonical one with at least two agents has the cardinality of the continuum. Finally, we generalize the three notions of  definability in multimodal logics to logics of probabilistic belief and knowledge, namely implicit definability, reducibility, and explicit definability.  We find that S5-knowledge can be implicitly defined by probabilistic belief but not reduced to it and hence is not explicitly definable by probabilistic belief.

\end{abstract}

\maketitle

\section{Introduction}

Probability logic plays an important role in computer science \cite{LS91, DEP02,Panangaden09, Doberkat10,Koz81, MorganMS96,HermannsPSWZ11},
 artificial intelligence \cite{Halpern91, FHM90, FH94, Shoham09} and
 economics \cite{AH02, HM01}.
 Game theoretical economics and multi-agent systems in artificial intelligence motivate our logical exploration of Harsanyi type spaces. In the context of  Harsanyi type spaces,   the introspection condition  in the multi-agent setting of game theory states that each agent
 is
certain (in the sense of belief with probability 1) of his degree of belief at every state.  In this paper, we  develop a logical theory of knowledge and belief for multi-agent systems in  Harsanyi type spaces by applying  techniques from probability logic \cite{Zhou09, Zhou10, Zhou11, ZhouY12}.

 Here we are concerned only about
 \emph{probabilistic beliefs} such as the
statement ``I believe that the chance of rain today is at least
ninety percent." This belief is about another statement that it will
rain today. Mathematically, statements are modeled as events. Since
the first statement involves probabilities, it is natural to
consider these statements in the context of a measurable space $S
=\langle \Omega, \mathcal{A}\rangle$ where $\Omega$ is a state space
and $\mathcal{A}$ is a $\sigma$-algebra on this space. So we
interpret this quantified belief using an operator $B^{\geq 0.9}$ on
$\mathcal{A}$, i.e. a mapping from $\mathcal{A}$ to $\mathcal{A}$.
  If $A$ stands for the event that
it will rain today, then $B^{\geq 0.9}A$ denotes the belief that the
chance of rain today is at least ninety percent. For any index $r
\in \mathbb{Q}\cap[0, 1]$ where $\mathbb{Q}$ is the set of
rationals, we define the operator $B^{\geq r}$  similarly. For such a
family of belief operators (also called probability operators) $B^{\geq r}$, one can easily check that such a family of belief operators  satisfies
 the following properties \cite{Sam00}: for events $A, A_n\in
\mathcal{A}$,

\begin{eqnarray}
    B^{\geq 0}(A) & = & \Omega\label{1}\\
    B^{\geq 1}(\Omega) & = & \Omega \\
    B^{\geq r} A & \subseteq & \sim B^{\geq s} (\sim A), r+s > 1\\
    r_n \uparrow r & \Rightarrow & B^{\geq r_n} A \downarrow B^{\geq r} A\label{4}\\
    B^{\geq r}(A\cap B) \cap B^{\geq s}(A\cap (\sim B)) & \subseteq & B^{\geq r+s}A \label{5}\\
    \sim B^{\geq r}(A\cap B) \cap \sim B^{\geq s}(A\cap (\sim B)) & \subseteq & \sim B^{\geq r+s} A\label{6}\\
    A_n  \downarrow A & \Rightarrow & B^{\geq r} A_n \downarrow B^{\geq r}
    A\label{7}
\end{eqnarray}

 \noindent where $\uparrow (\downarrow)$ means infinitely approaching
by an (a) increasing (decreasing) sequence and $\sim$ means set-complement.  The first three
properties say that degrees of beliefs are always between 0 and 1. Properties
(\ref{5}) and (\ref{6}) state that belief operators are finitely
additive. Property (\ref{7}) is the \emph{continuity from above property}
from measure theory. These three properties (\ref{5})-(\ref{7}) ensure that belief
operators are $\sigma$-additive.  Property $(\ref{4})$ states that these
operators are continuous in degrees (indices $r$ in $B^{\geq r}$). This
kind of treatment of belief interpreted by operators is analogous to
the treatment of knowledge in Kripke structures, interpreted by
partition-induced operators. As in the ``possible-world" semantics for knowledge, we need a
quantified version of ``Kripke structures" for probabilistic beliefs.
Such quantified Kripke structures, or \emph{belief
types},
 play a major role in game-theoretical economic theory.

 There are two approaches to belief types  of a multi-agent setup in game theory.
The first represents beliefs explicitly and is called
\emph{explicit description} of beliefs \cite{MZ85, Hei93}. Such a
description starts with a space of states of nature, which specifies
parameters of a game, e.g. payoff functions.  Next it specifies
the beliefs of the agents about the space of states of nature, and
then the beliefs about the combination of the nature space with the
beliefs about the nature space and so on.   An explicit belief type
consists of a hierarchy of beliefs which satisfy the
\emph{coherence} requirement that different levels of beliefs of
every agent do not contradict one another.  In the first layer,
beliefs are represented by probability measures over the nature
space, and beliefs in the second layer are represented by
probability measures over the space of probability measures in the
first layer, and so on. Such a straightforward description provides
all possible belief types, which form an explicit model for beliefs.
However, this model is hardly a workable model considering the
complexity of the representations of the belief types in it.

 The second approach describes  beliefs \emph{implicitly},
what we use for giving formal semantics for beliefs in this paper. This approach
was introduced by Harsanyi in 1960's \cite{Har67} for games with
incomplete information played by Bayesian players. The corresponding description
is defined in a measurable space $S=\langle \Omega,
\mathcal{A}\rangle$. For each agent $i$, we associate each state of this space
  with a state of nature and a probability measure on the
space.  The agent's \emph{implicit} belief type at the state is this
probability measure, which provides a belief over the nature space.
Since each state is associated with a belief type, the probability measure also defines
beliefs of beliefs about the nature space, and so on. So we can
extract a hierarchy of beliefs (or simply an explicit description) from
this implicit description. If we ignore the association with the
nature space, namely the economic content,  the above association
$T_i$ of states to probability measures is called a \emph{type
function}  from $\Omega$ to the measure space $\Delta(S)$ of
probability measures on $S$. We call the triplet $\langle \Omega,
\mathcal{A}, T_i\rangle$ \emph{a type space}.
 Samet\cite{Sam00} demonstrated  a natural one-to-one
correspondence between such defined type spaces and the families
above of belief operators for every agent. Type spaces provide exactly the expected
``quantified Kripke structures" for beliefs.

 Within the multi-agent setting of game theory,  a special kind of type spaces called \emph{Harsanyi type spaces}
are used to describe introspection of agents. In a Harsanyi type space,
 each agent is
\emph{certain} of his degree of belief at every state of this space.  In this paper,
we take  Aumann's approach to investigate properties about Harsanyi type spaces.

 Aumann gave a syntactic approach to beliefs \cite{Aum99b}, which is an alternative to the above
semantic approach. The building blocks of his syntactic formalism are
\emph{formulas}, which are constructed from propositional letters
(which are interpreted by Aumann as ``natural occurrences", in
contrast with the nature space in Harsanyi type spaces) by the
Boolean connectives  and a family of belief operators $L^i_r$ where
$r\in \mathbb{Q}\cap [0, 1]$, where $\mathbb{Q}$ is the set of
rational numbers.  The characteristic feature
of the syntax is this family of operators. We interpret
$L^i_r\phi$ as the statement the agent $i$'s belief in the event $\phi$ is at
least $r$. $L^i_r$ is the syntactic counterpart for the agent $i$ of the semantic
belief operator $B^{\geq r}$ on $\sigma$-algebras.   We also use a derived modality $M^i_r$ which means
``at most" in our semantics and is defined as
$M^i_r\phi:=L^i_{1-r}\neg\phi$. In this paper, when we focus on
reasoning about beliefs of one agent or on reasoning about probabilities without the multi-agent setting,
 we omit
the label $i$ for simplicity.

We give a complete axiomatization of probability logic in the above language
for Harsanyi type spaces. This system is based on our work in \cite{Zhou09} about the deductive
 system $\Sigma_+$ for the general type spaces.  The most important
principle of our axiomatization $\Sigma_+$ is an infinitary Archimedean rule:
\begin{center}
    $(ARCH): \rool{\gamma\rightarrow L_s\phi(s < r)}{\gamma\rightarrow L_r\phi}{\mbox .}$
\end{center}

\noindent It reveals that such a simple rule corresponds exactly to the property (\ref{4}) and characterizes reasoning about probabilities.  Except where indicated otherwise, completeness in this paper refers to weak completeness. That is, if a formula $\phi$ is valid, then it is provable.





The system $\Sigma_H$ for
Harsanyi type spaces is the basic probability logic $\Sigma_+$ plus the following two axiom schema:
\begin{itemize}
    \item $(4_p):  L_r \phi \rightarrow L_1L_r\phi$;
    \item $(5_p): \neg L_r \phi \rightarrow L_1\neg L_r\phi$
\end{itemize}
 which capture the introspection condition in the multi-agent setting that each agent is certain of his degree of beliefs at every state.

Using the above deductive machinery for Harsanyi type spaces, we  demonstrate the relative complexity of the multi-agent interactive epistemology compared with the one-agent epistemology from the perspective of probabilistic beliefs. In order to generalize different results about knowledge in interactive epistemology to the probabilistic setting, in the following sections we restrict  the probability indices in our language for type spaces to  a finite set of rationals and the propositional letters to a finite set.
 With respect to knowledge,  Aumann \cite{Aum99a} showed  that, when there are at least two agents and only one propositional letter, the canonical state space for knowledge has the cardinality of the continuum while, for a single agent, the cardinality of the canonical space is finite. Hart, Heifetz and Samet proved the same proposition in \cite{HartHS96} by exhibiting a continuum of different \emph{consistent} lists which are constructed through a new knowledge operator called ``knowing whether."  In this paper, we obtain a similar result for probabilistic beliefs by showing that, when there are at least 2 agents,  the canonical state space for probabilistic beliefs has the cardinality of the continuum while, when there is a single agent, the cardinality of the canonical space is finite. The first part is proved by adapting a counterexample in \cite{HartHS96} through a \emph{measure-theoretical} argument.  A straightforward proof idea  is that, if the knowledge operators used in \cite{HartHS96} are replaced by certainty operators (belief operators with probability 1), then the proof in \cite{HartHS96} goes through. Note that the axiomatization for reasoning about certainty is the logic $KD45$ \cite{Halpern91}, which is different from the logic $S_5$ for knowledge.  We show that this is true and obtain a much stronger result that the proof in \cite{HartHS96} goes through iff the degrees of belief operators are strictly bigger than $\frac{1}{2}$.  The second part is shown by demonstrating both a denesting property and a unique extension theorem that a certain kind of formulas are equivalent to formulas of depth $\leq 1$ and each maximally consistent set of formulas in the above restricted finite language  has only one maximal consistent extension in this finite language extended by increasing the depth by 1. Our denesting property and unique extension theorem demonstrate an important property about the probability logic $\Sigma_H$ for Harsanyi type spaces, which is  parallel to but weaker than the well-known denesting property in $S_5$ that any formula in $S_5$ is equivalent to a formula of depth 1.   Note that \emph{even} probability logic with probability indices in a finite set of rationals is essentially different from qualitative reasoning such as multi-modal logics. For example, this restricted probability logic does not have compactness \cite{Zhou10} while most modal logics do. So our techniques in the above two parts differ from those in \cite{HartHS96,Aum99a} for knowledge.

Knowledge is closely related to probabilistic belief in interactive epistemology \cite{Aum99b, FH94}.  In this paper we investigate the relationship between these two concepts in the context of the logic $\Sigma_{HK}:=(\Sigma_H + (S_5)_K + \{H_1, H_2, H_3\})$ where $(S_5)_K$ denotes the $S_5$-axiomatization for the  knowledge operator $K$, and
\begin{itemize}
    \item $(H_1): L_r\phi \rightarrow KL_r\phi$;
    \item $(H_2): \neg L_r\phi \rightarrow K\neg L_r\phi$;
    \item $(H_3): K\phi \rightarrow L_1\phi$.

\end{itemize}

\noindent Intuitively $H_1$ combined with $H_2$ says that  the agent knows the probabilities by which he believes events; $(H_3)$ says
that if he knows something, then he is certain of it (in the sense of belief with probability 1). It is well-known in interactive epistemology \cite{Aum99b, AH02} that,  although the concept of knowledge is \emph{implicit in probabilistic beliefs} in the above semantic framework, syntactically it is a separate and exogenous notion which is non-redundant.  Indeed, the probability syntax can only express beliefs of the agents, not that an agent knows anything about another one for sure.  In this paper, we give a \emph{logical} characterization of this relationship  between  probabilistic belief and knowledge by generalizing the three notions of definability in multi-modal logics \cite{HSS09a, HSS09b} to the setting of probabilistic beliefs and knowledge to show that this relationship is equivalent to the statement that semantically $\Sigma_{HK}$ determines knowledge \emph{uniquely} while syntactically there is no formula $DK$ in $\Sigma_{HK}$ of the form $Kp \leftrightarrow \delta$, where $\delta$ is a formula that does not contain any knowledge operator.  In other words, this relationship can also be expressed  in terms of definability in multimodal logic that
 knowledge is \emph{implicitly defined by probabilistic belief}, but is not \emph{reducible to it} and hence is not \emph{explicitly definable} in terms of probabilistic belief.

  The main purpose of this paper is to develop a logical theory of knowledge and belief for multi-agent systems in a \emph{probabilistic} setting by applying  techniques from probability logic.  Our main contributions  are as follows:

 \begin{enumerate}
    \item We present an axiomatization $\Sigma_H$ for the class of Harsanyi type spaces, which is different from those in the literature in our infinitary Archimedean rule (ARCH), and we employ a probabilistic  filtration method to show its completeness. By using the same method, we show in a restricted finite language with only finitely many probability indices both a denesting property and a unique extension theorem for $\Sigma_H$, which is a probabilistic version of the denesting property for $S_5$.
    \item We introduce a type of belief operators called ``strongly believing whether", which are a generalization of ``knowing whether" operator \cite{HartHS96}, and exhibit a continuum of different consistent lists of beliefs of two agents which are constructed through these ``strongly believing whether" operators; with the above unique extension theorem for $\Sigma_H$, we demonstrate the relative complexity of multi-agent interactive epistemology from the perspective of probabilistic beliefs.
    \item We provide a logical characterization of the relationship between knowledge and probabilistic beliefs by generalizing the three notions of definability in multimodal logic to logics of probabilistic beliefs and knowledge.
 \end{enumerate}

\noindent The rest of this paper is organized as follows. In the next section, we review the syntax and semantics  of probability logics and the completeness of the basic probability logic $\Sigma_+$. In Section 3, we show the completeness of the system $\Sigma_H$ with respect to the class of Harsanyi type spaces and obtain some basic properties about $\Sigma_H$. Moreover, we use the above semantic framework and deductive machinery to compute and compare the cardinalities of the canonical state spaces with only one agent and with many agents, respectively.   In Section 4, we generalize the three notions of definability: implicit definability, reducibility and explicit definability in multimodal logic to the setting of the logic $\Sigma_{HK}$ of probabilistic beliefs and knowledge, and show that knowledge is implicitly defined in $\Sigma_{HK}$ but is not reducible to probabilistic belief and hence is not explicitly definable in $\Sigma_{HK}$.
The paper concludes with some further open problems about the relationship between knowledge  and belief in Aumann's knowledge-belief systems.

\section{Basic Probability Logic $\Sigma_+$}

In this section, we briefly review the syntax and semantics for probability logics, and the completeness of the
basic probability logic $\Sigma_+$ \cite{HM01, FH94,Zhou09}.

\subsection{Syntax and Semantics}

 The syntax of our logic is very similar to that of modal
logic.   We start with a fixed countably infinite set $AP:=\{p_1, p_2,
\cdots\}$ of propositional letters. We also use $p, q, \cdots$ to
denote propositional letters.  The set of formulas $\phi$  is built
from propositional letters as usual by connectives $\neg$, $\wedge$
and a countably infinite modalities $L_r$ for each $r\in
\mathbb{Q}\cap [0, 1]$, where $\mathbb{Q}$ is the set of rational
numbers. Equivalently, a formula $\phi$ is formed by the following
syntax:

\begin{center} $\phi:= p\mid \neg \phi\mid\phi_1\wedge \phi_2\mid
L_r\phi$ ($r\in \mathbb{Q}\cap [0, 1]$)

\end{center}

 $L_r$ is the primitive modality in our language. But we
also use a derived modality $M_r$ which means ``at most" in our
semantics through the following definition:

\begin{center} (DEF M) {    } $M_r\phi := L_{1-r}\neg \phi$.

\end{center}

 Let $\mathcal{L}$ be the formal language consisting of the
above components. We use $r, s, \alpha, \beta, \cdots$ (also with
subscripts) to denote rationals.  Next we describe the semantics of
our system. A \emph{probability model} is a tuple

\begin{center}

$M:=\langle \Omega, \mathcal{A}, T, v\rangle$

\end{center}

 \noindent where

\begin{itemize}

\item $\Omega$ is a non-empty set, which is called \emph{the
universe or the carrier set} of $M$;

\item $\mathcal{A}$ is a $\sigma$-field (or $\sigma$-algebra) of subsets of $\Omega$;

\item $T$ is a measurable mapping from $\Omega$ to the space
$\Delta(\Omega, \mathcal{A})$ of probability measures on $\Omega$,
which is endowed with the $\sigma$-field generated by the sets:

\begin{center} $\{\mu\in \Delta(\Omega, \mathcal{A}): \mu(E)\geq
r\}$ for all $E\in \mathcal{A}$ and rational $r\in [0,
1]$,

\end{center}

\item $v$ is a mapping from $AP$ to $\mathcal{A}$, i.e. $v(p)\in
\mathcal{A}$.

\end{itemize}

  $\langle \Omega, \mathcal{A}, T\rangle$ is called a
\emph{type space}, and $T$ is called a \emph{type function} on the
space.

\begin{rem}  Let $T$ be a type function.  Define $k(w,
A):=T(w)(A)$ for any $w\in \Omega$ and $A\in \mathcal{A}$.  It is
easy to check that $k$ is a Markovian kernel, i.e., it satisfies the
following two conditions:

\begin{enumerate}

    \item $k(w, \cdot)$ is a probability measure on the
    $\sigma$-algebra $\mathcal{A}$ for any $w\in \Omega$;

    \item $k(\cdot, A)$ is an $\mathcal{A}$-measurable function for
    any $A\in \mathcal{A}$.

\end{enumerate}

 Conversely, if $k$ is a Markovian kernel on $(\Omega,
\mathcal{A})$, then every function $T: \Omega\rightarrow
\Delta(\Omega, \mathcal{A})$ such that $T(s)(A):=k(s, A)$ for all
$s\in \Omega$ and $A\in \mathcal{A}$ is a type function \cite{Zhou07}.





Since $T$ can be regarded as a Markovian kernel, we sometimes also write $T(w)(A)$ as $T(w, A)$.

\end{rem}

\begin{lem} \label{MainLemma}Assume that
    \begin{itemize}
        \item $\langle \Omega, \mathcal{A}\rangle$ is a measurable space, $\mathcal{A}_0$ is an algebra on $\Omega$ and $\mathcal{A}_0$ generates $\mathcal{A}$;
        \item $T: \Omega\times \mathcal{A}\rightarrow [0, 1]$ is a function satisfying the condition: for any $w\in \Omega$, $T(w, \cdot)$ is a probability measure.
    \end{itemize}
Then the following two statements are equivalent:
    \begin{enumerate}
        \item For any $r\in [0,1]$ and $E\in \mathcal{A}_0$, $B^{\geq r}(E):=\{w: T(w, E)\geq r\}\in \mathcal{A}$;
        \item For any $r\in [0,1]$ and $E\in \mathcal{A}$, $B^{\geq r}(E):=\{w: T(w, E)\geq r\}\in \mathcal{A}$

    \end{enumerate}
\end{lem}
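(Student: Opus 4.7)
\medskip
\noindent\textbf{Proof plan.}  The implication $(2)\Rightarrow(1)$ is immediate from $\mathcal{A}_0\subseteq\mathcal{A}$, so the content of the lemma is the converse.  My plan is a standard Dynkin $\pi$--$\lambda$ argument applied to the collection
\[
  \mathcal{D}\;:=\;\bigl\{E\in\mathcal{A}:\ w\mapsto T(w,E)\text{ is }\mathcal{A}\text{-measurable}\bigr\}.
\]
The first step is to observe that membership of $E$ in $\mathcal{D}$ is equivalent to condition~(1) \emph{for this particular} $E$: since for each $r\in[0,1]$ the set $\{w:T(w,E)\ge r\}$ is exactly the preimage of $[r,1]$ under the $[0,1]$-valued function $w\mapsto T(w,E)$, and since the intervals $[r,1]$ generate the Borel $\sigma$-algebra of $[0,1]$, the condition ``$B^{\ge r}(E)\in\mathcal{A}$ for every $r\in[0,1]$'' is exactly the measurability of $w\mapsto T(w,E)$.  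Hence hypothesis~(1) says $\mathcal{A}_0\subseteq\mathcal{D}$, and conclusion~(2) says $\mathcal{A}\subseteq\mathcal{D}$.

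Next I would verify that $\mathcal{D}$ is a $\lambda$-system (Dynkin system).  Closure under complements uses that $T(w,\cdot)$ is a probability measure, so $T(w,E^{c})=1-T(w,E)$, and subtracting a measurable function from the constant $1$ preserves measurability; clearly $\Omega\in\mathcal{D}$ because $T(w,\Omega)\equiv1$.  Closure under countable disjoint unions uses $\sigma$-additivity: if $E_n\in\mathcal{D}$ are pairwise disjoint, then $T(w,\bigcup_n E_n)=\sum_n T(w,E_n)=\sup_N \sum_{n\le N}T(w,E_n)$, which is measurable as a pointwise supremum of finite sums of measurable functions.

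The third step is to observe that the algebra $\mathcal{A}_0$ is in particular a $\pi$-system (closed under finite intersections).  Since $\mathcal{A}_0\subseteq\mathcal{D}$ by hypothesis~(1) and $\mathcal{D}$ is a $\lambda$-system, Dynkin's $\pi$--$\lambda$ theorem yields $\sigma(\mathcal{A}_0)\subseteq\mathcal{D}$.  As $\mathcal{A}_0$ generates $\mathcal{A}$ we get $\mathcal{A}\subseteq\mathcal{D}$, which is (2).

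The only delicate point --- and the one I would write carefully --- is the verification of $\lambda$-closure, specifically the passage from finite additivity to the $\sigma$-additive identity $T(w,\bigcup_n E_n)=\sum_n T(w,E_n)$; here one genuinely uses that $T(w,\cdot)$ is a probability \emph{measure} and not merely a finitely additive set function.  Everything else is bookkeeping.
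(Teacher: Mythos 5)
Your proposal is correct, and it has the same overall skeleton as the paper's argument (a ``good sets'' principle: collect the events $E$ for which the conclusion holds, show this collection contains $\mathcal{A}_0$ and is closed under the right limit operations, then invoke a generation theorem), but the tool you invoke and the closure properties you verify are genuinely different. The paper introduces the auxiliary $\sigma$-algebras $\mathcal{F}_0$ and $\mathcal{F}$ generated by the sets $B^{\geq r}(E)$ for $E$ ranging over $\mathcal{A}_0$ and over $\mathcal{A}$ respectively, reduces the lemma to $\mathcal{F}=\mathcal{F}_0$, and applies Halmos's monotone class theorem to $\mathcal{A}'=\{E\in\mathcal{A}: B^{\geq r}(E)\in\mathcal{F}_0 \text{ for all } r\}$, checking closure under monotone increasing and decreasing limits via continuity of the probability measures $T(w,\cdot)$ along monotone sequences. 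You instead reformulate condition (1) for a fixed $E$ as measurability of the function $w\mapsto T(w,E)$ (correctly, since the intervals $[r,1]$ generate the Borel sets of $[0,1]$), and apply Dynkin's $\pi$--$\lambda$ theorem, checking closure under complements (via $T(w,E^c)=1-T(w,E)$) and countable disjoint unions (via $\sigma$-additivity and pointwise limits of measurable functions). Both routes are standard and correct; yours is somewhat more streamlined in that it dispenses with the auxiliary $\sigma$-algebras $\mathcal{F}_0,\mathcal{F}$ and works directly with measurability, at the cost of relying on the equivalence between ``$B^{\geq r}(E)\in\mathcal{A}$ for all $r$'' and measurability of $w\mapsto T(w,E)$, which you do justify. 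The only point worth writing out in full, as you note, is the disjoint-union step; the paper's monotone-limit computations play exactly the analogous role there.
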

The proof of this lemma is relegated to the Appendix.

 For a fixed model $M$, there exists a unique
satisfaction relation $\models$ between the state $w$ of $M$ and
formulas $\phi$ that satisfies the following clauses;
moreover, the associated interpretation $[[\phi]] = \{w\in M: M,
w\models \phi\}$ is a measurable set for all formulas $\phi$:

\begin{itemize}

\item $M, w\models p$ iff $w\in v(p)$ for propositional letters $p$;

\item $M, w\models \phi_1\wedge \phi_2$ iff $M, w\models \phi_1$ and
$M, w\models \phi_2$;

\item $M, w\models \neg \phi$ iff $M, w\not\models \phi$;

\item $M, w\models L_r\phi$ iff $T(w)([[\phi]])\geq r$, where $[[\phi]]:=\{w\in \Omega: M, w\models
\phi\}$.

\end{itemize}




\noindent A formula  $\phi$ is \emph{valid} in the probability model $M$ if $M\models
\phi$, i.e., for all states $w\in M$, $M, w\models \phi$. $\phi$ is
\emph{valid in a class of probability models} $\mathcal{C}$ if, for
each $M\in \mathcal{C}$, $M\models \phi$. $\phi$ is \emph{valid in a
class $\mathcal{T}$ of type spaces} if $\phi$ is valid in all the
probability models defined on $\mathcal{T}$.

\subsection{Completeness}

In this section, we give an axiomatization of the basic probability logic which is complete
 with respect to the class of type spaces and briefly review the proof of its completeness \cite{Zhou07, Zhou09}. Our following
proof of the completeness of probability logic for Harsanyi type spaces is based on this basic one. Without further notice, all the probability indices such as $r,s,t$ in this paper are assumed to be between 0 and 1.

\begin{center}

\textbf{Probability Logic $\Sigma_+$}

\begin{itemize}

\item (A0) propositional calculus

\item (A1) $L_0\phi$

\item (A2) $L_r \top$

\item (A3) $L_r(\phi\wedge \psi) \wedge L_t(\phi\wedge \neg \psi)
\rightarrow L_{r+t} \phi$,  for $r+t\leq 1$

\item (A4) $\neg L_r(\phi\wedge \psi) \wedge \neg L_s(\phi\wedge\neg
\psi)\rightarrow \neg L_{r+s}\phi,$ for $r+s\leq 1$

\item (A5) $L_r\phi\rightarrow \neg L_s \neg \phi,$ for $ r+s > 1$

\item (DIS) If $\vdash \phi\leftrightarrow \psi, \vdash L_r\phi\leftrightarrow
L_r\psi$.

\item (ARCH): If $\vdash \gamma\rightarrow L_s \phi$ for all $s < r$,
then $\vdash \gamma\rightarrow L_r\phi$.

\end{itemize}

\end{center}

 A \emph{logic} $\Lambda$ in $\mathcal{L}$ is a set of formulas in $\mathcal{L}$ that contains all propositional tautologies, and is closed under modus ponens and uniform
substitution (that is, if $\phi$ belongs to $\Lambda$, then so do
all of its substitution instances). For a formula $\phi$, $\vdash_{\Lambda} \phi$ denotes $\phi\in \Lambda$.  When the context is clear, we omit the subscript $\Lambda$.
 \emph{Probability logic $\Sigma_+$} is the smallest logic
that contains
($A1$-$A5$), and is closed under (DIS) and (ARCH). Observe that the rule (ARCH) is
the only rule that is really about the indices of the modalities.
Since the index set $\mathbb{Q}\cap [0,1]$ has  the
\emph{Archimedean property}, i.e., the property of having no
infinitely small elements, our following logics are outside nonstandard analysis and the fact that the rule has infinitely
many premises seems unavoidable.

\begin{defi} A finite set $\Theta =\{\phi_1, \cdots, \phi_k\}$
of formulas
 is \emph{inconsistent in a logic $\Lambda$} (or $\Lambda$-inconsistent) if
$\bigwedge_{i=1}^k \phi_i \rightarrow \bot \in \Lambda$, and is
\emph{consistent in $\Lambda$} (or $\Lambda$-consistent) otherwise.  A
(finite or infinite) set $\Xi$ of formulas is \emph{consistent in $\Lambda$} if each finite subset of $\Xi$ is consistent in $\Lambda$. $\Xi$ is
\emph{maximal consistent in $\Lambda$} if it is consistent in $\Lambda$ and
any set of formulas properly containing $\Xi$ is not.  $\Sigma_+$ is \emph{(weakly) complete} with respect to
the class of type spaces if any $\Sigma_+$-consistent formula is satisfiable in
a probability model.
\end{defi}

In the following sections, without further notice, we will omit the logic $\Lambda$ in $\Lambda$-consistency or $\Lambda$-inconsistency. The context will determine which logic we mean.
Note that consistency in the above definition is usually called finite consistency in probability logic.  Actually, in probability logic, finite consistency
is different from consistency \cite{Gol07, Zhou10}. But, since we won't discuss anything related to consistency in probability logic, there is no need to distinguish between these two notions. For simplicity, we call finite consistency in probability logic consistency as in the above definition.

\begin{lem}\label{Lindenbaum1} (Lindenbaum Lemma)

    If $\Xi$ is a consistent set of formulas, then there is a
maximal consistent $\Xi'$ such that $\Xi \subseteq \Xi'$.
\end{lem}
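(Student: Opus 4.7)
The plan is to mimic the standard Lindenbaum construction from classical modal logic, exploiting the fact that consistency as defined in the excerpt is a purely finitary property: it says only that no finite conjunction of members of $\Xi$ entails $\bot$ \emph{within the fixed set} $\Lambda$. Since the set $AP$ of propositional letters is countable and the index set $\mathbb{Q}\cap[0,1]$ is countable, the whole language $\mathcal{L}$ is countable, so I can enumerate its formulas as $\phi_0,\phi_1,\phi_2,\ldots$.

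First I would build a chain $\Xi_0\subseteq \Xi_1\subseteq\cdots$ by setting $\Xi_0:=\Xi$ and
$$
\Xi_{n+1}\;:=\;\begin{cases}\Xi_n\cup\{\phi_n\} & \text{if }\Xi_n\cup\{\phi_n\}\text{ is consistent,}\\ \Xi_n & \text{otherwise.}\end{cases}
$$
By construction each $\Xi_n$ is consistent, using the consistency of $\Xi_0=\Xi$ as the base case. Put $\Xi':=\bigcup_{n\in\mathbb{N}}\Xi_n$.

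Next I would verify that $\Xi'$ is consistent. If some finite subset $F\subseteq\Xi'$ were inconsistent, i.e.\ $\bigwedge F\rightarrow\bot\in\Lambda$, then the finitely many elements of $F$ would all appear in a common $\Xi_N$ for $N$ the maximum of their stage indices, contradicting the consistency of $\Xi_N$. For maximality, suppose $\phi\notin\Xi'$; since $\phi=\phi_n$ for some unique $n$, the rejection step at stage $n+1$ means $\Xi_n\cup\{\phi_n\}$ was inconsistent, so this same finite witness sits inside $\Xi'\cup\{\phi\}$, showing that no proper extension of $\Xi'$ is consistent.

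The only subtlety worth flagging is that the presence of the infinitary rule (ARCH) does not disrupt the argument, and I would be careful to say why. The set $\Lambda$ is fixed once and for all as the \emph{smallest} logic closed under (DIS) and (ARCH); it is not redefined step by step along the chain. Consistency of $\Xi_n$ is then simply a membership question about a certain implication in this fixed $\Lambda$, so no new infinitary derivations arise as we extend. This is precisely the reason the paper already signals the distinction between finite consistency (used here) and the stronger proof-theoretic notion studied in \cite{Gol07,Zhou10}: the Lindenbaum lemma concerns only the former, and so it carries over from classical modal logic with no real obstacle beyond enumerating a countable language.
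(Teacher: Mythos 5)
Your proposal is correct and is essentially the paper's own argument: the paper simply defers to the standard Lindenbaum construction of modal logic (Lemma 4.17 in the cited textbook), which is exactly the enumeration-and-chain argument you carry out, and your observation that only \emph{finite} consistency is at stake—so the infinitary rule (ARCH) causes no trouble because $\Lambda$ is a fixed set and consistency is a membership question about it—matches the paper's own remark distinguishing finite consistency from the stronger notion. Nothing further is needed.
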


\begin{proof} Since our definition of consistency is the same as
that of consistency in modal logic \cite{BRV00}, so is the proof of
the Lindenbaum Lemma. The crucial step is the fact that  for any
$\Xi$ and $\phi$, if $\Xi$ is consistent, then one of
$\Xi\cup \{ \phi\}$ and $\Xi \cup\{\neg\phi\}$ is
consistent. For details, the reader may refer to Lemma 4.17
in \cite{BRV00}.
\end{proof}

Before we present the completeness, we will present some basic theorems
of $\Sigma_+$ which we will need in the following sections.  The reader can also read Section 3 in \cite{HM01}, Section 3 in \cite{Zhou09} and Chapter 2 in \cite{Zhou07} for detailed proofs.


\begin{prop}\label{basictheorem1} The following two principles are provable in $\Sigma_+$:

\begin{enumerate}

\item $\neg L_r \bot$ if $r > 0$;

\item $\neg M_r\top$ if $r<1$, which is just dual to the first part.

\end{enumerate}

\end{prop}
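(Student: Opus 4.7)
The plan is to derive both parts directly from the stated axioms, with part 2 following from part 1 by unfolding the definition of $M_r$. The proposition says nothing deep; it just records that positive degree-of-belief in the impossible event (and maximum at-most-quantification in the certain event) is inconsistent, and this should fall out by combining (A2) with (A5) once we translate between $\neg\top$ and $\bot$.

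For part 1, I would instantiate axiom (A5) at $\phi:=\top$ and with ``$r$'' taken to be $1$: (A5) then yields $L_1\top \rightarrow \neg L_s\neg\top$ for any $s$ with $1+s>1$, i.e.\ for any $s>0$. Axiom (A2) gives $\vdash L_1\top$, so detaching via modus ponens produces $\vdash \neg L_s\neg\top$ for all $s>0$. Now I would invoke (DIS): since $\vdash \neg\top \leftrightarrow \bot$ by propositional calculus (A0), (DIS) yields $\vdash L_s\neg\top \leftrightarrow L_s\bot$. Contraposition and propositional reasoning then deliver $\vdash \neg L_s\bot$ for every $s>0$, which is exactly part 1 (renaming $s$ to $r$).

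For part 2, I would simply unfold: by the definition (DEF M), $M_r\top = L_{1-r}\neg\top$. Another application of (DIS) with the tautology $\neg\top\leftrightarrow\bot$ shows $\vdash M_r\top \leftrightarrow L_{1-r}\bot$. Since $r<1$ implies $1-r>0$, part 1 gives $\vdash \neg L_{1-r}\bot$, and hence $\vdash \neg M_r\top$, as required.

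There is no real obstacle here; the only micro-subtlety is being careful that (DIS) applies to the propositional equivalence $\neg\top\leftrightarrow\bot$ (which is immediate from (A0)) so that one may freely substitute $\bot$ for $\neg\top$ under the modality $L_s$. Note that the rule (ARCH) is not needed for this basic fact; the derivation uses only (A0), (A2), (A5), and (DIS), together with the defining abbreviation of $M_r$.
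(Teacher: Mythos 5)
Your derivation is correct: (A5) instantiated at $\phi:=\top$ with first index $1$ gives $L_1\top\rightarrow\neg L_s\neg\top$ for $s>0$, (A2) discharges the antecedent, and (DIS) applied to the tautology $\neg\top\leftrightarrow\bot$ converts $\neg L_s\neg\top$ into $\neg L_s\bot$; part~2 then follows by unfolding (DEF M) exactly as you say. The paper itself omits the proof of this proposition (deferring to the cited references), but your argument is the standard one and uses precisely the intended axioms, so there is nothing to fault.
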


\begin{prop}\label{basictheorem2} The following principles are provable in  $\Sigma_+$:

 \begin{enumerate}

 \item If $ \phi\rightarrow \psi$, then $ L_r\phi\rightarrow
 L_r\psi$;

 \item $L_r\phi \rightarrow L_s\phi$ if $r\geq s$;


 \item $\neg L_r\phi \rightarrow M_r\phi$;

 \end{enumerate}

\end{prop}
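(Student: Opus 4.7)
The plan is to derive each clause from axioms (A1)--(A5), the rule (DIS), and the fact $\neg L_r\bot$ for $r>0$ from Proposition~\ref{basictheorem1}. All three are routine $\Sigma_+$-derivations whose only real content is choosing the right instance of (A3) or (A4) and then eliminating an auxiliary $L$-formula via (A1) or (A2).

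For clause (1), I would start from the propositional tautology $\phi \leftrightarrow (\phi \wedge \psi)$ that follows from $\vdash \phi \to \psi$, and apply (DIS) together with the obvious equivalence $(\phi\wedge\psi)\leftrightarrow(\psi\wedge\phi)$ to conclude $\vdash L_r\phi \leftrightarrow L_r(\psi\wedge\phi)$. Instantiating (A3) with $\psi$ in place of $\phi$, $\phi$ in place of $\psi$, and $t=0$ gives
\[
L_r(\psi\wedge\phi)\wedge L_0(\psi\wedge\neg\phi)\rightarrow L_r\psi.
\]
Since (A1) supplies the second conjunct, we obtain $L_r(\psi\wedge\phi)\to L_r\psi$, which together with the previous equivalence yields $L_r\phi\to L_r\psi$.

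For clause (2), it suffices to treat the case $r>s$; write $r = s + t$ with $t>0$ and $r\leq 1$. Take the contrapositive reading of (A4) with $\psi := \top$: because $(\phi\wedge\top)\leftrightarrow\phi$ and $(\phi\wedge\neg\top)\leftrightarrow\bot$, (DIS) lets us rewrite (A4) as
\[
L_{s+t}\phi\rightarrow L_s\phi \vee L_t\bot.
\]
By Proposition~\ref{basictheorem1}(1), $\vdash\neg L_t\bot$ since $t>0$, so the second disjunct is refuted and $L_r\phi\to L_s\phi$ follows. For clause (3), I would apply (A4) once more, this time with the roles reversed, instantiating its $\phi$ by $\top$ and its $\psi$ by $\phi$: using $(\top\wedge\phi)\leftrightarrow\phi$ and $(\top\wedge\neg\phi)\leftrightarrow\neg\phi$ via (DIS), the contrapositive of (A4) becomes
\[
L_{r+s}\top \rightarrow L_r\phi \vee L_s\neg\phi.
\]
The hypothesis is guaranteed by (A2), so $\neg L_r\phi \to L_s\neg\phi$ for every $s$ with $r+s\leq 1$. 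Choosing $s = 1-r$ gives $\neg L_r\phi \to L_{1-r}\neg\phi$, which is precisely $\neg L_r\phi \to M_r\phi$ by (DEF M). The only potential pitfall in the whole argument is keeping the side condition $r+t\leq 1$ of (A3)/(A4) satisfied, but in each case the indices we use lie in $[0,1]$ by construction, so this is automatic.
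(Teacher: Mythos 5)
Your derivations are correct: each instance of (A3)/(A4) is legitimately instantiated, the side conditions $r+t\leq 1$ are respected, the auxiliary conjuncts are discharged by (A1), (A2), and $\neg L_t\bot$ (which itself depends only on (A2), (A5), (DIS), so no circularity), and (DIS) is applied only to genuine propositional equivalences. The paper does not spell out a proof of this proposition but defers to the cited references, and your argument is essentially the standard derivation found there, so there is nothing to compare beyond noting agreement.
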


\begin{prop} \label{basictheorem3}The following principles follow immediately from the above theorem:

 \begin{enumerate}

 \item If $\phi\rightarrow \psi$, then $M_r\psi\rightarrow
 M_r\phi$;

 \item $M_r\phi \rightarrow M_s\phi$ if $r\leq s$;


 \item $\neg M_r\phi \rightarrow L_r\phi$;

 \end{enumerate}

\end{prop}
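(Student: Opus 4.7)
The plan is to derive each part by dualizing the corresponding part of Proposition \ref{basictheorem2} through the definition $M_r\phi := L_{1-r}\neg\phi$. In each case, the strategy is to unfold $M_r$ into its $L$-form, apply the relevant clause of Proposition \ref{basictheorem2}, and then fold back using (DEF M) together with (DIS) where double negations need to be cleaned up.

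For part (1), assume $\phi \rightarrow \psi$ is provable. Then by propositional calculus (A0), $\neg \psi \rightarrow \neg \phi$ is also provable. Applying Proposition \ref{basictheorem2}(1) with $\neg\psi$ in place of $\phi$ and $\neg\phi$ in place of $\psi$ yields $L_{1-r}\neg\psi \rightarrow L_{1-r}\neg\phi$, which by (DEF M) is just $M_r\psi \rightarrow M_r\phi$. For part (2), assume $r \leq s$, so that $1-r \geq 1-s$. Proposition \ref{basictheorem2}(2) applied to the formula $\neg\phi$ gives $L_{1-r}\neg\phi \rightarrow L_{1-s}\neg\phi$, i.e.\ $M_r\phi \rightarrow M_s\phi$.

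For part (3), unfold the antecedent via (DEF M): $\neg M_r\phi$ is $\neg L_{1-r}\neg\phi$. By Proposition \ref{basictheorem2}(3) applied to $\neg\phi$, we obtain $\neg L_{1-r}\neg\phi \rightarrow M_{1-r}\neg\phi$; and $M_{1-r}\neg\phi$, by (DEF M), is $L_{r}\neg\neg\phi$. A propositional tautology gives $\neg\neg\phi \leftrightarrow \phi$, so by (DIS) we have $L_r\neg\neg\phi \leftrightarrow L_r\phi$, and stringing these implications together yields $\neg M_r\phi \rightarrow L_r\phi$.

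There is no real obstacle here; every step is an instance of Proposition \ref{basictheorem2} transported across the $L/M$-duality. The only point requiring mild care is the double negation appearing in part (3), which is absorbed cleanly by a single application of (DIS) to the tautology $\neg\neg\phi \leftrightarrow \phi$. This is precisely the sense in which the proposition ``follows immediately'' from its predecessor.
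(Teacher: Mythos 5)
Your proof is correct and matches the paper's intent: the paper gives no explicit proof, asserting only that these principles ``follow immediately'' from Proposition \ref{basictheorem2}, and your dualization via (DEF M), with (DIS) absorbing the double negation in part (3), is exactly the argument being alluded to.
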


\begin{prop}\label{additivity} The following principles hold:

\begin{enumerate}

\item  If $\vdash \neg (\phi\wedge \psi)$, then $\vdash L_r\phi\wedge
L_s\psi\rightarrow L_{r+s}(\phi\vee \psi),$ for $ r+s\leq 1$;

\item If $\vdash \neg(\phi\wedge \psi)$, then $\vdash \neg L_r\phi\wedge
\neg L_s\psi\rightarrow \neg L_{r+s}(\phi\vee \psi),$ for $ r+s\leq
1$.

\item  If $\vdash \neg (\phi\wedge \psi)$, then $\vdash M_r\phi\wedge
M_s\psi\rightarrow M_{r+s}(\phi\vee \psi),$ for $ r+s\leq 1$;

\item If $\vdash \neg(\phi\wedge \psi)$, then $\vdash \neg M_r\phi\wedge
\neg M_s\psi\rightarrow \neg M_{r+s}(\phi\vee \psi),$ for $ r+s\leq
1$.

\end{enumerate}

\end{prop}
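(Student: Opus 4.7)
The four claims are additivity principles for disjoint events, paralleling (A3)--(A4). My plan is to dispatch (1), (2), and (4) by direct axiom application plus a short case split, and to reserve the infinitary rule (ARCH) for (3), which is the main obstacle.

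For (1) and (2) I would instantiate (A3) (resp.\ (A4)) with $X = \phi \vee \psi$ and $Y = \phi$. The hypothesis $\vdash \neg(\phi \wedge \psi)$ yields, by propositional calculus, $\vdash (\phi \vee \psi) \wedge \phi \leftrightarrow \phi$ and $\vdash (\phi \vee \psi) \wedge \neg\phi \leftrightarrow \psi$; rewriting the two conjuncts of the instantiated premise via (DIS) produces exactly $L_r \phi \wedge L_s \psi \to L_{r+s}(\phi \vee \psi)$ for (1), and its $\neg L$-analogue for (2). For (4) I would prove the contrapositive $L_{1-r-s}(\neg\phi \wedge \neg\psi) \to L_{1-r}\neg\phi \vee L_{1-s}\neg\psi$: instantiating (A3) with $X = \neg\phi$, $Y = \psi$ and using $\vdash \neg\phi \wedge \psi \leftrightarrow \psi$ together with (DIS) gives $L_s \psi \wedge L_{1-r-s}(\neg\phi \wedge \neg\psi) \to L_{1-r}\neg\phi$; a propositional case split on $L_s \psi$ then finishes, using Proposition \ref{basictheorem2}(3) to convert $\neg L_s \psi$ into $L_{1-s}\neg\psi = M_s\psi$.

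The genuine difficulty lies in (3): the premise $L_{1-r}\neg\phi \wedge L_{1-s}\neg\psi$ is non-strict, whereas the analogous argument through (A4) needs strict inequalities, which (A5) supplies only for indices strictly above $r$ and $s$. My plan is to invoke (ARCH) on $\Gamma := L_{1-r}\neg\phi \wedge L_{1-s}\neg\psi$ to derive $L_{1-r-s}(\neg\phi \wedge \neg\psi)$, by showing $\Gamma \vdash L_t(\neg\phi \wedge \neg\psi)$ for every rational $t \in [0, 1-r-s)$. For each such $t$ I would pick rationals $t' > r$ and $u' > s$ with $t' + u' = 1-t$ (possible since $1-t > r+s$ and the rationals are dense); (A5) converts $\Gamma$ to $\neg L_{t'}\phi \wedge \neg L_{u'}\psi$, and (2), already established, yields $\neg L_{1-t}(\phi \vee \psi)$. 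Instantiating (A4) with $X = \top$, $Y = \phi \vee \psi$ at the split $(1-t, t)$ gives $\neg L_{1-t}(\phi \vee \psi) \wedge \neg L_t(\neg\phi \wedge \neg\psi) \to \neg L_1 \top$, which contradicts (A2); hence $L_t(\neg\phi \wedge \neg\psi)$. Applying (ARCH) then closes the Archimedean gap at $t = 1-r-s$ and delivers $L_{1-r-s}(\neg\phi \wedge \neg\psi) = M_{r+s}(\phi \vee \psi)$. This strict-versus-non-strict gap is precisely why (3) needs the infinitary rule while (1), (2), and (4) do not.
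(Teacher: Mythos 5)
Your proposal is correct. Note that the paper itself does not prove Proposition~\ref{additivity} in the text --- it defers to \cite{HM01}, \cite{Zhou09} and \cite{Zhou07} --- so there is no in-paper argument to compare against line by line; but your derivation is the natural one for this axiomatization and, as far as I can tell, matches the standard route in those references. Parts (1) and (2) are exactly the instances of (A3) and (A4) at $\phi\vee\psi$ split along $\phi$, rewritten by (DIS) using $\vdash(\phi\vee\psi)\wedge\neg\phi\leftrightarrow\psi$ (which is where the disjointness hypothesis enters), and your case split for (4) via Proposition~\ref{basictheorem2}(3) is fine. Your diagnosis of (3) is also the right one: (A5) only converts $L_{1-r}\neg\phi$ into $\neg L_{t'}\phi$ for $t'$ \emph{strictly} above $r$, so the direct argument only yields $L_t(\neg\phi\wedge\neg\psi)$ for every rational $t<1-r-s$, and (ARCH) is exactly what closes the gap at $t=1-r-s$ --- consistent with the paper's remark that (ARCH) is the syntactic counterpart of the continuity-in-indices property~(\ref{4}). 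The only redundancy is that your final step through (A4) with $X=\top$ and (A2) just re-derives $\neg L_{1-t}(\phi\vee\psi)\rightarrow M_{1-t}(\phi\vee\psi)$, which is Proposition~\ref{basictheorem2}(3) plus (DIS) and could simply be cited; all index constraints ($t'+u'=1-t\leq 1$, $t',u'\in[0,1]\cap\mathbb{Q}$, $(1-r)+t'>1$) check out.
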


\begin{cor} \label{PlusOne}
\begin{enumerate}
 \item $\vdash L_1\phi \wedge L_r\psi \rightarrow L_r(\phi \wedge \psi)$, $\vdash L_1\phi \wedge \neg L_r\psi \rightarrow \neg L_r(\phi \wedge \psi)$;
 \item $\vdash L_1\phi \wedge M_r\psi \rightarrow M_r(\phi \wedge \psi)$, $\vdash L_1\phi \wedge \neg M_r\psi \rightarrow \neg M_r(\phi \wedge \psi)$;
 \item $\vdash M_0\phi \wedge M_r\psi \rightarrow M_r(\phi \vee \psi)$, $\vdash M_0\phi \wedge \neg M_r\psi \rightarrow \neg M_r(\phi \vee \psi)$;
 \item $\vdash M_0\phi \wedge L_r\psi \rightarrow L_r(\phi \vee \psi)$, $\vdash M_0\phi \wedge \neg L_r\psi \rightarrow \neg L_r(\phi \vee \psi)$.
\end{enumerate}
\end{cor}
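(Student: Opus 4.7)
My plan is to split each of the four items into its easy and hard halves and handle each type uniformly. In each pair exactly one implication is pure monotonicity and does not use the extra conjunct $L_1\phi$ or $M_0\phi$: specifically, the second halves of items (1) and (3), and the first halves of items (2) and (4), all follow directly from Propositions \ref{basictheorem2}(1) and \ref{basictheorem3}(1) applied to the propositional inclusions $\phi\wedge\psi\rightarrow\psi$ and $\psi\rightarrow\phi\vee\psi$. For example, $L_r(\phi\wedge\psi)\rightarrow L_r\psi$ is immediate, and its contrapositive is the second half of (1); the other three easy halves are similar.

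For the four nontrivial implications I first record a preliminary lemma used throughout: $L_1\phi\vdash M_0(\neg\phi\wedge\chi)$ for every formula $\chi$. Indeed $\phi\rightarrow\phi\vee\neg\chi$ together with Proposition \ref{basictheorem2}(1) yields $L_1\phi\rightarrow L_1(\phi\vee\neg\chi)$, which by the definition of $M$ equals $M_0(\neg\phi\wedge\chi)$. Dually, $M_0\phi=L_1\neg\phi$ combined with axiom (A5) gives $M_0\phi\vdash\neg L_s\phi$ for every rational $s>0$.

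The second half of (2) and the first half of (3) are pure additivity arguments built on Proposition \ref{additivity}(3). For the second half of (2), which by contraposition is equivalent to $L_1\phi\wedge M_r(\phi\wedge\psi)\rightarrow M_r\psi$, I decompose $\psi$ as the propositionally disjoint disjunction $(\phi\wedge\psi)\vee(\neg\phi\wedge\psi)$, extract $M_0(\neg\phi\wedge\psi)$ from $L_1\phi$ via the preliminary lemma, and apply Proposition \ref{additivity}(3) with indices $r$ and $0$ to conclude $M_r((\phi\wedge\psi)\vee(\neg\phi\wedge\psi))$; (DIS) then rewrites this as $M_r\psi$. The argument for the first half of (3) is symmetric: decompose $\phi\vee\psi$ as $\phi\vee(\neg\phi\wedge\psi)$, use monotonicity of $M$ to pass from $M_r\psi$ to $M_r(\neg\phi\wedge\psi)$, and combine with $M_0\phi$ through Proposition \ref{additivity}(3) with indices $0$ and $r$.

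The genuine obstacle is the first half of (1) and the second half of (4), which each require subtracting a probability-zero event from an $L$-inequality. No finite axiom licenses this subtraction in a single step, so (ARCH) is essential. I illustrate with the first half of (1), namely $L_1\phi\wedge L_r\psi\rightarrow L_r(\phi\wedge\psi)$. Instantiating (A4) by substituting $\psi$ for its $\phi$ and $\phi$ for its $\psi$ yields $\neg L_a(\psi\wedge\phi)\wedge\neg L_b(\psi\wedge\neg\phi)\rightarrow\neg L_{a+b}\psi$ for $a+b\leq 1$. Taking the contrapositive and specializing to $a=r-s$ and $b=s$ for any rational $s$ with $0<s\leq r$, I obtain $L_r\psi\rightarrow L_{r-s}(\phi\wedge\psi)\vee L_s(\neg\phi\wedge\psi)$. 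The preliminary lemma yields $\neg L_s(\neg\phi\wedge\psi)$ from $L_1\phi$, eliminating the second disjunct and leaving $L_1\phi\wedge L_r\psi\vdash L_{r-s}(\phi\wedge\psi)$ for every such $s$. As $s$ ranges over $(0,r]\cap\mathbb{Q}$ the index $r-s$ sweeps through all rationals in $[0,r)$, so (ARCH) promotes the whole family of these conclusions to $L_1\phi\wedge L_r\psi\vdash L_r(\phi\wedge\psi)$. The second half of (4) is entirely parallel: apply (A4) to the pair $\phi$ and $\neg\phi\wedge\psi$, which partitions $\phi\vee\psi$, use $\neg L_s\phi$ coming from $M_0\phi$ to eliminate the $L_s\phi$ disjunct, and close with (ARCH).
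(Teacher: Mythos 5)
Your eight derivations are all sound, and for six of them (the four monotonicity halves, the second half of (2), and the first half of (3)) you argue exactly as the paper does: the paper proves only the first half of (3) by weakening $M_0\phi$ to $M_0(\phi\wedge\neg\psi)$ and applying Proposition~\ref{additivity}(3) to the disjoint decomposition $(\phi\wedge\neg\psi)\vee\psi$ of $\phi\vee\psi$, leaving the rest as ``similar.'' Where you genuinely diverge is in the first half of (1) and the second half of (4), which you call ``the genuine obstacle'' and attack with the contrapositive of (A4) plus an (ARCH) limit argument. That derivation is valid, but your claim that (ARCH) is \emph{essential} here is false: item (1) is the exact dual of item (3) under $\phi\mapsto\neg\phi$, $\psi\mapsto\neg\psi$, $r\mapsto 1-r$, and likewise (4) is dual to (2). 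Concretely, $L_1\phi\wedge L_r\psi$ is (by (DEF M) and (DIS)) the same formula as $M_0\neg\phi\wedge M_{1-r}\neg\psi$, and $L_r(\phi\wedge\psi)$ is $M_{1-r}(\neg\phi\vee\neg\psi)$, so the first half of (1) is literally an instance of the first half of (3); alternatively, run your own additivity computation directly: from $L_1\phi$ get $M_0(\neg\phi)$, weaken $L_r\psi=M_{1-r}\neg\psi$ to $M_{1-r}(\phi\wedge\neg\psi)$ by Proposition~\ref{basictheorem3}(1), and Proposition~\ref{additivity}(3) on the disjoint pair $\neg\phi$, $\phi\wedge\neg\psi$ yields $M_{1-r}(\neg\phi\vee\neg\psi)=L_r(\phi\wedge\psi)$. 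The same dualization disposes of the second half of (4). So the corollary lives entirely at the level of finite additivity; your route buys nothing except a demonstration that (A4)+(A5)+(ARCH) can simulate the subtraction, at the cost of invoking an infinitary rule where the paper's uniform finite argument suffices. I would revise the sentence asserting that ``no finite axiom licenses this subtraction.''
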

\proof  Here we choose Case 3 as an illustration:
\begin{eqnarray}
  M_0\phi \wedge M_r\psi & \rightarrow & M_0(\phi\wedge \neg \psi) \wedge M_r\psi \qquad \text{(Proposition \ref{basictheorem3})}\nonumber\\
 & \rightarrow & M_r(\phi \vee \psi)\qquad \text{(Proposition \ref{additivity})} \nonumber
\end{eqnarray}\vspace{-30 pt}

\qed

\begin{defi}  The \emph{depth} $dp(\phi)$ of a formula $\phi$ is defined
inductively:

\begin{itemize}

\item $dp(p):= 0$ for propositional letters $p$;

\item $dp(\neg \phi):= dp(\phi)$;

\item $dp(\phi_1\wedge \phi_2):= max\{dp(\phi_1), dp(\phi_2)\}$;

\item $dp(L_r \phi):=dp(\phi)+1$.

\end{itemize}

\end{defi}

 Now we define a local language $\mathcal{L}(q, d, P)$
 to be the largest set of formulas satisfying the following conditions:

\begin{itemize}

\item The propositional letters in $\mathcal{L}(q, d, P)$ are those occurring in $P$;

\item  The indices of formulas in $\mathcal{L}(q, d, P)$ are multiples
of $1/q$;

\item  The formulas in $\mathcal{L}(q, d, P)$ are of depth $\leq
d$;

\item Logical equivalent formulas are identified.

\end{itemize}

  The above $q$ is called the \emph{accuracy} of the
language $\mathcal{L}(q,d, P)$ ($q\in \mathbb{N}$).  In particular, $\mathcal{L}[\psi]$
is defined as $\mathcal{L}(q_{\psi}, d_{\psi}, P_{\psi})$ where
$P_{\psi}$ is the set of all propositional letters in $\psi$,
$q_{\psi}$ is the accuracy of $\psi$ (the least common multiple of all
denominators of the indices appearing in $\psi$) and $d_{\psi}$ the depth of
$\psi$. $I[\psi]$ is the finite set of all rationals in the form of
$p/q \in [0, 1]$  where $q$ is the accuracy of $\psi$; and it is called
the \emph{index set} of the language $\mathcal{L}[\psi]$. Note that
$\mathcal{L}[\psi]$ is finite. In general, let $\mathcal{L}(q,
d, P)$ be the set of formulas
 containing only propositional letters in $P$ having accuracy $q$ and depth at most $d$ modulo
 logical equivalence. From propositional reasoning, we know that
 each formula in
 $\mathcal{L}(q, d, P)$ is \emph{logically equivalent} to a finite disjunction
 consisting only of non-equivalent disjuncts, each of the disjuncts
 being a conjunction consisting only of non-equivalent conjuncts,
 each conjunct being either itself in $\mathcal{L}(q, d-1, P)$ or
 being obtainable from some formula in $\mathcal{L}(q, d-1, P)$ by
 prefixing it either with a modality $L_r$ or $\neg L_r$ where $r$
 is a multiple of $1/q$.  By induction on the depth $d$, we can show
 that $\mathcal{L}(q, d, P)$ is finite and hence $\mathcal{L}[\psi]$
 is finite. In the following, we won't distinguish between the equivalence
classes in $\mathcal{L}(q, d, P)$ and their representatives.

Assume that the set of propositional letters in the language $\mathcal{L}$ is implicitly enumerated as: $p_1, p_2, \cdots$.  If $P$ consists of the first $w$ propositional letters in the enumeration, then $\mathcal{L}(q, d, P)$ is also denoted as $\mathcal{L}(q, d, w)$ and $\Omega(q, d, w)$ denotes the set of all maximally consistent sets of formulas in $\mathcal{L}(q, d, w)$. Each element in $\Omega(q, d, w)$ is also called an \emph{atom}.

For any
maximal consistent set $\Xi$ in the finite local language
$\mathcal{L}'$ with accuracy $q'$ (and the index set $I'$) and depth $d'(\geq 1 )$, and for any
formula $\phi$ of depth $\leq d'-1$ in $\mathcal{L}'$, we define:

\begin{center} $\alpha_{\phi}^{\Xi}=max\{r\in I':
L_{r}\phi\in \Xi\}$ and $\beta_{\phi}^{\Xi}=min\{r\in
I': M_{r}\phi\in \Xi\}$.

\end{center}

\begin{lem}\label{density}For the above defined $\alpha_{\phi}^{\Xi}$ and $\beta_{\phi}^{\Xi}$, either
$\alpha_{\phi}^{\Xi}=\beta_{\phi}^{\Xi}$ or
$\beta_{\phi}^{\Xi}=\alpha_{\phi}^{\Xi}+1/q'$.
\end{lem}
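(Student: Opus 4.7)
The plan is to reduce the dichotomy to two inequalities, $\alpha_\phi^\Xi \leq \beta_\phi^\Xi$ and $\beta_\phi^\Xi \leq \alpha_\phi^\Xi + 1/q'$. Because both quantities live on the finite grid $I' = \{k/q' : 0 \leq k \leq q'\}$ with uniform spacing $1/q'$, these two bounds together leave only the two possibilities $\alpha = \beta$ or $\beta = \alpha + 1/q'$. I would first verify well-definedness: by axiom $(A1)$, $L_0\phi \in \Xi$, so the set defining $\alpha$ is non-empty and finite; dually, $M_1\phi = L_0\neg\phi \in \Xi$, so the set defining $\beta$ is non-empty and finite.

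For $\alpha_\phi^\Xi \leq \beta_\phi^\Xi$, I would argue by contradiction. Suppose $\alpha > \beta$. Then $L_\alpha\phi \in \Xi$ and $M_\beta\phi = L_{1-\beta}\neg\phi \in \Xi$, and the index sum satisfies $\alpha + (1-\beta) > 1$. But axiom $(A5)$ yields $L_\alpha\phi \rightarrow \neg L_{1-\beta}\neg\phi$, so both $L_{1-\beta}\neg\phi$ and $\neg L_{1-\beta}\neg\phi$ would belong to $\Xi$, contradicting consistency.

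For $\beta_\phi^\Xi \leq \alpha_\phi^\Xi + 1/q'$, the boundary case $\alpha = 1$ forces $\beta = 1$ by the first inequality, so I may assume $\alpha < 1$ and set $\gamma := \alpha + 1/q' \in I'$. By maximality of $\alpha$, $L_\gamma \phi \notin \Xi$, so $\neg L_\gamma\phi \in \Xi$ by the maximal consistency of $\Xi$. Proposition \ref{basictheorem2}(3) then delivers $M_\gamma \phi \in \Xi$, so by the minimality of $\beta$ we obtain $\beta \leq \gamma = \alpha + 1/q'$.

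The main conceptual step is identifying the two bridging principles between the $L_r$ and $M_r$ families: axiom $(A5)$, which is essentially the only principle in $\Sigma_+$ that turns a high $L$-bound on $\phi$ into incompatibility with a low $M$-bound on $\phi$, and Proposition \ref{basictheorem2}(3), which converts the failure of an $L_r$ bound into an $M_r$ bound at the same threshold. Once these bridges are in place, the argument collapses to the brief case analysis above; all the other work is routine appeal to the maximal consistency of $\Xi$.
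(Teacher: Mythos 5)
Your proof is correct, and it is essentially the standard argument that the paper relies on but does not spell out (the lemma is stated without proof, deferred to the cited earlier work): well-definedness of $\alpha_{\phi}^{\Xi}$ and $\beta_{\phi}^{\Xi}$ from (A1), the inequality $\alpha_{\phi}^{\Xi}\leq\beta_{\phi}^{\Xi}$ from (A5), and $\beta_{\phi}^{\Xi}\leq\alpha_{\phi}^{\Xi}+1/q'$ from maximality of $\alpha_{\phi}^{\Xi}$ together with Proposition~\ref{basictheorem2}(3), after which the grid spacing $1/q'$ forces the dichotomy. No gaps.
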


Fix a formula $\psi$. Let us denote by $\Omega[\psi]$ the set of all maximally
consistent sets of formulas in $\mathcal{L}[\psi]$. This will be the carrier set of
our following canonical model. Since $\Omega[\psi]\subseteq 2^{\mathcal{L}[\psi]}$ and
$\mathcal{L}[\psi]$ is finite, so is $\Omega[\psi]$.  For any formula $\phi\in \mathcal{L[\psi]}$, define $[\phi]:=\{\Xi\in \Omega[\psi]: \phi\in \Xi\}$.

For any $\Gamma\in \Omega[\psi]$, there is a
maximal  consistent set $\Gamma^{\infty}$ in $
\mathcal{L}$ such that $\Gamma\subseteq
\Gamma^{\infty}$(Lemma \ref{Lindenbaum1}).  Note that such a
maximally consistent extension might not be unique.
Each such $\Gamma^{\infty}$ determines a
finitely additive probability measure on $\langle \Omega[\psi], 2^{\Omega[\psi]}\rangle$.  We define, for any $\phi\in
\mathcal{L}[\psi]$,

\begin{center}

$\alpha_{\phi}^{\Gamma^\infty}=sup\{r\in \mathbb{Q}:
L_{r}\phi\in \Gamma^{\infty}\} $ and
$\beta_{\phi}^{\Gamma^\infty}=inf\{r\in \mathbb{Q}:
M_{r}\phi\in \Gamma^{\infty}\} $

\end{center}

 Both $\alpha_{\phi}^{\Gamma^\infty}$ and $\beta_{\phi}^{\Gamma^\infty}$
might be irrational.

\begin{lem}
$\alpha_{\phi}^{\Gamma^\infty}=\beta_{\phi}^{\Gamma^\infty}$.

\end{lem}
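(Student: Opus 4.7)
The plan is to prove the equality by establishing the two inequalities $\alpha_\phi^{\Gamma^\infty}\le \beta_\phi^{\Gamma^\infty}$ and $\alpha_\phi^{\Gamma^\infty}\ge \beta_\phi^{\Gamma^\infty}$ separately, using only that $\Gamma^\infty$ is a maximally consistent set in the full language $\mathcal{L}$ (so every theorem of $\Sigma_+$ sits in $\Gamma^\infty$ and for every formula $\chi$ exactly one of $\chi,\neg\chi$ does), together with the density of $\mathbb{Q}$ in $\mathbb{R}$.

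For the direction $\alpha\le\beta$, I would pick arbitrary rational witnesses $r,s$ with $L_r\phi\in\Gamma^\infty$ and $M_s\phi\in\Gamma^\infty$ and show $r\le s$. Unfolding the derived modality as $M_s\phi=L_{1-s}\neg\phi$ puts us squarely in the regime of axiom $(A5)$: if $r>s$, then $r+(1-s)>1$, so $(A5)$ gives $L_r\phi\rightarrow\neg L_{1-s}\neg\phi\in\Gamma^\infty$, contradicting that $\Gamma^\infty$ contains both $L_r\phi$ and $L_{1-s}\neg\phi$. Taking the supremum over $r$ and the infimum over $s$ yields $\alpha\le\beta$.

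For the reverse inequality $\alpha\ge\beta$, I would argue by contradiction. If $\alpha<\beta$, density of $\mathbb{Q}$ in $\mathbb{R}$ supplies a rational $r$ with $\alpha<r<\beta$. Since $r>\alpha=\sup\{r'\in\mathbb{Q}:L_{r'}\phi\in\Gamma^\infty\}$, the formula $L_r\phi$ cannot belong to $\Gamma^\infty$ (otherwise $r$ would be one of the rationals whose supremum is $\alpha$, giving $r\le\alpha$), so by maximality $\neg L_r\phi\in\Gamma^\infty$. Invoking $\neg L_r\phi\rightarrow M_r\phi$ from Proposition \ref{basictheorem2}(3) places $M_r\phi\in\Gamma^\infty$, which forces $r\ge\beta$ and contradicts the choice of $r$.

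I do not anticipate a real obstacle: the argument is a limiting version of the finite-language computation in Lemma \ref{density}, where density of $\mathbb{Q}$ in $\mathbb{R}$ collapses the gap $1/q'$ to zero. The only subtlety worth flagging is that $\alpha$ and $\beta$ are typically irrational, so no single formula $L_\alpha\phi$ or $M_\beta\phi$ is available as a direct witness; the proof therefore must route through rational approximants and exploit maximality to fill the gap. The infinitary rule (ARCH) is not invoked in the present lemma itself---it will enter only later, when this common cut $\alpha=\beta$ is promoted to a countably additive probability measure on the canonical space.
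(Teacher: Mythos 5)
Your argument is correct and is exactly the standard one for this lemma: $(A5)$ applied to $M_s\phi=L_{1-s}\neg\phi$ gives $\alpha\le\beta$, and maximality plus $\neg L_r\phi\rightarrow M_r\phi$ (Proposition \ref{basictheorem2}) closes any rational gap, with no appeal to (ARCH). The paper states the lemma without proof, deferring to \cite{Zhou07, Zhou09}, and your proof matches the argument given there.
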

\begin{lem}\label{total}

    \begin{enumerate}

        \item $2^{\Omega[\psi]} = \{[\phi]: \phi\in \mathcal{L}[\psi]\}$;

        \item For any $\phi_1, \phi_2\in \mathcal{L}[\psi]$,
    $\vdash_{\Sigma_+} \phi_1\rightarrow \phi_2$ iff $[\phi_1]\subseteq
    [\phi_2]$.

    \end{enumerate}

\end{lem}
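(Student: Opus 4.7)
The plan is to prove the two parts in sequence, using that $\mathcal{L}[\psi]$ is finite and that maximal consistency in this finite quotient language is well-behaved.

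For part (1), I would first associate to each $\Xi \in \Omega[\psi]$ a \emph{characteristic formula} $\chi_{\Xi} := \bigwedge_{\phi \in \Xi} \phi$, which lies in $\mathcal{L}[\psi]$ because $\Xi \subseteq \mathcal{L}[\psi]$ is finite (we are working modulo logical equivalence, so the conjunction corresponds to a representative element). By maximality of $\Xi$, for every $\phi \in \mathcal{L}[\psi]$ either $\phi \in \Xi$ or $\neg\phi \in \Xi$, so any other $\Xi' \in \Omega[\psi]$ distinct from $\Xi$ differs from $\Xi$ on some formula, hence $\chi_{\Xi} \notin \Xi'$. Thus $[\chi_{\Xi}] = \{\Xi\}$. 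Now any $S \subseteq \Omega[\psi]$ is finite, so $S = [\,\bigvee_{\Xi \in S}\chi_{\Xi}\,]$, which exhibits $S$ as $[\phi]$ for some $\phi \in \mathcal{L}[\psi]$ (with the empty disjunction interpreted as $\bot$).

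For part (2), the forward direction is immediate: if $\vdash_{\Sigma_+}\phi_1 \to \phi_2$ and $\Xi \in [\phi_1]$, then $\phi_1 \in \Xi$; since maximal consistent sets contain all theorems and are closed under modus ponens, $\phi_2 \in \Xi$, so $\Xi \in [\phi_2]$. For the converse, I would argue by contraposition. Suppose $\not\vdash_{\Sigma_+}\phi_1 \to \phi_2$. Then $\{\phi_1, \neg\phi_2\}$ is $\Sigma_+$-consistent. Applying the Lindenbaum Lemma (Lemma~\ref{Lindenbaum1}) inside the finite set $\mathcal{L}[\psi]$ — enumerating the finitely many equivalence classes and adding either the formula or its negation at each step, which preserves consistency — yields some $\Xi \in \Omega[\psi]$ with $\phi_1 \in \Xi$ and $\neg\phi_2 \in \Xi$. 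Consistency of $\Xi$ then rules out $\phi_2 \in \Xi$, so $\Xi \in [\phi_1]\setminus[\phi_2]$, contradicting $[\phi_1]\subseteq[\phi_2]$.

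The only genuine subtlety is bookkeeping: because $\mathcal{L}[\psi]$ is defined modulo logical equivalence, I must fix a representative formula for each equivalence class when forming conjunctions and disjunctions, and verify that everything (consistency, derivability, the relation $[\cdot]$) is invariant under $\Sigma_+$-equivalence. This is immediate from (DIS) together with the propositional axioms (A0), so it causes no real difficulty. I do not expect any substantive obstacle in this lemma — it is essentially the standard truth-lemma calculation for a finitary canonical model, and the finiteness of $\mathcal{L}[\psi]$ makes both Lindenbaum extension and the disjunctive normal form construction straightforward.
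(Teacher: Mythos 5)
Your proof is correct and follows essentially the same route the paper has in mind: the paper omits the proof of this lemma (deferring to \cite{Zhou07, Zhou09}), but its surrounding machinery --- the ``statement of an atom'' as the conjunction of all its formulas, and the representation of every formula in $\mathcal{L}(q,d,P)$ as a disjunction of such statements --- is exactly your characteristic-formula argument for part (1), and your Lindenbaum-within-the-finite-language argument for part (2) is the standard one. The only point worth making explicit is that $\chi_{\Xi}\in\Xi$ itself (by maximality plus the fact that $\Xi$ derives $\chi_{\Xi}$), which you use implicitly when concluding $[\chi_{\Xi}]=\{\Xi\}$.
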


 We define $T_{\psi}(\Gamma): 2^{\Omega[\psi]}\rightarrow [0,
1]$ by
\begin{center}
$T_{\psi}(\Gamma)([\phi]) = \alpha_{\phi}^{\Gamma^\infty}$.
\end{center}

Such a defined function $T_{\psi}: 2^{\Omega[\psi]}\rightarrow
[0, 1]$ is well-defined. From the first part of Lemma \ref{total},  it follows that $T_{\psi}(\Gamma)$ is
total. It is easy to see that
$T_{\psi}(\Gamma)(\Omega[\psi])=T_{\psi}(\Gamma)([\top])=1$ since $L_1\top\in
\Gamma^{\infty}$.

\begin{thm}\label{finite-additivity} For $A, B\in 2^{\Omega[\psi]}$, if
$A\cap B=\emptyset$, then $T_{\psi}(\Gamma)(A)+T_{\psi}(\Gamma)(B)=T_{\psi}(\Gamma)(A\cup
B)$; hence such a defined $T_{\psi}$ defines a probability measure on $\langle \Omega[\psi], 2^{\Omega[\psi]}\rangle$.

\end{thm}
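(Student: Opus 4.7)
The plan is to reduce finite additivity to the arithmetic of the bounds $\alpha_{\phi}^{\Gamma^\infty}$ using the additivity theorems (Proposition \ref{additivity}) that we already have in $\Sigma_+$, together with the previous lemma telling us $\alpha_\phi^{\Gamma^\infty} = \beta_\phi^{\Gamma^\infty}$.

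First I would use Lemma \ref{total}(1) to write $A = [\phi_1]$ and $B = [\phi_2]$ for some $\phi_1, \phi_2 \in \mathcal{L}[\psi]$. From $A \cap B = \emptyset$ we have $[\phi_1 \wedge \phi_2] = \emptyset = [\bot]$, so by Lemma \ref{total}(2) we get $\vdash_{\Sigma_+} \neg(\phi_1 \wedge \phi_2)$. A routine check also gives $A \cup B = [\phi_1 \vee \phi_2]$. So the goal becomes
\[
\alpha_{\phi_1}^{\Gamma^\infty} + \alpha_{\phi_2}^{\Gamma^\infty} \;=\; \alpha_{\phi_1 \vee \phi_2}^{\Gamma^\infty}.
\]

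Next I would verify that $\alpha_{\phi_1}^{\Gamma^\infty} + \alpha_{\phi_2}^{\Gamma^\infty} \le 1$, so that the hypothesis $r+s \le 1$ of Proposition \ref{additivity} is not vacuous near the sup. If not, there would exist rationals $r,s$ with $L_r\phi_1, L_s\phi_2 \in \Gamma^\infty$ and $r+s > 1$; but from $\vdash \neg(\phi_1 \wedge \phi_2)$ we get $\vdash \phi_2 \to \neg \phi_1$, hence $\vdash L_s\phi_2 \to L_s\neg \phi_1$ by Proposition \ref{basictheorem2}(1), while Axiom (A5) gives $L_r\phi_1 \to \neg L_s\neg \phi_1$, contradicting the consistency of $\Gamma^\infty$.

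Then I would prove the two inequalities separately by approximating the sup with rationals. For $\le$: pick any rationals $r > \alpha_{\phi_1}^{\Gamma^\infty}$, $s > \alpha_{\phi_2}^{\Gamma^\infty}$ with $r+s \le 1$ (possible by the previous step). Then $L_r\phi_1, L_s\phi_2 \notin \Gamma^\infty$; since $\Gamma^\infty$ is maximal, $\neg L_r\phi_1, \neg L_s\phi_2 \in \Gamma^\infty$, and Proposition \ref{additivity}(2) yields $\neg L_{r+s}(\phi_1 \vee \phi_2) \in \Gamma^\infty$, so $\alpha_{\phi_1 \vee \phi_2}^{\Gamma^\infty} \le r+s$; letting $r,s$ decrease to their sups gives $\le$. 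For $\ge$: pick $r < \alpha_{\phi_1}^{\Gamma^\infty}$, $s < \alpha_{\phi_2}^{\Gamma^\infty}$, use the monotonicity $\vdash L_{r'}\phi \to L_r\phi$ for $r \le r'$ (Proposition \ref{basictheorem2}(2)) to see $L_r\phi_1, L_s\phi_2 \in \Gamma^\infty$, and apply Proposition \ref{additivity}(1) to get $L_{r+s}(\phi_1 \vee \phi_2) \in \Gamma^\infty$, hence $\alpha_{\phi_1 \vee \phi_2}^{\Gamma^\infty} \ge r+s$; taking sups gives $\ge$.

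For the second clause, once finite additivity is established, $T_\psi(\Gamma)$ is a nonnegative set function on the Boolean algebra $2^{\Omega[\psi]}$ with $T_\psi(\Gamma)(\Omega[\psi]) = 1$ (already noted before the statement using $L_1 \top \in \Gamma^\infty$), so it is a finitely additive probability measure; since $\Omega[\psi]$ is finite, $\sigma$-additivity is automatic. The only delicate point in the whole argument is making sure the constraint $r+s \le 1$ in Proposition \ref{additivity} can always be enforced when we approach the two suprema from above, and that is exactly what the bound $\alpha_{\phi_1}^{\Gamma^\infty} + \alpha_{\phi_2}^{\Gamma^\infty} \le 1$ gives us.
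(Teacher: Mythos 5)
Your proof is correct and follows exactly the route the paper's machinery is set up for (the paper itself omits the argument, deferring to \cite{Zhou07,Zhou09}, where the proof is the same: reduce via Lemma \ref{total} to showing $\alpha_{\phi_1}^{\Gamma^\infty}+\alpha_{\phi_2}^{\Gamma^\infty}=\alpha_{\phi_1\vee\phi_2}^{\Gamma^\infty}$ and squeeze both inequalities with Proposition \ref{additivity} and rational approximation of the suprema). One cosmetic remark: when $\alpha_{\phi_1}^{\Gamma^\infty}+\alpha_{\phi_2}^{\Gamma^\infty}=1$ (resp.\ when one of the $\alpha$'s equals $0$), the rationals you ask for in the $\leq$ (resp.\ $\geq$) direction need not exist, but in those degenerate cases the corresponding inequality is immediate from $\alpha\leq 1$ (resp.\ from monotonicity), so nothing is lost.
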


The following proposition is the crucial step to show the truth lemma. Also this is the place where we really need the rule $(ARCH)$.

\begin{thm} \label{Crucial} For any  $\Gamma\in \Omega[\psi]$,
there is a maximally consistent extension $\Gamma^{\infty}$ in $\mathcal{L}$ of $\Gamma$ such that,
for any formula $\phi\in \mathcal{L}[\psi]$,
\begin{itemize}
    \item if $\alpha_{\phi}^{\Gamma} =\beta_{\phi}^{\Gamma}$, then $ \alpha_{\phi}^{\Gamma^{\infty}}= \alpha_{\phi}^{\Gamma}$;
    \item if $\alpha_{\phi}^{\Gamma} < \beta_{\phi}^{\Gamma}$, then $ \alpha_{\phi}^{\Gamma} < \alpha_{\phi}^{\Gamma^{\infty}}< \beta_{\phi}^{\Gamma}$;

\end{itemize}

\end{thm}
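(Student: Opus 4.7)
The plan is to construct $\Gamma^{\infty}$ by first choosing, for each ``ambiguous'' $\phi \in \mathcal{L}[\psi]$ (one with $\alpha_{\phi}^{\Gamma} < \beta_{\phi}^{\Gamma}$), an irrational target $\tau_{\phi}$ strictly inside the open interval $(\alpha_{\phi}^{\Gamma}, \beta_{\phi}^{\Gamma})$; this is possible because Lemma~\ref{density} guarantees this interval has positive length $1/q_{\psi}$ and $\mathcal{L}[\psi]$ is finite, so only finitely many such choices are required. For a non-ambiguous $\phi$ (with $\alpha_{\phi}^{\Gamma} = \beta_{\phi}^{\Gamma} = \alpha$) no choice is needed: both $L_{\alpha}\phi$ and $M_{\alpha}\phi = L_{1-\alpha}\neg\phi$ lie in $\Gamma$, so (A5) forces $\Gamma \vdash \neg L_{r}\phi$ for every rational $r > \alpha$, pinning $\alpha_{\phi}^{\Gamma^{\infty}} = \alpha$ automatically in every consistent extension.

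I would then set
\[
\Gamma^{*} := \Gamma \;\cup\; \bigcup_{\phi \text{ ambiguous}} \bigl(\{L_{r}\phi : r \in \mathbb{Q}\cap[0,1],\, r < \tau_{\phi}\} \cup \{\neg L_{r}\phi : r \in \mathbb{Q}\cap[0,1],\, r > \tau_{\phi}\}\bigr)
\]
and prove that $\Gamma^{*}$ is $\Sigma_{+}$-consistent. Granted this, Lemma~\ref{Lindenbaum1} yields a maximal consistent $\Gamma^{\infty} \supseteq \Gamma^{*}$ in $\mathcal{L}$; by construction, for every ambiguous $\phi$ each rational $r < \tau_{\phi}$ satisfies $L_{r}\phi \in \Gamma^{\infty}$ and each rational $r > \tau_{\phi}$ satisfies $\neg L_{r}\phi \in \Gamma^{\infty}$, giving $\alpha_{\phi}^{\Gamma^{\infty}} = \tau_{\phi}$. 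Since $\tau_{\phi}$ is irrational and lies strictly inside $(\alpha_{\phi}^{\Gamma}, \beta_{\phi}^{\Gamma})$, both clauses of the theorem follow at once.

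The consistency of $\Gamma^{*}$ is the main obstacle. After using the monotonicity of Proposition~\ref{basictheorem2}(2) to discard dominated premises, a finite inconsistency would reduce to the claim that $\Gamma \cup \{L_{r_{i}}\phi_{i}, \neg L_{s_{i}}\phi_{i} : 1\leq i\leq n\}$ is $\Sigma_{+}$-inconsistent for some rationals $\alpha_{\phi_{i}}^{\Gamma} \leq r_{i} < \tau_{\phi_{i}} < s_{i} \leq \beta_{\phi_{i}}^{\Gamma}$. I would refute this by induction on $n$ combined with an (ARCH) squeeze. At the inductive step, isolating the $n$-th coordinate, letting $s_{n}$ range over $(\tau_{\phi_{n}}, \beta_{\phi_{n}}^{\Gamma}) \cap \mathbb{Q}$, and treating $\Gamma \cup \{L_{r_{j}}\phi_{j}, \neg L_{s_{j}}\phi_{j} : j<n\} \cup \{L_{r_{n}}\phi_{n}\}$ as the antecedent $\gamma$ for (ARCH), inconsistency for every such $s_{n}$ would force $\gamma \vdash L_{s}\phi_{n}$ for all rational $s < \beta_{\phi_{n}}^{\Gamma}$ and hence $\gamma \vdash L_{\beta_{\phi_{n}}^{\Gamma}}\phi_{n}$, contradicting $\neg L_{\beta_{\phi_{n}}^{\Gamma}}\phi_{n} \in \Gamma$; a symmetric squeeze on $r_{n}$ via Proposition~\ref{basictheorem3} and (A5) handles the $M$-side.

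The delicate point is propagating a \emph{single-instance} inconsistency into a parametrised family uniform in $s_{n}$, since the additivity axioms (A3)--(A4) and (DIS) may entangle $\phi_{n}$ with the other $\phi_{j}$'s through Boolean combinations nested under $L_{r}$. If this syntactic reorganisation becomes unwieldy, a cleaner semantic fallback is available through the finiteness of $\Omega[\psi]$: the finitely additive probability measures on $\Omega[\psi]$ satisfying $\mu([\phi]) \in [\alpha_{\phi}^{\Gamma}, \beta_{\phi}^{\Gamma}]$ for every relevant $\phi$ form a nonempty convex polytope (nonemptiness established by a direct finitary construction from the maximal consistency of $\Gamma$), whose relative interior meets the open box $\prod_{\phi\text{ amb.}}(\alpha_{\phi}^{\Gamma}, \beta_{\phi}^{\Gamma})$; selecting the $\tau_{\phi}$'s from such an interior measure and pulling back via soundness and Lindenbaum then delivers the required $\Gamma^{\infty}$.
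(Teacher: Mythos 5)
The paper does not actually spell out a proof of Theorem \ref{Crucial}: it is stated as the crucial step and its proof is deferred to the cited earlier work, where (as the concluding section indicates) $\Gamma^{\infty}$ is obtained by extending $\Gamma$ through finite local languages of finer and finer accuracy, with (ARCH) controlling the limit. Judged on its own terms, your proposal has a genuine gap at the very first step: the targets $\tau_{\phi}$ cannot be chosen independently, one ambiguous $\phi$ at a time. They must jointly respect the additivity that (A3), (A4) and Proposition \ref{additivity} impose across Boolean combinations inside $\mathcal{L}[\psi]$: whenever $\phi\wedge\chi$, $\phi\wedge\neg\chi$ and $\phi$ are all in play, any consistent assignment must satisfy $\tau_{\phi\wedge\chi}+\tau_{\phi\wedge\neg\chi}=\tau_{\phi}$ (with pinned values $\alpha^{\Gamma}$ substituted for non-ambiguous conjuncts). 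If instead $\tau_{\phi\wedge\chi}+\tau_{\phi\wedge\neg\chi}>\tau_{\phi}$ --- which independent choices will typically produce, e.g.\ with intervals $[1/4,1/2],[1/4,1/2],[1/2,3/4]$ and picks $0.26,0.26,0.51$ --- choose rationals $r_{1}<\tau_{\phi\wedge\chi}$, $r_{2}<\tau_{\phi\wedge\neg\chi}$, $s>\tau_{\phi}$ with $r_{1}+r_{2}>s$; then (A3) gives $L_{r_{1}}(\phi\wedge\chi)\wedge L_{r_{2}}(\phi\wedge\neg\chi)\rightarrow L_{r_{1}+r_{2}}\phi$, which together with $\neg L_{s}\phi\in\Gamma^{*}$ and Proposition \ref{basictheorem2}(2) makes $\Gamma^{*}$ inconsistent. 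So your $\Gamma^{*}$ is in general inconsistent before the consistency argument even starts, and the entire burden shifts to producing a \emph{coordinated} family $(\tau_{\phi})_{\phi}$, i.e.\ a finitely additive measure on the finite Lindenbaum algebra hitting the interior of every nondegenerate interval $[\alpha_{\phi}^{\Gamma},\beta_{\phi}^{\Gamma}]$. That feasibility statement is the combinatorial core of the theorem, and your proposal does not establish it. (Your treatment of the non-ambiguous case via (A5) and monotonicity is correct.)

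Neither of your two repairs closes the gap. The (ARCH) squeeze runs in the wrong direction: from a single inconsistent instance $\gamma\cup\{\neg L_{s_{n}}\phi_{n}\}$, monotonicity propagates inconsistency only to \emph{smaller} $s_{n}'$ (tighter constraints), whereas (ARCH) needs $\gamma\vdash L_{s}\phi_{n}$ for all $s<\beta_{\phi_{n}}^{\Gamma}$, i.e.\ inconsistency for $s$ arbitrarily close to $\beta_{\phi_{n}}^{\Gamma}$ from below; you flag this as ``the delicate point'' but do not resolve it. The semantic fallback does address the coordination problem in principle, but (i) the nonemptiness of the constraint polytope, with relative interior meeting the open box, is exactly what has to be proved from the consistency of $\Gamma$ under (A1)--(A5) (a de Finetti-style coherence/LP-duality argument) and is merely asserted; and (ii) ``pulling back via soundness and Lindenbaum'' is not a legitimate move here: soundness converts a \emph{model} of $\Gamma^{*}$ into consistency, but building such a model requires type functions at every atom of $\Omega[\psi]$ plus a truth lemma --- precisely the machinery that Theorem \ref{Crucial} is needed to set up. A correct argument must either carry out the finite feasibility claim syntactically or follow the cited route of refining the accuracy of the local language step by step.
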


For each $\Gamma \in \Omega[\psi]$ and its such extension $\Gamma^{\infty}$, we define a type function on $\langle \Omega[\psi], 2^{\Omega[\psi]}\rangle$: $T_{\psi}(\Gamma)([\phi]):=\alpha_{\phi}^{\Gamma^{\infty}}$. It is easy to check that $M[\psi]:=\langle \Omega[\psi], 2^{\Omega[\psi]}, T_{\psi}\rangle$ is a probability model. We call it \emph{a canonical model} for $\Sigma_+$.

\begin{lem}\label{truth} (Truth Lemma) For any $\phi\in \mathcal{L}[\psi]$ and $\Gamma \in \Omega[\psi]$,

\begin{center}

$M[\psi], \Gamma\models \phi$ iff  $\phi\in
\Gamma$.

\end{center}

\end{lem}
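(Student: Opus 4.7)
The plan is to proceed by induction on the depth of $\phi \in \mathcal{L}[\psi]$. The base case, where $\phi$ is a propositional letter, follows immediately from the way the canonical valuation is read off each atom $\Gamma \in \Omega[\psi]$: the propositional letters available in $\mathcal{L}[\psi]$ coincide with those in $\psi$, and each $\Gamma$ specifies by maximal consistency exactly which of them it contains. The Boolean cases $\phi = \neg\chi$ and $\phi = \chi_1 \wedge \chi_2$ are routine, following from the maximality of $\Gamma$ together with the induction hypothesis.

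The substantive case is $\phi = L_r \chi$ with $\chi \in \mathcal{L}[\psi]$ of strictly smaller depth. The induction hypothesis gives $[[\chi]] = [\chi]$ in $M[\psi]$, so by the definition of the canonical type function the assertion $M[\psi], \Gamma \models L_r \chi$ unfolds to $T_{\psi}(\Gamma)([\chi]) = \alpha_{\chi}^{\Gamma^{\infty}} \geq r$. What remains is to establish the equivalence of this inequality with $L_r \chi \in \Gamma$. Here I would use Lemma \ref{density} to split into the two only possibilities, namely $\alpha_{\chi}^{\Gamma} = \beta_{\chi}^{\Gamma}$ or $\beta_{\chi}^{\Gamma} = \alpha_{\chi}^{\Gamma} + 1/q_{\psi}$, and then apply Theorem \ref{Crucial} to locate $\alpha_{\chi}^{\Gamma^{\infty}}$ in each.

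In the first case, Theorem \ref{Crucial} gives $\alpha_{\chi}^{\Gamma^{\infty}} = \alpha_{\chi}^{\Gamma}$, so the inequality $\alpha_{\chi}^{\Gamma^{\infty}} \geq r$ reduces to $r \leq \alpha_{\chi}^{\Gamma}$, which by Proposition \ref{basictheorem2}(2) and the maximality of $\Gamma$ in $\mathcal{L}[\psi]$ is equivalent to $L_r \chi \in \Gamma$; note that $r \in I[\psi]$ because $L_r \chi$ itself belongs to $\mathcal{L}[\psi]$. In the second case we have the strict inequalities $\alpha_{\chi}^{\Gamma} < \alpha_{\chi}^{\Gamma^{\infty}} < \alpha_{\chi}^{\Gamma} + 1/q_{\psi}$; since $r$ is a multiple of $1/q_{\psi}$, any $r$ satisfying $r \leq \alpha_{\chi}^{\Gamma^{\infty}}$ in fact satisfies $r \leq \alpha_{\chi}^{\Gamma}$, so again the inequality is equivalent to $L_r \chi \in \Gamma$.

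The main obstacle is not the truth lemma itself but the underlying extension Theorem \ref{Crucial}, which is the genuine use of the \textbf{(ARCH)} rule and places $\alpha_{\chi}^{\Gamma^{\infty}}$ precisely inside the half-open interval determined by $\alpha_{\chi}^{\Gamma}$ and $\beta_{\chi}^{\Gamma}$; granted that result, the modal step is a clean arithmetic comparison on the grid $I[\psi]$. The only subtlety inside the truth lemma proper is to remember that every modal index occurring in a formula of $\mathcal{L}[\psi]$ is a multiple of $1/q_{\psi}$, so that irrational values of $\alpha_{\chi}^{\Gamma^{\infty}}$ are harmless for the equivalence.
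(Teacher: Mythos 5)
Your proposal is correct and follows essentially the same route as the paper: structural induction with the only nontrivial case being $L_r\chi$, where the induction hypothesis $[[\chi]]=[\chi]$ reduces the claim to comparing $\alpha_{\chi}^{\Gamma^{\infty}}$ with $r$, and the case split via Lemma \ref{density} together with Theorem \ref{Crucial} yields the equivalence with $L_r\chi\in\Gamma$ exactly as in the paper's parallel argument for Claim \ref{Claim3} in Theorem \ref{ConservationDepth}. Your closing remarks correctly identify Theorem \ref{Crucial} (and hence (ARCH)) as the real content and the grid argument on multiples of $1/q_{\psi}$ as the only subtlety.
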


\begin{thm} \label{Completeness}(Completeness) For any formula $\psi$, it is consistent (in $\Sigma_+$) if and only if it is satisfied in the class of type spaces.

\end{thm}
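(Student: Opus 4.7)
The plan is to prove both implications. The soundness direction (if $\psi$ is satisfiable in a type space, then $\psi$ is $\Sigma_+$-consistent) reduces by contrapositive to showing that every axiom of $\Sigma_+$ is valid in every probability model and that every rule preserves validity. Axioms (A1)--(A5) mirror the measure-theoretic properties (\ref{1})--(\ref{6}) of belief operators verified in the introduction, (DIS) is clear because logically equivalent formulas have identical interpretations, and (ARCH) is validated by the Archimedean property of $\mathbb{Q}$: if at every state where $\gamma$ holds the measure of $[[\phi]]$ is at least $s$ for every rational $s<r$, then that measure is at least $r$.

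For the nontrivial direction, suppose $\psi$ is $\Sigma_+$-consistent. I would first apply the Lindenbaum Lemma (Lemma \ref{Lindenbaum1}), but only inside the finite local language $\mathcal{L}[\psi]$, to obtain a maximally consistent $\Gamma \in \Omega[\psi]$ with $\psi\in\Gamma$. The next step is to verify that the triple $M[\psi]=\langle \Omega[\psi], 2^{\Omega[\psi]}, T_{\psi}\rangle$ constructed above really is a probability model: since $\Omega[\psi]$ is finite, the $\sigma$-algebra is just the powerset, and the finite additivity established in Theorem \ref{finite-additivity} automatically upgrades to $\sigma$-additivity of $T_{\psi}(\Gamma')$ for each $\Gamma'$; measurability of the type function with respect to the canonical $\sigma$-algebra on $\Delta(\Omega[\psi], 2^{\Omega[\psi]})$ is trivial in the finite case. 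With $M[\psi]$ in hand, the Truth Lemma (Lemma \ref{truth}) gives $M[\psi],\Gamma \models \psi$, so $\psi$ is satisfied in a type space, as required.

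The genuinely hard step has already been absorbed by the preceding development: Theorem \ref{Crucial} is where (ARCH) is actually consumed, since it produces, for each finitary $\Gamma\in\Omega[\psi]$, a maximally consistent extension $\Gamma^\infty$ in the full language $\mathcal{L}$ whose supremum values $\alpha_\phi^{\Gamma^\infty}$ land inside the interval prescribed by the finite data of $\Gamma$. That calibration is precisely what makes $T_\psi(\Gamma)([\phi])=\alpha_\phi^{\Gamma^\infty}$ a well-defined measure that respects the modal formulas in $\Gamma$, and it is what drives the inductive clause for $L_r\phi$ in the Truth Lemma. Given all of this scaffolding, the completeness theorem itself is essentially a bookkeeping synthesis: pick $\Gamma\ni\psi$, invoke $M[\psi]$, invoke Lemma \ref{truth}, and conclude.
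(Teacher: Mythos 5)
Your proposal is correct and follows essentially the same route as the paper: the paper's own proof of Theorem \ref{Completeness} is exactly the assembly of the preceding scaffolding — Lindenbaum in the finite local language $\mathcal{L}[\psi]$, the canonical model $M[\psi]$ on $\Omega[\psi]$ with the type function calibrated via Theorem \ref{Crucial} (where (ARCH) is consumed), and the Truth Lemma \ref{truth} — together with routine soundness checks. Your added remarks on the trivial upgrade to $\sigma$-additivity and measurability in the finite case are accurate and consistent with the paper's construction.
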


\section{Probability Logic $\Sigma_H$ for Harsanyi Type Spaces}

\noindent In this section we will show that the axiom system
$\Sigma_+$ plus the following two axiom schemes:

\begin{itemize}

    \item $(4_p): L_r\phi\rightarrow L_1L_r\phi$

    \item $(5_p):  \neg L_r\phi\rightarrow L_1\neg
L_r\phi$

\end{itemize}

\noindent is sound and complete with respect to a special class of type spaces called \emph{Harsanyi
type spaces}.  Let $\Sigma_H$ denote this system and $\vdash_{\Sigma_H}\phi$ denote $\phi \in \Sigma_H$.  Assume that
$\langle \Omega, \mathcal{A}, T\rangle$ is a type space.
$[T(w)]$ denotes $\{w'\in \Omega: T(w)=T(w')\}$. Note that $[T(w)]$ is not necessarily
 $\mathcal{A}$-measurable. If each type is
certain of its type, i.e.,

\begin{center} $(H): T(w)(E)=1$ for all $w\in \Omega$ and $E$ such that $[T(w)] \subseteq E \in \mathcal{A}$,

\end{center}

\noindent the type space is called \emph{a Harsanyi type space}.   If $\mathcal{A}$
is generated by a countable subalgebra $\mathcal{A}_0$, then

\begin{eqnarray}
[T(w)] & = & \bigcap_{A\in\mathcal{A}_0}\{w'\in \Omega:
T(w)(A)=T(w')(A)\}\nonumber\\
 & = & \bigcap_{A\in \mathcal{A}_0}\bigcap_{r\in
\mathbb{Q}\cap[0, 1]}\{w'\in \Omega: T(w')(A)\geq r \leftrightarrow
T(w)(A) \geq r\}\nonumber\\
& = & \bigcap_{A\in \mathcal{A}_0}\bigcap_{r\in \mathbb{Q}\cap[0,
1], T(w)(A) \geq r}\{w'\in \Omega: T(w')(A)\geq r \}\nonumber
\end{eqnarray}

\noindent and hence $[T(w)]$ is $\mathcal{A}$-measurable.  With
this countable subalgebra assumption, it is easy to check that the condition $(H)$ is equivalent to the following condition:
\begin{center}
    $(H')$: For any $w\in \Omega$ and $E\in \mathcal{A}$, $T(w)(\{v\in \Omega: T(v, E)\neq T(w, E)\})=0$,
\end{center}
which is the defining condition in \cite{AH02} for Harsanyi type spaces.


In order to show the following properties of $\Sigma_H$,  we need to define \emph{normal forms} for probability formulas.

\begin{defi} Let $\Phi$ be a maximally consistent set of formulas in $\mathcal{L}(q, d,w)$, namely an atom.    A formula $\phi$  is called a \emph{statement} in $\mathcal{L}(q, d, w)$ if it is of the form $\bigwedge\{p_i: p_i\in \Phi\}\wedge
    \bigwedge \{\neg p_i: \neg p_i\in \Phi\}$ $\wedge \bigwedge \{L_r\phi: L_r\phi \in
    \Phi\} \wedge \bigwedge \{\neg L_r\phi:
    \neg L_r\phi\in \Phi\}$.  For the formula $\phi$, its first part  $\bigwedge\{p_i: p_i\in \Phi\}\wedge
    \bigwedge \{\neg p_i: \neg p_i\in \Phi\}$ is called \emph{the
    propositional part}  and its second part $\bigwedge \{L_r\phi: L_r\phi \in
    \Phi\} \wedge \bigwedge \{\neg L_r\phi:
    \neg L_r\phi\in \Phi\}$ is called \emph{the
    probability part}.

\end{defi}
 For a statement $\phi$ in $\mathcal{L}(q, d, w)$, its propositional part and  probability part are denoted by $\phi^{=0}$
 and $\phi^{>0}$, respectively.

\begin{lem} For any atom $\Gamma\in \Omega(q, d, w)$,  the conjunction $\gamma$ of all
formulas in $\Gamma$
 is tautologically equivalent to a statement in the language
 $\mathcal{L}(q, d, w)$, which is simply called \emph{the statement of the atom $\Gamma$}. The propositional (probability) part of the statement is simply called \emph{the propositional (probability) part of the atom $\Gamma$}.   Moreover, any formula $\phi\in \Phi(q, d,
 w)$ is tautologically equivalent to a disjunction of statements in
  $\mathcal{L}(q, d, w)$. This disjunction is called \emph{a normal form} of
  $\phi$.

\end{lem}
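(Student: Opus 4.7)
The plan is a straightforward induction on the depth $d$, combined with a normal-form argument treating probability formulas of strictly smaller depth as propositional atoms. First I would identify a finite set $B(q,d,w)$ of \emph{basic formulas} at depth $d$: namely the propositional letters $p_1,\dots,p_w$ together with all formulas $L_r\phi$ where $r$ is a multiple of $1/q$ lying in $I$ and $\phi$ ranges over representatives of the equivalence classes of $\mathcal{L}(q,d-1,w)$. The already-established finiteness of $\mathcal{L}(q,d-1,w)$ modulo equivalence, together with the rule (DIS) (ensuring that logically equivalent inner formulas produce logically equivalent $L_r$-prefixed formulas), guarantees that $B(q,d,w)$ is finite modulo equivalence. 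Every formula in $\mathcal{L}(q,d,w)$ is then a Boolean combination of formulas in $B(q,d,w)$.

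For the first assertion, fix an atom $\Gamma\in\Omega(q,d,w)$. Since $\Gamma$ is maximally consistent in a finite language, for each basic formula $b\in B(q,d,w)$ exactly one of $b$ or $\neg b$ lies in $\Gamma$. Let $\gamma_\Gamma$ be the conjunction of the chosen literals; this is by construction a statement in the sense of the definition (its propositional part consists of the literals over $p_1,\dots,p_w$, and its probability part consists of the literals over the basic probability formulas). Clearly $\gamma\vdash\gamma_\Gamma$ since each conjunct of $\gamma_\Gamma$ is an element of $\Gamma$. Conversely, every formula $\psi\in\Gamma$ is a Boolean combination of elements of $B(q,d,w)$, and since $\gamma_\Gamma$ already decides each such basic formula, propositional reasoning yields $\gamma_\Gamma\vdash\psi$. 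Hence $\gamma\equiv\gamma_\Gamma$ tautologically, viewing the basic formulas as propositional atoms.

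For the ``moreover'' part, fix any $\phi\in\mathcal{L}(q,d,w)$. Writing $\phi$ as a Boolean combination of elements of $B(q,d,w)$ and applying ordinary propositional disjunctive-normal-form reasoning over this finite set of atoms yields $\phi$ as a disjunction of full conjunctions of literals, i.e.\ a disjunction of candidate statements. Each consistent disjunct extends to (in fact, coincides with) the statement $\gamma_\Gamma$ of some atom $\Gamma\in\Omega(q,d,w)$, and $\Gamma\ni\phi$ iff the corresponding disjunct appears; so one obtains
\[
\phi\;\equiv\;\bigvee\{\,\gamma_\Gamma : \Gamma\in\Omega(q,d,w),\ \phi\in\Gamma\,\},
\]
which is the desired normal form.

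The one place that needs care, rather than a genuine obstacle, is keeping the bookkeeping of equivalence classes consistent between the outer Boolean layer and the inner modal arguments: I must enumerate basic formulas \emph{modulo} provable equivalence so that $B(q,d,w)$ is actually finite, and I must invoke (DIS) to ensure that the choice of representative for the inner $\phi$ inside $L_r\phi$ does not matter. Once this is set up, the argument is purely propositional and requires no further appeal to the probabilistic axioms (A1)--(A5) or to (ARCH); both parts of the lemma reduce to the observation that a maximally consistent subset of a finite Boolean algebra of formulas is determined, up to equivalence, by its restriction to any generating set of atoms.
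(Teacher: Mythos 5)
Your proposal is correct and takes essentially the same route as the paper, which simply observes that the lemma "follows from the same argument for disjunctive normal forms in the usual propositional reasoning"; you have merely spelled out the details of treating the finitely many (modulo equivalence, via (DIS)) formulas $L_r\phi$ with $\phi\in\mathcal{L}(q,d-1,w)$ together with the propositional letters as propositional atoms and applying standard DNF reasoning. No gaps.
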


\begin{proof} This proposition follows from the same argument for disjunctive normal forms in the usual propositional reasoning.
\end{proof}

\begin{lem}\label{WedgeOne}  $\vdash_{\Sigma_H}(L_r\phi\wedge L_s \psi)\rightarrow L_1(L_r\phi\wedge L_s
\psi)$, and  $\vdash_{\Sigma_H}(L_r\phi\wedge \neg L_s
\psi)\rightarrow L_1(L_r\phi\wedge \neg L_s \psi)$

\end{lem}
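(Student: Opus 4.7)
The plan is to prove each implication by first applying the introspection axioms to each conjunct separately and then combining the results under a single $L_1$ operator.

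For the first implication, I would start by noting that axiom $(4_p)$ yields both $\vdash L_r\phi \rightarrow L_1 L_r\phi$ and $\vdash L_s\psi \rightarrow L_1 L_s\psi$. Conjoining these gives $\vdash (L_r\phi \wedge L_s\psi) \rightarrow (L_1 L_r\phi \wedge L_1 L_s\psi)$. To finish, I would appeal to Corollary \ref{PlusOne}(1) specialized by taking the second index equal to $1$, which gives $\vdash L_1\chi_1 \wedge L_1\chi_2 \rightarrow L_1(\chi_1 \wedge \chi_2)$; instantiating with $\chi_1 = L_r\phi$ and $\chi_2 = L_s\psi$ yields $\vdash L_1 L_r\phi \wedge L_1 L_s\psi \rightarrow L_1(L_r\phi \wedge L_s\psi)$. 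Chaining the two implications produces the desired conclusion.

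For the second implication, I would proceed identically but use $(5_p)$ on the negated conjunct: we have $\vdash L_r\phi \rightarrow L_1 L_r\phi$ from $(4_p)$ and $\vdash \neg L_s\psi \rightarrow L_1 \neg L_s\psi$ from $(5_p)$. Conjoining gives $\vdash (L_r\phi \wedge \neg L_s\psi) \rightarrow (L_1 L_r\phi \wedge L_1 \neg L_s\psi)$, and then the same specialization of Corollary \ref{PlusOne}(1) (with $\chi_2 = \neg L_s\psi$) collapses the two $L_1$'s into one to give $\vdash L_1(L_r\phi \wedge \neg L_s\psi)$.

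I do not anticipate any real obstacle here: the work is entirely propositional once we notice that $(4_p)$ and $(5_p)$ push the $L_1$ in front of each conjunct individually, and then $L_1$-distribution over conjunctions (a direct consequence of Corollary \ref{PlusOne}(1) in the degenerate case where both probability indices equal $1$) merges them. The only minor point worth spelling out in the actual write-up is why Corollary \ref{PlusOne}(1) applies with index $1$ on both sides, but this is immediate from its statement $\vdash L_1\phi \wedge L_r\psi \rightarrow L_r(\phi \wedge \psi)$ by setting $r = 1$.
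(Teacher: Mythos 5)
Your proposal is correct and follows essentially the same route as the paper: apply $(4_p)$ (and $(5_p)$ for the negated conjunct) to each conjunct, then merge the two $L_1$'s via Corollary \ref{PlusOne}(1) with index $1$. The paper's proof is exactly this for the first implication and declares the second "similar," which you have simply spelled out.
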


\proof  The proof of the second part is similar to that of
the first one.  We only show the first part.  Reason inside
$\Sigma_H$:

\begin{eqnarray}
L_r\phi \wedge L_s\psi & \rightarrow & L_1L_r\phi \wedge
L_1L_s\psi \qquad \text{(Axiom $4_p$)}\nonumber\\
& \rightarrow & L_1(L_r\phi \wedge L_s\psi)\qquad \text{(Corollary \ref{PlusOne})}\nonumber
\end{eqnarray}\vspace{-30 pt}

\qed

\begin{cor} \label{CertainProbability}For any statement $\phi$, $\vdash_{\Sigma_H}\phi^{> 0}\rightarrow L_1(\phi^{>0})$.

\end{cor}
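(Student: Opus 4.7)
The plan is a straightforward induction on the number of conjuncts making up $\phi^{>0}$, using Lemma~\ref{WedgeOne} (and its symmetric version) as the essential base/inductive step and the axioms $4_p$ and $5_p$ as the atomic case. By definition, $\phi^{>0}$ is a finite conjunction $\alpha_1\wedge\cdots\wedge\alpha_n$ where each $\alpha_i$ is either of the form $L_{r_i}\psi_i$ or of the form $\neg L_{r_i}\psi_i$. The goal is $\phi^{>0}\to L_1(\phi^{>0})$.

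First I would dispose of the base case $n=1$ directly: if $\alpha_1 = L_{r_1}\psi_1$ then $\alpha_1 \to L_1\alpha_1$ is exactly axiom $(4_p)$; if $\alpha_1 = \neg L_{r_1}\psi_1$ it is exactly $(5_p)$. For the inductive step, assume $\vdash_{\Sigma_H}(\alpha_2\wedge\cdots\wedge\alpha_n)\to L_1(\alpha_2\wedge\cdots\wedge\alpha_n)$. Combined with $\alpha_1\to L_1\alpha_1$ (again by $4_p$ or $5_p$ according to the shape of $\alpha_1$), propositional reasoning yields
\[
\alpha_1\wedge\alpha_2\wedge\cdots\wedge\alpha_n \;\to\; L_1\alpha_1 \wedge L_1(\alpha_2\wedge\cdots\wedge\alpha_n).
\]
Now I would close the induction by invoking Corollary~\ref{PlusOne}(1) in the special case $r=1$, which gives $L_1\chi\wedge L_1\xi\to L_1(\chi\wedge\xi)$; applied here, this yields $L_1(\alpha_1\wedge\alpha_2\wedge\cdots\wedge\alpha_n)=L_1(\phi^{>0})$.

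There is essentially no obstacle: Lemma~\ref{WedgeOne} already packages the two-conjunct version (the case $n=2$, mixed signs), and the only subtlety is handling both positive and negative conjuncts uniformly in the base case, which is immediate from the symmetry between $(4_p)$ and $(5_p)$. One should also note the degenerate case when the probability part is empty (i.e.\ the atom mentions no modal conjuncts at the top level), in which case $\phi^{>0}=\top$ and the conclusion reduces to $\top\to L_1\top$, which is axiom $(A2)$ with $r=1$.
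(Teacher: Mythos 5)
Your proof is correct and matches the paper's intent: the paper states this corollary without proof as the evident iteration of Lemma~\ref{WedgeOne}, whose own proof is precisely the $n=2$ instance of your induction (axioms $(4_p)$/$(5_p)$ for the conjuncts, Corollary~\ref{PlusOne} with $r=1$ to merge the $L_1$'s). Your explicit treatment of the base case, the negative conjuncts via $(5_p)$, and the degenerate empty-conjunction case via $(A2)$ is exactly the argument the paper leaves implicit.
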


\subsection{Completeness}
The following completeness theorem is based on the proof of the completeness of $\Sigma_+$ (Theorem \ref{Completeness}):

\begin{thm} \label{HarsanyiCompleteness} $\Sigma_H$ is sound and complete with respect to the
class of Harsanyi type spaces.
\end{thm}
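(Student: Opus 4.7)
The plan is to address soundness and completeness separately. For soundness, I would verify that $(4_p)$ and $(5_p)$ hold in any Harsanyi type space. If $T(w)([[\phi]]) \geq r$, then the measurable set $\{w' : T(w')([[\phi]]) \geq r\}$ (which is measurable by Lemma \ref{MainLemma}) contains $[T(w)]$ and so, by the Harsanyi condition $(H)$, has $T(w)$-measure $1$, giving $L_1 L_r \phi$; the case of $(5_p)$ is symmetric, using that $\{w' : T(w')([[\phi]]) < r\}$ also includes $[T(w)]$ whenever $T(w)([[\phi]]) < r$.

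For completeness, I would build the canonical model $M[\psi]$ for a $\Sigma_H$-consistent formula $\psi$ in essentially the same way as for $\Sigma_+$ (Theorem \ref{Completeness}), but with one crucial refinement. Call two atoms $\Gamma, \Gamma' \in \Omega[\psi]$ \emph{belief-equivalent} if they have the same probability part (equivalently, share $\gamma^{>0}$). Because belief-equivalent atoms agree on all $L_r\phi$ formulas in $\mathcal{L}[\psi]$, the values $\alpha_\phi^\Gamma$ and $\beta_\phi^\Gamma$ coincide across a class, so the maximally consistent extensions $\Gamma^\infty$ furnished by Theorem \ref{Crucial} can be chosen uniformly on each belief-equivalence class. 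Under this uniform choice, $T_\psi(\Gamma)$ depends only on the belief-equivalence class of $\Gamma$, and the Truth Lemma (Lemma \ref{truth}) still carries over verbatim, since only the properties stated in Theorem \ref{Crucial} are ever used.

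It remains to verify that this $M[\psi]$ is a Harsanyi type space, i.e., $T_\psi(\Gamma)(E) = 1$ whenever $[T_\psi(\Gamma)] \subseteq E$. By Corollary \ref{CertainProbability}, for each atom $\Gamma$ with statement $\gamma$ we have $L_1 \gamma^{>0} \in \Gamma$, so the Truth Lemma yields $T_\psi(\Gamma)([\gamma^{>0}]) = 1$. The uniform choice of extensions then gives $[T_\psi(\Gamma)] = [\gamma^{>0}]$: the inclusion $\subseteq$ is automatic, because $T_\psi(\Gamma') = T_\psi(\Gamma)$ forces agreement on all $L_r\phi$ facts in the finite language and hence on probability parts; the reverse inclusion $\supseteq$ is precisely what the uniform choice secures. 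With this identification, monotonicity of the finite measure $T_\psi(\Gamma)$ delivers $T_\psi(\Gamma)(E) = 1$ for every $E \supseteq [T_\psi(\Gamma)]$, and the consistent formula $\psi$ is realized at some state of a Harsanyi type space.

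The main obstacle is the coordinated choice of extensions described above. Without it, one only obtains $[T_\psi(\Gamma)] \subseteq [\gamma^{>0}]$, since the extension $\Gamma^\infty$ may assign an irrational value in $(\alpha_\phi^\Gamma, \beta_\phi^\Gamma)$ to $\phi$ when those bounds do not coincide, and two independent extensions of belief-equivalent atoms could produce genuinely different measures, leaving an atom $\Gamma' \in [\gamma^{>0}] \setminus [T_\psi(\Gamma)]$ of possibly positive $T_\psi(\Gamma)$-measure. The uniform choice is exactly what closes this gap so that Corollary \ref{CertainProbability} can deliver $(H)$.
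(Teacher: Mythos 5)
Your proposal is correct and follows essentially the same route as the paper: the paper likewise partitions the atoms of $\Omega_H[\psi]$ into groups sharing the same probability part, assigns to every atom in a group the measure induced by one representative's extension from Theorem \ref{Crucial}, and uses Corollary \ref{CertainProbability} to conclude that each group $G_k=[\gamma_{i_k}^{>0}]$ receives measure $1$, which yields the Harsanyi condition; the Truth Lemma then carries over as in $\Sigma_+$. Your "uniform choice of extensions" is exactly the paper's representative device (stated there as a definition $T_\psi(\Gamma):=T_\psi(\Gamma_{i_k})$ rather than as a coordinated choice of the extensions themselves, since belief-equivalent atoms with different propositional parts cannot share a literal extension), and you correctly identify the inclusion $G_k\subseteq[T_\psi(\Gamma)]$ as the point this device secures.
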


\noindent The soundness of the system is clear.   We concentrate on
the completeness part.  In Heifetz and Mongin \cite{HM01}, they gave a proof
sketch of the completeness proof. For the completeness of our presentation,  here we give a detailed proof whose style is different from that in \cite{HM01}.
  Assume that $\psi$ is
consistent. We need to show that it is satisfiable in a Harsanyi
type space. Just as in the proof of the completeness of $\Sigma_+$ (Theorem \ref{Completeness}),
we define a local language $\mathcal{L}[\psi]$.   It gives rise to
a set $\Omega_H[\psi]$ which consists of all maximal $\Sigma_H$-consistent sets, whose elements are also called
\emph{atoms} in $\mathcal{L}[\psi]$ whenever no confusion arises.


\begin{proof} Enumerate all the atoms in $\Omega_H[\psi]$:

\begin{center}
    $\Gamma_1, \Gamma_2, \cdots, \Gamma_{N[\psi]}$.

\end{center}

\noindent The conjunction of all formulas in each atom $\Gamma_i\in \Omega_H[\psi]$ is denoted as $\gamma_i$.  It has a
normal form or a statement $\gamma_i^{=0}\wedge \phi_i^{>0}$ where $\gamma_i^{=0}$ is
the propositional part and $\gamma_i^{>0}$ is the probability part.




   We divide the set of atoms in $\Omega_H[\psi]$ into
several groups $(G_k)_{k=1}^K$ according to their probability parts:

%



\begin{itemize}

    \item all the atoms in each group have the same probability
    parts;

    \item atoms in different groups have different probability
    parts.

\end{itemize}
Next we define probability measures at all atoms according
to their representatives at each group.  Pick up a representative $\Gamma_{i_k}$
from each group $G_k$.  We know from Theorem \ref{Crucial} that $\Gamma_{i_k}$ has a maximal consistent extension $\Gamma_{i_k}^{\infty}$ in
$\mathcal{L}$ such that
    \begin{itemize}
        \item $\Gamma_{i_k}\subseteq \Gamma_{i_k}^{\infty}$;
        \item   For any formula $\phi\in \mathcal{L}[\psi]$,
                    \begin{enumerate}
                        \item if $\alpha_{\phi}^{\Gamma_{i_k}} =\beta_{\phi}^{\Gamma_{i_k}}$, then $\alpha_{\phi}^{\Gamma_{i_k}^{\infty}} =
                            \alpha_{\phi}^{\Gamma_{i_k}}$;
                        \item if $\alpha_{\phi}^{\Gamma_{i_k}} < \beta_{\phi}^{\Gamma_{i_k}}$, then $\alpha_{\phi}^{\Gamma_{i_k}}<
                            \alpha_{\phi}^{\Gamma_{i_k}^{\infty}} < \beta_{\phi}^{\Gamma_{i_k}}$.

                    \end{enumerate}

    \end{itemize}\medskip

\noindent Let $T_{\psi}(\Gamma_{i_k})$ be the probability measure on $\langle \Omega_H[\psi], 2^{\Omega_H[\psi]}\rangle$ according to $\Gamma_{i_k}^{\infty}$ defined by

                    \begin{center} for any formula $\phi$ in $\mathcal{L}[\psi]$,   $T_{\psi}(\Gamma_{i_k})([\phi]) = \alpha_{\phi}^{\Gamma_{i_k}^{\infty}}$.
                    \end{center}

 For each $\Gamma\in G_k$, define $T_{\psi}(\Gamma): = T_{\psi}(\Gamma_{i_k})$. That is to say, the probability measures at atoms in each group $G_k$ are defined to be the same as the probability measure at the representative $\Gamma_{i_k}$.  The canonical model in $\mathcal{L}[\psi]$ for $\Sigma_H$ is $M_H[\psi]:=\langle \Omega_H[\psi], 2^{\Omega_H[\psi]}, T_{\psi}, V_{\psi}\rangle$ where $V_{\psi}(p) = [p]$ for any propositional letter $p$.

\begin{clm}  $T_{\psi}(\Gamma_{i_k})(G_k) =1$

\end{clm}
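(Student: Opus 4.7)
The plan is to exploit Corollary~\ref{CertainProbability} to show that the probability part of the representative atom gets measure one, and then identify the set of atoms satisfying this probability part with the group $G_k$.

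First, I would observe that since $\gamma_{i_k}^{>0}$ is, by construction, the conjunction of all $L_r\phi$ (resp.\ $\neg L_r\phi$) that belong to (resp.\ are absent from) $\Gamma_{i_k}$, it is a formula of $\mathcal{L}[\psi]$ and it lies in $\Gamma_{i_k}$. By Corollary~\ref{CertainProbability}, $\vdash_{\Sigma_H} \gamma_{i_k}^{>0} \rightarrow L_1(\gamma_{i_k}^{>0})$, so by maximal consistency $L_1(\gamma_{i_k}^{>0}) \in \Gamma_{i_k}$, and hence $L_1(\gamma_{i_k}^{>0}) \in \Gamma_{i_k}^{\infty}$. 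Therefore $\alpha_{\gamma_{i_k}^{>0}}^{\Gamma_{i_k}^{\infty}} = 1$, and by the definition of $T_{\psi}(\Gamma_{i_k})$ we obtain
\[
T_{\psi}(\Gamma_{i_k})\bigl([\gamma_{i_k}^{>0}]\bigr) = \alpha_{\gamma_{i_k}^{>0}}^{\Gamma_{i_k}^{\infty}} = 1.
\]

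Next I would verify the set-theoretic identity $[\gamma_{i_k}^{>0}] = G_k$. If $\Gamma \in [\gamma_{i_k}^{>0}]$, then every $L_r\phi$ appearing in $\gamma_{i_k}^{>0}$ belongs to $\Gamma$, and every $\neg L_r\phi$ appearing in $\gamma_{i_k}^{>0}$ belongs to $\Gamma$; since $\Gamma$ is maximal in $\mathcal{L}[\psi]$, the probability part of $\Gamma$ coincides with $\gamma_{i_k}^{>0}$, so $\Gamma \in G_k$. Conversely, any $\Gamma \in G_k$ has probability part $\gamma_{i_k}^{>0}$ and therefore contains $\gamma_{i_k}^{>0}$, i.e., $\Gamma \in [\gamma_{i_k}^{>0}]$. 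Combining this with the previous paragraph yields $T_{\psi}(\Gamma_{i_k})(G_k) = 1$.

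The proof is essentially a one-step unpacking of Corollary~\ref{CertainProbability}, so there is no deep obstacle here; the only care needed is the bookkeeping check that the syntactic object $\gamma_{i_k}^{>0}$ has $[\gamma_{i_k}^{>0}]$ as its denotation precisely equal to the group $G_k$, which in turn rests on the fact that the normal-form decomposition makes the probability part an exhaustive description of all probability formulas in $\mathcal{L}[\psi]$ (so that sharing it is equivalent to containing it). This is the same bookkeeping that justifies the partition $(G_k)_{k=1}^K$ in the first place.
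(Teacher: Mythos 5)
Your proof is correct and follows essentially the same route as the paper: both derive $L_1(\gamma_{i_k}^{>0})\in\Gamma_{i_k}^{\infty}$ from Corollary~\ref{CertainProbability} and the maximal consistent extension, conclude $\alpha_{\gamma_{i_k}^{>0}}^{\Gamma_{i_k}^{\infty}}=1$, and identify $G_k$ with $[\gamma_{i_k}^{>0}]$. The only difference is that you spell out the set-theoretic identity $G_k=[\gamma_{i_k}^{>0}]$, which the paper states as an observation.
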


\noindent  Observe that $G_k = [\gamma_{i_k}^{>0}]$, the set of states in $M_H[\psi]$ containing $\gamma_{i_k}^{>0}$.  Since
$\Gamma^{\infty}_{i_k}$ is a maximal consistent extension
of $\Gamma_{i_k}$ and $\gamma^{i_k}_{>0}\rightarrow L_1(\gamma^{i_k}_{>0})\in \Sigma_H$ (Corollary \ref{CertainProbability}), $L_1(\gamma_{i_k}^{>0})\in
\Gamma^{\infty}_{i_k}$.  So $T_{\psi}(\Gamma_{i_k})(G_k)
=T_{\psi}(\Gamma_{i_k})([\gamma_{i_k}^{>0}])=
\alpha_{\gamma_{i_k}^{>0}}^{\Gamma^{\infty}_{i_k}} =1$.\\

This implies that the above defined canonical model $M_H[\psi]$ is indeed based on a Harsanyi type space.

\begin{clm} For any formula $\phi$ in $\mathcal{L}[\psi]$ and any atom
$\Xi\in \Omega_H[\psi]$,  $M_H[\psi], \Xi \models \phi$ iff $\phi\in \Xi$. Equivalently,
$[\phi]=[[\phi]]$.

\end{clm}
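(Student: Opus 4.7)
The plan is to prove the claim by induction on the structure of $\phi$, with the modal case $\phi=L_r\psi'$ being the only nontrivial step. The propositional-letter case is immediate from the definition $V_{\psi}(p)=[p]$, and the Boolean cases are standard. By the induction hypothesis we may assume $[[\psi']]=[\psi']$ for the formula $\psi'$ of smaller depth.

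Fix an atom $\Xi\in\Omega_H[\psi]$, and let $G_k$ be the group containing $\Xi$ with representative $\Gamma_{i_k}$. Because all atoms in $G_k$ share the same probability part $\gamma_{i_k}^{>0}$, and since the membership $L_r\psi'\in\Xi$ is determined entirely by the probability part of $\Xi$, we have $L_r\psi'\in\Xi$ iff $L_r\psi'\in\Gamma_{i_k}$. By construction $T_{\psi}(\Xi)=T_{\psi}(\Gamma_{i_k})$, so what must be shown is
\begin{center}
$T_{\psi}(\Gamma_{i_k})([\psi'])\geq r \iff L_r\psi'\in\Gamma_{i_k}.$
\end{center}
By definition, $T_{\psi}(\Gamma_{i_k})([\psi'])=\alpha_{\psi'}^{\Gamma_{i_k}^{\infty}}$, so the task reduces to relating this supremum (taken over $\mathbb{Q}$ in the full language) to membership of $L_r\psi'$ in the \emph{local} atom $\Gamma_{i_k}$.

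For the forward direction, suppose $L_r\psi'\in\Gamma_{i_k}$. Then $r\leq\alpha_{\psi'}^{\Gamma_{i_k}}$, and by Theorem~\ref{Crucial} we have $\alpha_{\psi'}^{\Gamma_{i_k}^{\infty}}\geq\alpha_{\psi'}^{\Gamma_{i_k}}\geq r$. For the converse, suppose $L_r\psi'\notin\Gamma_{i_k}$; then by maximality $\neg L_r\psi'\in\Gamma_{i_k}$, so $\alpha_{\psi'}^{\Gamma_{i_k}}<r$, i.e.\ $\alpha_{\psi'}^{\Gamma_{i_k}}\leq r-1/q_\psi$. By Lemma~\ref{density}, $\beta_{\psi'}^{\Gamma_{i_k}}\in\{\alpha_{\psi'}^{\Gamma_{i_k}},\alpha_{\psi'}^{\Gamma_{i_k}}+1/q_\psi\}$, hence $\beta_{\psi'}^{\Gamma_{i_k}}\leq r$. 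Theorem~\ref{Crucial} then yields $\alpha_{\psi'}^{\Gamma_{i_k}^{\infty}}<r$ in both sub-cases (strictly less if $\alpha<\beta$, and equal to $\alpha_{\psi'}^{\Gamma_{i_k}}<r$ if $\alpha=\beta$). Thus $T_{\psi}(\Gamma_{i_k})([\psi'])<r$, completing the equivalence.

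The main obstacle is the subtle interplay among three layers of information: the local atom $\Gamma_{i_k}$ in the finite language $\mathcal{L}[\psi]$, its infinitary maximal extension $\Gamma_{i_k}^{\infty}$ in $\mathcal{L}$ (which alone delivers an honest $\sigma$-additive measure via the Archimedean rule), and the representative choice within each group $G_k$ (which is what makes the model Harsanyi). The critical point that must be checked—and which justifies defining $T_\psi$ by representatives rather than atom-by-atom—is that $L_r\psi'$-membership is a function of the probability part alone, so swapping $\Xi$ for $\Gamma_{i_k}$ does not change which $L_r\psi'$-formulas belong. Once this is recognized, the rest is a bookkeeping argument using Lemma~\ref{density} and Theorem~\ref{Crucial} to bridge the gap between the discrete indices of $\mathcal{L}[\psi]$ and the real-valued $\alpha_{\psi'}^{\Gamma_{i_k}^{\infty}}$.
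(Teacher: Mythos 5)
Your proof is correct and follows essentially the same route as the paper, which simply defers to the truth lemma for $\Sigma_+$ (Lemma \ref{truth}) and, in its detailed incarnation in the proof of Theorem \ref{ConservationDepth}, uses exactly your bookkeeping with $\alpha_{\psi'}^{\Gamma}$, $\beta_{\psi'}^{\Gamma}$, Lemma \ref{density} and Theorem \ref{Crucial} for the modal case. Your one addition --- explicitly checking that $L_r\psi'$-membership depends only on the probability part, so replacing $\Xi$ by its group representative $\Gamma_{i_k}$ is harmless --- is left implicit in the paper but is indeed the point that makes the representative-based definition of $T_\psi$ compatible with the truth lemma.
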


The proof of this claim is similar to that of Lemma \ref{truth} (or Lemma 3.17 in \cite{Zhou09}).
It follows immediately that $\psi$ is satisfiable in a Harsanyi type space.
\end{proof}

Just as $\Omega(q,d, w)$ in last section about $\Sigma_+$, $\Omega_H(q, d, w)$ denotes the collection of
maximally $\Sigma_H$-consistent sets of formulas in $\mathcal{L}(q, d, w)$. $M_H(q, d, w)$ denotes the canonical Harsanyi type on $\Omega_H(q, d, w)$. In the remainder of
this section, we will compute the cardinality of this space.

\subsection{Denesting Property and Unique Extension Theorem}

First we show a denesting property for $\Sigma_H$, which says that each \emph{normal} formula (which will be defined below) is equivalent to a formula of depth $\leq 1$.
 Next we prove a unique extension theorem in $\Sigma_H$ that each maximally consistent set of formulas in a finite local language has only one maximal consistent extension in this finite language extended by increasing the depth by 1. Our denesting property and unique extension theorem demonstrate an important property about the probability logic $\Sigma_H$ for Harsanyi type spaces which is  parallel to but weaker than the well-known denesting property in $S_5$ that any formula in $S_5$ is equivalent to a formula of depth 1 (Appendix in \cite{Aum99a}).

\begin{lem} The proposition consists of four parts:

\begin{enumerate}

\item  $\vdash_{\Sigma_H} L_s\psi \wedge L_r\phi\rightarrow L_r(\phi
\wedge L_s\psi)$

\item  $\vdash_{\Sigma_H} L_s\psi \wedge \neg L_r\phi\rightarrow \neg L_r(\phi
\wedge L_s\psi)$

\item  $\vdash_{\Sigma_H} \neg L_s\psi \wedge L_r\phi\rightarrow L_r(\phi
\wedge \neg L_s\psi)$

\item  $\vdash_{\Sigma_H} \neg L_s\psi \wedge \neg L_r\phi\rightarrow \neg L_r(\phi
\wedge \neg L_s\psi)$

\end{enumerate}
\end{lem}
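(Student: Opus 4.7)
The plan is to derive all four parts uniformly by applying the introspection axioms $(4_p)$ and $(5_p)$ to push the inner modal conjunct $L_s\psi$ (resp. $\neg L_s\psi$) inside a certainty operator $L_1$, and then invoking Corollary~\ref{PlusOne}, which already tells us how $L_1\chi$ distributes over $L_r$ and $\neg L_r$ when conjoined with an arbitrary formula.

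For part (1), I would first apply axiom $(4_p)$ to get $L_s\psi \to L_1 L_s\psi$ inside $\Sigma_H$, so the antecedent $L_s\psi \wedge L_r\phi$ implies $L_1 L_s\psi \wedge L_r\phi$. Then Corollary~\ref{PlusOne}(1), applied with the ``$\phi$'' slot filled by $L_s\psi$ and the ``$\psi$'' slot filled by $\phi$, gives $L_1 L_s\psi \wedge L_r\phi \to L_r(L_s\psi \wedge \phi)$, and commutativity of $\wedge$ together with (DIS) yields $L_r(\phi \wedge L_s\psi)$. Part (2) is analogous: from $(4_p)$ we still get $L_1 L_s\psi$, and then the second half of Corollary~\ref{PlusOne}(1) converts $L_1 L_s\psi \wedge \neg L_r\phi$ into $\neg L_r(L_s\psi \wedge \phi)$.

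Parts (3) and (4) run in exactly the same way, except that the introspection step now uses axiom $(5_p)$: $\neg L_s\psi \to L_1 \neg L_s\psi$. Once the conjunct $\neg L_s\psi$ has been promoted to $L_1 \neg L_s\psi$, Corollary~\ref{PlusOne}(1) again discharges the rest — the first half for the $L_r$ case (part (3)) and the second half for the $\neg L_r$ case (part (4)) — producing $L_r(\neg L_s\psi \wedge \phi)$ and $\neg L_r(\neg L_s\psi \wedge \phi)$, respectively.

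There is essentially no obstacle here: the only ingredients are the two new Harsanyi axioms $(4_p)$ and $(5_p)$, the already-proved Corollary~\ref{PlusOne}, and a harmless use of (DIS) to commute the conjunction inside the scope of $L_r$. The content of the lemma is precisely that under the introspection axioms every \emph{probability subformula} $L_s\psi$ or $\neg L_s\psi$ behaves, with respect to being pulled inside an outer $L_r$, like a certainty ($L_1$-true) formula — which is exactly what $(4_p)$ and $(5_p)$ assert and exactly what Corollary~\ref{PlusOne} then exploits. This is the technical engine that will later let every ``probability part'' of a statement be absorbed into the argument of $L_r$ to yield the denesting property.
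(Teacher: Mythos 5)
Your proposal is correct and follows exactly the paper's own argument: promote the conjunct $L_s\psi$ (resp.\ $\neg L_s\psi$) to $L_1 L_s\psi$ (resp.\ $L_1\neg L_s\psi$) via $(4_p)$ (resp.\ $(5_p)$), then apply the appropriate half of Corollary~\ref{PlusOne}, with (DIS) handling the commutation of the conjunction. The paper's proof writes out only part (1) and notes that the other parts are analogous, just as you do.
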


\begin{proof}   Reason
inside $\Sigma_{H}$:

\begin{eqnarray}
L_r\phi\wedge L_s\psi & \rightarrow & L_r\phi \wedge  L_1(L_s\psi)
\nonumber\\
& \rightarrow &L_r(\phi\wedge L_s \psi) \qquad \text{(Corollary \ref{PlusOne})}\nonumber\\
\nonumber
\end{eqnarray}
Similarly, other parts follow from Corollary \ref{PlusOne}.
\end{proof}

\begin{thm}  The proposition consists of two parts:

\begin{enumerate}

\item  $\vdash_{\Sigma_H} (\bigwedge_{i=1}^m L_{s_i}\psi_i \wedge
\bigwedge_{j=1}^n \neg L_{t_j}\psi_j')\wedge L_r\phi\rightarrow
L_r(\phi \wedge (\bigwedge_{i=1}^m L_{s_i}\psi_i \wedge
\bigwedge_{j=1}^n \neg L_{t_j}\psi_j'))$

\item $\vdash_{\Sigma_H} (\bigwedge_{i=1}^m L_{s_i}\psi_i \wedge
\bigwedge_{j=1}^n \neg L_{t_j}\psi_j')\wedge \neg L_r\phi\rightarrow
\neg L_r(\phi \wedge (\bigwedge_{i=1}^m L_{s_i}\psi_i \wedge
\bigwedge_{j=1}^n \neg L_{t_j}\psi_j'))$

\end{enumerate}

\end{thm}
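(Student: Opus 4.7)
The plan is to generalize the preceding lemma (which handles a single conjunct of the form $L_s\psi$ or $\neg L_s\psi$) to an arbitrary finite conjunction. Write $\Xi := \bigwedge_{i=1}^m L_{s_i}\psi_i \wedge \bigwedge_{j=1}^n \neg L_{t_j}\psi_j'$ and split the argument into two stages: first establish that $\vdash_{\Sigma_H} \Xi \rightarrow L_1\Xi$ (the ``probability part'' is believed by the agent with probability one), and then combine this with Corollary \ref{PlusOne} to push the conjunct $\Xi$ inside $L_r$ (respectively, inside $\neg L_r$).

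For the first stage, axiom $(4_p)$ gives $L_{s_i}\psi_i \rightarrow L_1 L_{s_i}\psi_i$ for each $i$, and axiom $(5_p)$ gives $\neg L_{t_j}\psi_j' \rightarrow L_1 \neg L_{t_j}\psi_j'$ for each $j$. Hence $\Xi$ propositionally implies the stronger conjunction $\bigwedge_{i=1}^m L_1 L_{s_i}\psi_i \wedge \bigwedge_{j=1}^n L_1 \neg L_{t_j}\psi_j'$. Next, specializing Corollary \ref{PlusOne}(1) at $r=1$ yields $L_1\alpha \wedge L_1\beta \rightarrow L_1(\alpha\wedge\beta)$, and a straightforward induction on $m+n$ collapses the outer $L_1$'s into a single $L_1$ enclosing the entire $\Xi$. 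This establishes $\vdash_{\Sigma_H} \Xi \rightarrow L_1\Xi$, which is really the same phenomenon as Corollary \ref{CertainProbability} but applied to an arbitrary conjunction of modalized formulas rather than only to the probability part of an atom.

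For the second stage, from $\Xi \wedge L_r\phi$ we get $L_1\Xi \wedge L_r\phi$ by the first stage, and then Corollary \ref{PlusOne}(1) (first conjunct) yields $L_r(\Xi \wedge \phi)$; applying (DIS) together with the propositional commutativity of $\wedge$ gives $L_r(\phi\wedge \Xi)$, which is exactly part (1). For part (2), from $\Xi\wedge \neg L_r\phi$ we again deduce $L_1\Xi \wedge \neg L_r\phi$, and Corollary \ref{PlusOne}(1) (second conjunct, $L_1\alpha \wedge \neg L_r\phi \rightarrow \neg L_r(\alpha\wedge\phi)$) together with (DIS) gives $\neg L_r(\phi \wedge \Xi)$.

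The only point that requires any care is the bookkeeping in the induction of stage one: at each step we must keep the formula being ``certified by $L_1$'' of the correct shape so that Corollary \ref{PlusOne}(1) still applies. This is routine because the combining principle $L_1\alpha \wedge L_1\beta \rightarrow L_1(\alpha\wedge\beta)$ is symmetric in $\alpha$ and $\beta$, so we may absorb conjuncts one at a time from the left, with the accumulated conjunction playing the role of $\alpha$ and each new $L_1$-prefixed modalized formula playing the role of $\beta$. Once stage one is in hand, stage two is a direct application of machinery already proved, so no further obstacle arises.
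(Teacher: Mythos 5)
Your proposal is correct and follows essentially the same route as the paper: the paper's proof also rests on the single fact $\vdash_{\Sigma_H}\Xi\rightarrow L_1\Xi$ (your stage one, obtained from $(4_p)$, $(5_p)$ and iterated use of Corollary~\ref{PlusOne}, exactly as in Lemma~\ref{WedgeOne} and Corollary~\ref{CertainProbability}) and then pushes $\Xi$ inside $L_r$ or $\neg L_r$ via Corollary~\ref{PlusOne}. Your write-up merely spells out the induction that the paper leaves implicit.
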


\proof This proposition follows directly from the above lemma
according to the following fact:

\begin{center} $\vdash_{\Sigma_H}(\bigwedge_{i=1}^m L_{s_i}\psi_i \wedge
\bigwedge_{j=1}^n \neg L_{t_j}\psi_j')\rightarrow L_1
(\bigwedge_{i=1}^m L_{s_i}\psi_i \wedge \bigwedge_{j=1}^n \neg
L_{t_j}\psi_j')$
\end{center}\vspace{-18 pt}

\qed

\begin{lem}\label{VeeOne} $\vdash_{\Sigma_H}(L_r\phi\vee L_s \psi)\rightarrow L_1(L_r\phi\vee L_s
\psi)$

\end{lem}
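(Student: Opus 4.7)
The plan is to reduce this to axiom $(4_p)$ together with the monotonicity of $L_1$. The key observation is that each disjunct $L_r\phi$ and $L_s\psi$ already individually implies $L_1$ applied to itself (by $(4_p)$), and $L_1$ is monotone with respect to provable implications (Proposition \ref{basictheorem2}, part~1).

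Concretely, I would argue as follows. From axiom $(4_p)$ we have $\vdash_{\Sigma_H} L_r\phi \rightarrow L_1 L_r\phi$. Since $\vdash L_r\phi \rightarrow (L_r\phi \vee L_s\psi)$ by propositional calculus, monotonicity of $L_1$ gives $\vdash_{\Sigma_H} L_1 L_r\phi \rightarrow L_1(L_r\phi \vee L_s\psi)$. Chaining these, $\vdash_{\Sigma_H} L_r\phi \rightarrow L_1(L_r\phi \vee L_s\psi)$. By an entirely symmetric argument using $(4_p)$ applied to $L_s\psi$, we get $\vdash_{\Sigma_H} L_s\psi \rightarrow L_1(L_r\phi \vee L_s\psi)$. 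Combining these two implications via propositional reasoning yields the desired $\vdash_{\Sigma_H}(L_r\phi \vee L_s\psi) \rightarrow L_1(L_r\phi \vee L_s\psi)$.

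There is no real obstacle here; unlike Lemma \ref{WedgeOne}, which needed Corollary \ref{PlusOne} to combine two independent $L_1$-certainties into $L_1$ of a conjunction, the disjunctive version is strictly easier because monotonicity alone suffices to push $L_1$ inside a disjunction from either disjunct. The only thing to be mindful of is that we are not claiming $L_1$ distributes over $\vee$ in general; we are only using the weaker one-way implication $L_1\alpha \rightarrow L_1(\alpha \vee \beta)$, which is immediate from monotonicity.
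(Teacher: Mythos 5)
Your proof is correct and is essentially identical to the paper's own argument: the paper likewise applies $(4_p)$ to each disjunct, pushes $L_1$ through the disjunction by monotonicity (Proposition \ref{basictheorem2}), and combines the two implications propositionally. No further comment is needed.
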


\proof  Reason inside $\Sigma_{H}$:

  \begin{eqnarray}
  L_r \phi & \rightarrow & L_1 L_r\phi\nonumber\\
           & \rightarrow & L_1 (L_r\phi \vee L_s\psi)\nonumber\\
  L_s \psi & \rightarrow & L_1 (L_r\phi \vee L_s\psi)\nonumber\\
  L_r\phi \vee L_s\psi & \rightarrow & L_1 (L_r \phi \vee L_s\psi)\nonumber
  \nonumber \end{eqnarray}\vspace{-30 pt}

\qed

\begin{thm} \label{PosIntro} If $\phi$ is a Boolean combination of formulas of the
form $L_r\psi$, then $\phi\rightarrow L_1\phi$ is a theorem in
$\Sigma_H$.
\end{thm}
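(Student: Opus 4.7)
The plan is to proceed by structural induction on the Boolean combination. To avoid having to deal directly with the negation case in the induction step, I will first use De Morgan's laws to push all negations down to the atomic level, so that without loss of generality $\phi$ is built from ``literals'' of the form $L_r\psi$ or $\neg L_r\psi$ using only $\wedge$ and $\vee$; equivalently, I will put $\phi$ into disjunctive normal form. This reduction is purely propositional and the rule (DIS) (combined with propositional equivalence) lets us replace $\phi$ by any tautologically equivalent formula when checking the target implication.

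For the base cases, a literal $L_r\psi$ satisfies the target by axiom $(4_p)$, and a literal $\neg L_r\psi$ satisfies it by axiom $(5_p)$. For the induction step on $\wedge$: assuming $\vdash_{\Sigma_H}\phi_1\rightarrow L_1\phi_1$ and $\vdash_{\Sigma_H}\phi_2\rightarrow L_1\phi_2$, I chain
\begin{equation*}
\phi_1\wedge\phi_2 \;\rightarrow\; L_1\phi_1\wedge L_1\phi_2 \;\rightarrow\; L_1(\phi_1\wedge\phi_2),
\end{equation*}
where the second arrow is the $r=1$ instance of Corollary \ref{PlusOne}(1). For the induction step on $\vee$, I chain
\begin{equation*}
\phi_1\vee\phi_2 \;\rightarrow\; L_1\phi_1\vee L_1\phi_2 \;\rightarrow\; L_1(\phi_1\vee\phi_2),
\end{equation*}
where the last arrow uses monotonicity of $L_1$ (Proposition \ref{basictheorem2}(1)) applied to the two disjuncts $\phi_i\rightarrow \phi_1\vee\phi_2$ and then combined by propositional reasoning. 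Alternatively, Lemma \ref{VeeOne} already disposes of the binary case for two $L$-literals and its proof generalises verbatim.

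I expect the main obstacle to be purely bookkeeping: one has to be slightly careful that the ``reduce to DNF first'' move is legitimate, i.e.\ that if $\phi\equiv\phi'$ propositionally and $\vdash_{\Sigma_H}\phi'\rightarrow L_1\phi'$, then $\vdash_{\Sigma_H}\phi\rightarrow L_1\phi$; this follows because propositional equivalence gives $\vdash\phi\leftrightarrow\phi'$, and then (DIS) yields $\vdash L_1\phi\leftrightarrow L_1\phi'$, so the two implications are provably equivalent. Once this is noted, the proof is a short induction whose only real ingredients are $(4_p)$, $(5_p)$, Corollary \ref{PlusOne}(1), and monotonicity of $L_1$.
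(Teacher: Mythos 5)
Your proof is correct and follows essentially the same route as the paper: reduce $\phi$ to disjunctive normal form, close conjunctions of $L$-literals under $L_1$ via $(4_p)$, $(5_p)$ and Corollary~\ref{PlusOne} (this is the paper's Lemma~\ref{WedgeOne}), and then pass $L_1$ through the disjunction by monotonicity (the paper's Lemma~\ref{VeeOne}). Your explicit structural induction and the (DIS)-based justification of the DNF replacement merely spell out steps the paper leaves implicit.
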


\proof  Assume that $\phi$ is a Boolean combination of
formulas of the form $L_r\gamma$ and its disjunctive normal form is
$\bigvee_{i=1}^I \bigwedge_{j=1}^{k_i}  \pi^i_jL_{r^i_j} \phi^i_j$
where $\pi^i_j$ is either blank or $\neg$.   Reason inside
$\Sigma_H$:

\begin{eqnarray}
\bigvee_{i=1}^I \bigwedge_{j=1}^{k_i}  \pi^i_jL_{r^i_j} \phi^i_j &
\rightarrow & \bigvee_{i=1}^I L_1(\bigwedge_{j=1}^{k_i}
\pi^i_jL_{r^i_j} \phi^i_j)\qquad \text{(Corollary \ref{WedgeOne})}\nonumber\\
& \rightarrow & L_1(\bigvee_{i=1}^I \bigwedge_{j=1}^{k_i}
\pi^i_jL_{r^i_j} \phi^i_j)\qquad \text{(Lemma \ref{VeeOne})}\nonumber
\end{eqnarray}\vspace{-33 pt}

\qed

\begin{thm} \label{denesting1} If $\phi$ is a Boolean combination of formulas of the
form $L_r\psi$, then, for any $r>0$,  $\phi\leftrightarrow L_r\phi$ is a theorem in
$\Sigma_H$.
\end{thm}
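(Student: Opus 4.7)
The proposal is to split the biconditional into two implications and reduce each to results already proved.

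For the forward direction $\phi \rightarrow L_r\phi$ with $r > 0$, I would apply Theorem~\ref{PosIntro} directly to $\phi$ to get $\phi \rightarrow L_1\phi$, and then invoke Proposition~\ref{basictheorem2}(2), which gives $L_1\phi \rightarrow L_r\phi$ since $1 \geq r$. Chaining these yields the forward implication for every $r \in [0,1]$ (the hypothesis $r > 0$ is not needed here).

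For the reverse direction $L_r\phi \rightarrow \phi$ I would reason contrapositively. Note that if $\phi$ is a Boolean combination of formulas of the form $L_s\psi$, then so is $\neg \phi$, so Theorem~\ref{PosIntro} also applies to $\neg \phi$, yielding $\neg \phi \rightarrow L_1 \neg \phi$. On the other hand, axiom $(A5)$ with $s = 1$ gives $L_r\phi \rightarrow \neg L_1 \neg \phi$ provided $r + 1 > 1$, which is exactly the assumption $r > 0$. Combining these two, $\neg \phi \rightarrow L_1 \neg \phi \rightarrow \neg L_r\phi$, which is the contrapositive of the desired $L_r\phi \rightarrow \phi$.

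Putting the two directions together inside $\Sigma_H$ yields $\phi \leftrightarrow L_r\phi$. The only delicate point is making sure $(A5)$ is the right tool for ruling out $L_r\phi \wedge L_1\neg\phi$; this is where the restriction $r > 0$ enters and why the statement fails at $r = 0$ (where $L_0\phi$ is an axiom and hence trivially implied by anything, so the biconditional would force $\phi$ to be provable). No real obstacle is anticipated beyond correctly invoking Theorem~\ref{PosIntro} on both $\phi$ and $\neg\phi$ and matching indices in $(A5)$ and Proposition~\ref{basictheorem2}(2).
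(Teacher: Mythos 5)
Your proposal is correct and follows essentially the same route as the paper: the forward direction chains Theorem~\ref{PosIntro} with index monotonicity, and the reverse direction argues contrapositively via $\neg\phi \rightarrow L_1\neg\phi$ (Theorem~\ref{PosIntro} applied to $\neg\phi$) followed by the incompatibility of $L_1\neg\phi$ with $L_r\phi$ for $r>0$. The paper phrases that last step through the derived modality $M_0$ (i.e., $L_1\neg\phi \rightarrow M_0\phi \rightarrow \neg L_r\phi$), which is exactly your invocation of $(A5)$ with $s=1$.
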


\proof From the above theorem, it is easy to see that $\phi \rightarrow L_r \phi$ is a theorem in $\Sigma_H$.  For the other direction, we reason inside $\Sigma_H$:

\begin{eqnarray}
\neg \phi &
\rightarrow & L_1 \neg \phi  \qquad \text{(Theorem \ref{PosIntro})}\nonumber\\
& \rightarrow &  M_0  \phi  \nonumber\\
& \rightarrow & \neg L_r  \phi   \nonumber
\end{eqnarray}\vspace{-30 pt}

\qed

\begin{thm} \label{denesting2} If $\psi$ is a Boolean combination of formulas of the
form $L_r\psi'$, then, for any $r>0$ and any formula $\phi$,  $(L_r \phi \wedge \psi) \leftrightarrow L_r(\phi \wedge \psi)$ is a theorem in
$\Sigma_H$.
\end{thm}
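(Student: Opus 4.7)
The plan is to prove the two directions of the biconditional separately, each as a short consequence of the denesting machinery already built up.

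For the forward direction $(L_r\phi \wedge \psi) \rightarrow L_r(\phi \wedge \psi)$, I would use Theorem \ref{PosIntro}: since $\psi$ is a Boolean combination of formulas of the form $L_s\psi'$, it satisfies $\psi \rightarrow L_1\psi$. Hence from $L_r\phi \wedge \psi$ we get $L_r\phi \wedge L_1\psi$. Now Corollary \ref{PlusOne}(1), namely $L_1\alpha \wedge L_r\beta \rightarrow L_r(\alpha \wedge \beta)$, applied with $\alpha := \psi$ and $\beta := \phi$ yields $L_r(\psi \wedge \phi)$, which is $L_r(\phi \wedge \psi)$.

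For the backward direction $L_r(\phi \wedge \psi) \rightarrow (L_r\phi \wedge \psi)$, I would combine two monotonicity inferences. From the tautologies $\phi \wedge \psi \rightarrow \phi$ and $\phi \wedge \psi \rightarrow \psi$, Proposition \ref{basictheorem2}(1) gives $L_r(\phi \wedge \psi) \rightarrow L_r\phi$ and $L_r(\phi \wedge \psi) \rightarrow L_r\psi$. Since $r > 0$ and $\psi$ is a Boolean combination of $L_s$-formulas, Theorem \ref{denesting1} supplies $L_r\psi \leftrightarrow \psi$, so $L_r(\phi \wedge \psi) \rightarrow \psi$. Conjoining the two consequents yields $L_r\phi \wedge \psi$.

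I do not foresee any real obstacle: the result is a direct corollary of the two preceding theorems (Theorem \ref{PosIntro} for positive introspection of Boolean combinations of $L_s$-formulas and Theorem \ref{denesting1} for the absorption $\psi \leftrightarrow L_r\psi$ when $r > 0$), together with the two elementary facts from Proposition \ref{basictheorem2}(1) and Corollary \ref{PlusOne}(1). The hypothesis $r > 0$ is used exactly once, to invoke Theorem \ref{denesting1} in the backward direction; without it, $L_r\psi$ would collapse to $L_0\psi$, which is provable outright and does not recover $\psi$.
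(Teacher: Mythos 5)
Your proposal is correct and matches the paper's own proof essentially step for step: the forward direction via Theorem \ref{PosIntro} followed by Corollary \ref{PlusOne}, and the backward direction via monotonicity (Proposition \ref{basictheorem2}) together with the absorption $L_r\psi\rightarrow\psi$ from Theorem \ref{denesting1}. Your write-up is in fact slightly more careful than the paper's display, which chains $L_r(\phi\wedge\psi)\rightarrow L_r\phi\rightarrow L_r\psi$ in a way that only makes sense when read as two separate monotonicity inferences, exactly as you present them.
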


\proof First we show
    \begin{eqnarray}
        L_r \phi \wedge \psi & \rightarrow & L_r \phi \wedge L_1 \psi \qquad \text{ (Theorem \ref{PosIntro})} \nonumber \\
        & \rightarrow & L_r(\phi \wedge \psi)  \qquad \text{(Corollary \ref{PlusOne})}\nonumber
    \end{eqnarray}
    For the other direction, we reason inside $\Sigma_H$:
    \begin{eqnarray}
    L_r(\phi \wedge \psi) & \rightarrow & L_r \phi \nonumber \\
    & \rightarrow & L_r \psi \nonumber \\
    & \rightarrow & \psi \qquad \text{(Theorem \ref{denesting1})}\nonumber\\
     L_r(\phi \wedge \psi) & \rightarrow & L_r \phi \wedge \psi \nonumber
 \end{eqnarray}\vspace{-30 pt}

\qed

\begin{thm}\label{denesting3} If $\psi$ is a Boolean combination of formulas of the
form $L_r\psi'$, then, for any $r>0$ and and formula $\phi$,  $(L_r \phi \vee \psi) \leftrightarrow L_r(\phi \vee \psi)$ is a theorem in
$\Sigma_H$.
\end{thm}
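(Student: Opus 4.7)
The plan is to prove both directions separately, mirroring the structure of Theorem \ref{denesting2} but using the fact that $\psi$ being a Boolean combination of $L_r$-formulas is closed under negation, so that $\neg\psi$ enjoys the same positive introspection property $\neg\psi\rightarrow L_1\neg\psi$ supplied by Theorem \ref{PosIntro}.

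For the forward direction $L_r\phi\vee\psi\rightarrow L_r(\phi\vee\psi)$, I would argue inside $\Sigma_H$: the disjunct $L_r\phi$ yields $L_r(\phi\vee\psi)$ by monotonicity of $L_r$ (Proposition \ref{basictheorem2}, part 1), while the disjunct $\psi$ yields $L_r\psi$ by Theorem \ref{denesting1} (using $r>0$), which in turn gives $L_r(\phi\vee\psi)$ again by monotonicity. Combining the two cases disposes of this direction.

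For the reverse direction $L_r(\phi\vee\psi)\rightarrow L_r\phi\vee\psi$, the cleanest route is via the contrapositive: assume $\neg L_r\phi\wedge\neg\psi$ and derive $\neg L_r(\phi\vee\psi)$. Since $\neg\psi$ is still a Boolean combination of formulas of the form $L_{r'}\psi'$, Theorem \ref{PosIntro} gives $\neg\psi\rightarrow L_1\neg\psi$. Then Corollary \ref{PlusOne}(1) yields $L_1\neg\psi\wedge L_r(\phi\vee\psi)\rightarrow L_r(\neg\psi\wedge(\phi\vee\psi))$, and since $\neg\psi\wedge(\phi\vee\psi)$ is propositionally equivalent to $\neg\psi\wedge\phi$, rule (DIS) lets us rewrite the right-hand side as $L_r(\neg\psi\wedge\phi)$, which by monotonicity implies $L_r\phi$. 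Contrapositively, $L_1\neg\psi\wedge\neg L_r\phi\rightarrow\neg L_r(\phi\vee\psi)$, and composing with $\neg\psi\rightarrow L_1\neg\psi$ finishes the argument.

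I do not anticipate any real obstacle: every step is an instance of a lemma already established in the excerpt, and the only subtle point is recognizing that the Boolean-combination hypothesis on $\psi$ transfers verbatim to $\neg\psi$, so Theorem \ref{PosIntro} applies symmetrically. The hypothesis $r>0$ is used only in the first direction (to invoke Theorem \ref{denesting1}); in the second direction all the work is done by (A1)/(A2)-style certainty reasoning and the Corollary \ref{PlusOne} ``multiplication by a probability-one event'' trick.
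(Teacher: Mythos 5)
Your proof is correct and takes essentially the same approach as the paper: the forward direction combines monotonicity with Theorem \ref{PosIntro} applied to $\psi$, and the reverse direction contraposes after using Theorem \ref{PosIntro} on $\neg\psi$ together with the certainty-multiplication facts of Corollary \ref{PlusOne}. The only cosmetic difference is that the paper invokes part 4 of Corollary \ref{PlusOne} (the $M_0$ form) directly, whereas you contrapose part 1; these are interchangeable.
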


\proof First we show that $(L_r \phi \vee \psi) \rightarrow L_r(\phi \vee \psi)$ is a theorem in $\Sigma_H$. We reason inside $\Sigma_H$ as follows:
    \begin{eqnarray}
        L_r \phi & \rightarrow & L_r(\phi \vee \psi) \nonumber\\
        \psi & \rightarrow & L_1 \psi \qquad \text{(Theorem \ref{PosIntro})} \nonumber \\
        & \rightarrow & L_r \psi \nonumber \\
        & \rightarrow & L_r (\phi \vee \psi) \nonumber \\
        L_r \phi \vee \psi & \rightarrow & L_r (\phi \vee \psi) \nonumber
    \end{eqnarray}

    For the other direction, we reason as follows:
    \begin{eqnarray}
    \neg \psi & \rightarrow & L_1 \neg \psi \qquad \text{(Theorem \ref{PosIntro})} \nonumber \\
    & \rightarrow & M_0 \psi \nonumber\\
    \neg L_r \phi \wedge \neg \psi & \rightarrow & \neg L_r (\phi \vee \psi) \qquad \text{(Corollary \ref{PlusOne})} \nonumber
    \end{eqnarray}\vspace{-30 pt}

\qed

It follows from the above three denesting theorems that a certain kind of probability formulas is equivalent to a formula of depth $\leq 1$.  A formula $\phi$ in the language $\mathcal{L}$ is called \emph{normal} if it is obtained inductively as follows
\begin{enumerate}
    \item any formula of depth $\leq 1$ is normal;
    \item if $L_{r_1}\phi_1, \cdots, L_{r_n}\phi_n$ are normal, then any Boolean combination of $L_{r_1}\phi_1, \cdots, L_{r_n}\phi_n$ is normal;
    \item if $L_r\phi (r>0)$ is normal and $\psi$ is a Boolean combination of normal formulas of the form $L_s \psi' (s >0)$, then  $L_r \psi$, $L_r(\phi \wedge \psi)$ and $L_r(\phi \vee \psi)$ are normal.
\end{enumerate}

\begin{cor} Any normal formula is equivalent to a formula of depth $\leq 1$ in $\Sigma_H$.
\end{cor}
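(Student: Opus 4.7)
The plan is a straightforward structural induction on the construction of a normal formula, following the three clauses of the definition, with the three denesting theorems (Theorems \ref{denesting1}, \ref{denesting2}, \ref{denesting3}) doing the real work in the inductive step. For the base case, a formula of depth $\leq 1$ is equivalent to itself, so there is nothing to prove. For the Boolean-combination clause, if each of $L_{r_1}\phi_1, \ldots, L_{r_n}\phi_n$ is normal and, by the inductive hypothesis, each is equivalent to some formula $\chi_i$ of depth $\leq 1$, then any Boolean combination of the $L_{r_i}\phi_i$ is equivalent in $\Sigma_H$ to the same Boolean combination of the $\chi_i$, which is again a formula of depth $\leq 1$.

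The substantive case is the third clause. Here we are given that $L_r\phi$ ($r > 0$) is normal and that $\psi$ is a Boolean combination of normal formulas of the form $L_s\psi'$ with $s > 0$. By the Boolean-combination clause, $\psi$ itself is normal, so by the inductive hypothesis both $L_r\phi$ and $\psi$ are equivalent in $\Sigma_H$ to formulas of depth $\leq 1$; call them $\chi$ and $\eta$, respectively. Now I treat the three sub-cases in turn. For $L_r\psi$: since $\psi$ is a Boolean combination of formulas of the form $L_s\psi'$, Theorem \ref{denesting1} gives $\vdash_{\Sigma_H} L_r\psi \leftrightarrow \psi$, so $L_r\psi$ is equivalent to $\eta$. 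For $L_r(\phi \wedge \psi)$: applying Theorem \ref{denesting2} with the same $\psi$ yields $\vdash_{\Sigma_H} L_r(\phi \wedge \psi) \leftrightarrow L_r\phi \wedge \psi$, and this is equivalent to $\chi \wedge \eta$, a formula of depth $\leq 1$. For $L_r(\phi \vee \psi)$: Theorem \ref{denesting3} gives the analogous rewrite to $L_r\phi \vee \psi \equiv \chi \vee \eta$.

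I expect no real obstacle: once the inductive setup is in place, each sub-case is a single application of one of the three denesting theorems combined with the inductive hypothesis. The only point that needs a moment's care is checking that the hypothesis of each denesting theorem is met, namely that the outer $\psi$ in the third clause is indeed a Boolean combination of formulas of the form $L_s\psi'$; but this is exactly what the definition of normality stipulates, so the denesting theorems apply verbatim.
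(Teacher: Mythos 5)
Your proposal is correct and matches the paper's intended argument: the paper states the corollary immediately after the remark that ``it follows from the above three denesting theorems,'' and your structural induction on the generating clauses of normality, with Theorems \ref{denesting1}--\ref{denesting3} handling the third clause, is exactly that argument spelled out. The one point you rightly flag---that the outer $\psi$ in clause (3) satisfies the hypothesis of the denesting theorems by definition of normality---is the only thing that needs checking, and you check it.
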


However, we don't have the following propositions in $\Sigma_H$:
\begin{enumerate}
    \item $L_r(\phi_1 \wedge \phi_2) \leftrightarrow L_{r_1}\phi_1 \wedge L_{r_2}\phi_2$
    \item $L_r(\phi_1 \vee \phi_2) \leftrightarrow L_{r_1}\phi_1 \vee L_{r_2}\phi_2$
\end{enumerate}

 \noindent which correspond to the normality of the knowledge operator $K: K(\phi_1 \wedge \phi_2) = K(\phi_1) \wedge K(\phi_2)$. Otherwise, with the above Theorems \ref{denesting1}, \ref{denesting2} and \ref{denesting3}, we would have been able to show that that any formula
 is equivalent to a formula of depth $\leq 1$.  Despite this defect, we will show Unique Extension Theorem which is similar to but weaker than the above two properties.

 In order to show the following Unique Extension Theorem, we prove a simple case as an illustration.  Recall that, for any formula $\phi$ in $\mathcal{L}(q, d, w)$ and any atom $\Gamma\in \Omega_H(q, d+1, w)$,
 \begin{center}
    \item $\alpha_{\phi}^{\Gamma} =max \{r: L_r\phi \in \Gamma\}$ and $\beta_{\phi}^{\Gamma} =inf \{r: M_r\phi \in \Gamma\}$.
\end{center}

\begin{lem} \label{SimpleExtension} Let $\Gamma_1, \Gamma_2$ and $\Gamma_3$ be three different atoms
in $\Omega_H(q, d, w)$ and $\gamma_1, \gamma_2$ and $\gamma_3$ be the equivalent statements of the
conjunctions of formulas in these three atoms, respectively.  Their
statements
 are $\gamma_1^{=0}\wedge \gamma_1^{>0}$, $\gamma_2^{=0}\wedge
\gamma_2^{>0}$ and $\gamma_3^{=0}\wedge \gamma_3^{>0}$.   Then the
following three propositions hold:

\begin{enumerate}

    \item if both  $\gamma_2^{>0}$ and $\gamma_3^{>0}$ are different
    from $\gamma_1^{>0}$ up to logical equivalence, then $\gamma_1\rightarrow M_0(\gamma_2\vee \gamma_3)$ is provable
    in $\Sigma_H$;

    \item if only one of them, say, $\gamma_2^{>0}$, is different from
    $\gamma_1^{>0}$ up to logical equivalence, then

        \begin{enumerate}
            \item $\gamma_1\rightarrow L_{\alpha_{\gamma_3^{=0}}^{\Gamma_1}}(\gamma_2\vee \gamma_3)
                \wedge M_{\beta_{\gamma_3^{=0}}^{\Gamma_1}}(\gamma_2\vee \gamma_3)$ is provable
                in $\Sigma_H$ whenever $\alpha_{\gamma_3^{=0}}^{\Gamma_1}=\beta_{\gamma_3^{=0}}^{\Gamma_1}$;

                \item $\gamma_1\rightarrow L_{\alpha_{\gamma_3^{=0}}^{\Gamma_1}}(\gamma_2\vee
                \gamma_3)\wedge \neg M_{\alpha_{\gamma_3^{=0}}^{\Gamma_1}}(\gamma_2\vee \gamma_3)
                \wedge M_{\beta_{\gamma_3^{=0}}^{\Gamma_1}}(\gamma_2\vee \gamma_3)
                \wedge \neg L_{\beta_{\gamma_3^{=0}}^{\Gamma_1}}(\gamma_2\vee \gamma_3)$ is provable
                in $\Sigma_H$ whenever $\alpha_{\gamma_3^{=0}}^{\Gamma_1} < \beta_{\gamma_3^{=0}}^{\Gamma_1}$;
        \end{enumerate}

    \item if none of these two is different from $\gamma_1^{>0}$,i.e., all of them are the same up to logical equivalence, then

    \begin{enumerate}
        \item $\gamma_1 \rightarrow L_{\alpha_{\gamma_2^{=0}\vee
            \gamma_3^{=0}}^{\Gamma_1}}(\gamma_2\vee \gamma_3) \wedge M_{\beta_{\gamma_2^{=0}\vee
            \gamma_3^{=0}}^{\Gamma_1}}(\gamma_2\vee \gamma_3)$ is a theorem of
            $\Sigma_H$ whenever $\alpha_{\gamma_2^{=0}\vee
            \gamma_3^{=0}}^{\Gamma_1} =\beta_{\gamma_2^{=0}\vee
            \gamma_3^{=0}}^{\Gamma_1}$;

        \item $\gamma_1 \rightarrow L_{\alpha_{\gamma_2^{=0}\vee
            \gamma_3^{=0}}^{\Gamma_1}}(\gamma_2\vee \gamma_3) \wedge \neg M_{\alpha_{\gamma_2^{=0}\vee
            \gamma_3^{=0}}^{\Gamma_1}}(\gamma_2\vee \gamma_3)\wedge M_{\beta_{\gamma_2^{=0}\vee
            \gamma_3^{=0}}^{\Gamma_1}}(\gamma_2\vee \gamma_3)\wedge \neg L_{\beta_{\gamma_2^{=0}\vee
            \gamma_3^{=0}}^{\Gamma_1}}(\gamma_2\vee \gamma_3)$ is a theorem of
            $\Sigma_H$ whenever $\alpha_{\gamma_2^{=0}\vee
            \gamma_3^{=0}}^{\Gamma_1} < \beta_{\gamma_2^{=0}\vee
            \gamma_3^{=0}}^{\Gamma_1}$;

    \end{enumerate}

\end{enumerate}

\end{lem}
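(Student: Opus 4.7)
The unifying idea behind all three cases is that, at any atom $\Gamma_1$, the entire probability part $\gamma_1^{>0}$ holds with probability $1$ by Corollary~\ref{CertainProbability}, so when we measure a disjunction $\gamma_2\vee\gamma_3$ from $\Gamma_1$, we can ``replace'' each $\gamma_i$ by its propositional part $\gamma_i^{=0}$ whenever $\gamma_i^{>0}$ is equivalent to $\gamma_1^{>0}$, and drop it entirely when $\gamma_i^{>0}$ is incompatible with $\gamma_1^{>0}$ (since then $\gamma_i$ has probability $0$ at $\Gamma_1$). Once this reduction is made at the probability level, the claimed bounds are exactly the definitions of $\alpha_\phi^{\Gamma_1}$ and $\beta_\phi^{\Gamma_1}$ together with Lemma~\ref{density}.

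For Case 1, my plan is to observe that $\gamma_2^{>0}$ and $\gamma_3^{>0}$, being different maximally consistent probability parts in the same finite language, each contradict $\gamma_1^{>0}$, so $\gamma_1^{>0}\vdash \neg\gamma_2\wedge\neg\gamma_3$, i.e.\ $\gamma_2\vee\gamma_3\vdash\neg\gamma_1^{>0}$. Reasoning inside $\Sigma_H$, I would chain $\gamma_1\to\gamma_1^{>0}\to L_1\gamma_1^{>0}$ (Corollary~\ref{CertainProbability}), then rewrite $L_1\gamma_1^{>0}$ as $M_0(\neg\gamma_1^{>0})$ (from $M_0\phi:=L_1\neg\phi$), and finish with the monotonicity clause of Proposition~\ref{basictheorem3} applied to the entailment $\gamma_2\vee\gamma_3\to\neg\gamma_1^{>0}$.

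For Case 2, assume $\gamma_3^{>0}\equiv\gamma_1^{>0}$ while $\gamma_2^{>0}$ contradicts $\gamma_1^{>0}$. The same argument as in Case~1 yields $\gamma_1\to M_0\gamma_2$, while $\gamma_3\leftrightarrow \gamma_3^{=0}\wedge \gamma_1^{>0}$ modulo $\Sigma_H$. I would then use Corollary~\ref{PlusOne}(1) in the form $L_1\gamma_1^{>0}\wedge L_\alpha\gamma_3^{=0}\to L_\alpha(\gamma_3^{=0}\wedge\gamma_1^{>0})\to L_\alpha\gamma_3\to L_\alpha(\gamma_2\vee\gamma_3)$ with $\alpha=\alpha_{\gamma_3^{=0}}^{\Gamma_1}$ (which belongs to $\Gamma_1$ by definition), and dually Corollary~\ref{PlusOne}(2) for $M_\beta\gamma_3^{=0}\in\Gamma_1$ to obtain $M_\beta\gamma_3$. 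Then Corollary~\ref{PlusOne}(3), $M_0\gamma_2\wedge M_\beta\gamma_3\to M_\beta(\gamma_2\vee\gamma_3)$, pushes the upper bound through the union. In sub-case (b), where $\alpha<\beta$, the same chain of Corollary~\ref{PlusOne} rewrites also passes through the negated modalities $\neg M_\alpha\gamma_3^{=0},\neg L_\beta\gamma_3^{=0}\in\Gamma_1$, giving the strict two-sided bounds.

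For Case 3, all three probability parts coincide, so $\gamma_2\vee\gamma_3$ is $\Sigma_H$-equivalent (modulo $\gamma_1^{>0}$, which has probability $1$) to $(\gamma_2^{=0}\vee\gamma_3^{=0})\wedge\gamma_1^{>0}$. I would then repeat the Corollary~\ref{PlusOne}(1)--(2) argument with $\phi:=\gamma_1^{>0}$ and $\psi:=\gamma_2^{=0}\vee\gamma_3^{=0}$, reading off the bounds from $\alpha_{\gamma_2^{=0}\vee\gamma_3^{=0}}^{\Gamma_1}$ and $\beta_{\gamma_2^{=0}\vee\gamma_3^{=0}}^{\Gamma_1}$ exactly as in Case~2. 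The main obstacle throughout is the bookkeeping needed to transport probability bounds from the purely propositional parts to the full statements $\gamma_i$: each step requires a careful invocation of Corollary~\ref{PlusOne} (for both $L_r$ and $M_r$, and both in positive and negated forms), and in sub-cases (b) one must verify that the strict inequalities persist under these rewrites, which is precisely where Lemma~\ref{density} and the ``at most one rational of accuracy $1/q$'' gap between $\alpha$ and $\beta$ are used implicitly.
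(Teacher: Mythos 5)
Your proposal is correct and follows essentially the same route as the paper's proof: certainty of the probability part via Corollary~\ref{CertainProbability}, transport of the $\alpha$/$\beta$ bounds from the propositional parts through Corollary~\ref{PlusOne}, and $M_0$ for atoms with incompatible probability parts. The only differences are cosmetic (in Case~1 you funnel through $\neg\gamma_1^{>0}$ once rather than handling $\gamma_2^{>0}$ and $\gamma_3^{>0}$ separately, and you spell out the strict-bound sub-cases~(b) that the paper leaves implicit), so no further changes are needed.
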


\noindent In other words, the probability of the set $\{\Gamma_2, \Gamma_3\}$ is syntactically uniquely determined  at the state $\Gamma_1$ in the canonical model $M_H(q, d,w)$.
Intuitively, Part 1 says that, if neither $\Gamma_2$ nor $\Gamma_3$  belongs to the same group as $\Gamma_1$ (in terms of the proof of Theorem \ref{HarsanyiCompleteness}), then
the probability measure at $\Gamma_1$ should assign 0 to the event consisting of the two states $\Gamma_2$ and $\Gamma_3$.  Parts 2 and 3 can be explained in the same way.

\proof  Assume that both  $\gamma_2^{>0}$ and $\gamma_3^{>0}$ are different
    from $\gamma_1^{>0}$.   Reason inside $\Sigma_H$:

    \begin{eqnarray}
    \gamma_1 & \rightarrow & \neg \gamma_2^{>0} \qquad \text{($\Gamma_1$ and $\Gamma_2$ are disjoint from each other)}\nonumber\\
    & \rightarrow & L_1(\neg \gamma_2^{>0})\nonumber\\
    & \rightarrow & M_0(\gamma_2^{>0})\nonumber\\
    \gamma_1 & \rightarrow & M_0(\gamma_3^{>0})\nonumber\\
     \gamma_1 & \rightarrow & M_0(\gamma_2^{>0}\vee\gamma_3^{>0})\nonumber\\
     \gamma_1 & \rightarrow & M_0(\gamma_2\vee \gamma_3) \qquad \text{(Proposition \ref{basictheorem3})}\nonumber
    \end{eqnarray}

\noindent Next we assume that only one of them, say, $\gamma_2^{>0}$,
is different from $\gamma_1^{>0}$.   From above, we know that
$\gamma_1\rightarrow M_0 \gamma_2$ is provable in $\Sigma_H$.

  \begin{eqnarray}
    \gamma_1 & \rightarrow & \gamma_3^{>0}\nonumber\\
    & \rightarrow & L_1(\gamma_3^{>0}) \qquad \text{(Corollary \ref{PlusOne})}\nonumber\\
    & \rightarrow & L_{\alpha_{\gamma_3^{=0}}^{\Gamma_1}}\gamma_3^{=0} \qquad \text{(by definition of $\alpha_{\gamma_3^{=0}}^{\Gamma_1}$)} \nonumber\\
    & \rightarrow & L_{\alpha_{\gamma_3^{=0}}^{\Gamma_1}}(\gamma_3^{=0}
    \wedge \gamma_3^{>0})\nonumber\\
    & \rightarrow & L_{\alpha_{\gamma_3^{=0}}^{\Gamma_1}}(\gamma_3)\nonumber\\
    \gamma_1 & \rightarrow & M_0 \gamma_2\nonumber\\
    \gamma_1 & \rightarrow & L_{\alpha_{\gamma_3^{=0}}^{\Gamma_1}}(\gamma_2\vee\gamma_3) \qquad \text{(Corollary \ref{PlusOne})}\nonumber\\
     \gamma_1 & \rightarrow &
     M_{\beta_{\gamma_3^{=0}}^{\Gamma_1}}(\gamma_3)\nonumber\\
    \gamma_1 & \rightarrow & M_0 \gamma_2\nonumber\\
     \gamma_1 & \rightarrow &
     M_{\beta_{\gamma_3^{=0}}^{\Gamma_1}}(\gamma_2\vee \gamma_3)\qquad \text{(Corollary \ref{PlusOne})}\nonumber\\
     \gamma_1 & \rightarrow & L_{\alpha_{\gamma_3^{=0}}^{\Gamma_1}}(\gamma_2\vee \gamma_3)\wedge
     M_{\beta_{\gamma_3^{=0}}^{\Gamma_1}}(\gamma_2\vee \gamma_3)\nonumber\\
    \nonumber \end{eqnarray}

    \noindent   Assume that none of these two is different from
    $\phi^1_{>0}$.  Then $\gamma_1^{>0} =\gamma_2^{>0} =\gamma_3^{>0}$.
    Reason inside $\Sigma_H$:

    \begin{eqnarray}
    \gamma_1 & \rightarrow & \gamma_2^{>0}\nonumber \\
    & \rightarrow & L_1 (\gamma_2^{>0}) \nonumber\\
    \gamma_1 & \rightarrow & L_1 (\gamma_2^{>0})\nonumber \\
    \gamma_1 & \rightarrow & L_1 (\gamma_3^{>0}) \nonumber\\
     & \rightarrow & L_{\alpha_{\gamma_2^{=0}\vee
     \gamma_3^{=0}}^{\Gamma_1}}(\gamma_2^{>0}\wedge (\gamma_2^{=0} \vee
     \gamma_3^{=0}))\nonumber\\
     \gamma_1 & \rightarrow & L_{\alpha_{\gamma_2^{=0}\vee
     \gamma_3^{=0}}^{\Gamma_1}}((\gamma_2^{>0}\wedge \gamma_2^{=0}) \vee
     (\gamma_3^{>0}\wedge
     \gamma_3^{=0}))\nonumber\\
     \gamma_1 & \rightarrow & L_{\alpha_{\gamma_2^{=0}\vee
     \gamma_3^{=0}}^{\Gamma_1}}(\gamma_2\vee \gamma_3)\nonumber\\
     \gamma_1 & \rightarrow & M_{\beta_{\gamma_2^{=0}\vee
     \gamma_3^{=0}}^{\Gamma_1}}(\gamma_2\vee \gamma_3)\nonumber\\
     \gamma_1 & \rightarrow & L_{\alpha_{\gamma_2^{=0}\vee
     \gamma_3^{=0}}^{\Gamma_1}}(\gamma_2\vee \gamma_3)\wedge M_{\beta_{\gamma_2^{=0}\vee
     \gamma_3^{=0}}^{\Gamma_1}}(\gamma_2\vee \gamma_3)\nonumber
    \end{eqnarray}\vspace{-30 pt}

\qed
\medskip

\begin{thm}\label{UniqueExtension} (Unique Extension Theorem) Probabilities of formulas $\phi$ in the maximally consistent extensions $\Gamma(q, d,w)$  are \emph{uniquely} determined by probabilities of their propositional parts $\phi^{=0}$ in the restrictions $\Gamma(q, d, w) \cap \mathcal{L}(q, 1, w)$
within \emph{depth 1}.    Let $\Gamma(q, d+1, w)$ be a maximally consistent
extension of $\Gamma(q, d, w)\in \Omega(q, d, w)$ by increasing its
depth by 1 and the statement of the conjunction of all formulas in $\Gamma(q, d, w)$ be
$\gamma^{=0}\wedge\gamma^{>0}$. Define $\Gamma(q, 1, w):=\Gamma(q, d,
w)\cap \mathcal{L}(q, 1, w)$.  Assume that $\phi$ is a formula in $\mathcal{L}(q,
d, w)$ and its normal form is the disjunction of the
following statements:

\begin{center}

    $\phi_1^{=0} \wedge \phi_1^{>0}, \phi_2^{=0} \wedge \phi_2^{>0},
    \cdots, \phi_n^{=0} \wedge \phi_n^{>0}$.

\end{center}

\noindent In addition, assume that the first $m(\leq n)$
probability parts are the same as $\gamma^{>0}$ and other probability
parts are different (up to logical equivalence), i.e.

\begin{center} $\phi_i^{>0} =\gamma^{>0} (1\leq i\leq m)$ and $\phi_j^{>0} \neq\gamma^{>0} (m+1\leq j\leq n)$

\end{center}

\noindent Then  $\alpha_{\phi}^{\Gamma(q, d+1, w)} =
\alpha_{\bigvee_{i=1}^m \phi_i^{=0}}^{\Gamma(q, 1, w)}$ and
$\beta_{\phi}^{\Gamma(q, d+1, w)} = \beta_{\bigvee_{i=1}^m
\phi_i^{=0}}^{\Gamma(q, 1, w)}$.    That is to say,  $\Gamma(q, d,
w)$ has one and only one maximal consistent extension in $\Omega_H(q,
d+1, w)$, which is $\Gamma(q, d+1, w)$.

\end{thm}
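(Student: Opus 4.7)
The plan is to extend the three-atom computation of Lemma \ref{SimpleExtension} to the arbitrary finite disjunction appearing in the normal form of $\phi$, and then to observe that the resulting $\Sigma_H$-equivalence reduces the probability of $\phi$ to the probability of a purely propositional formula, which is already pinned down inside $\Gamma(q,1,w)$. Everything is carried out by syntactic manipulation using Corollary \ref{CertainProbability}, Corollary \ref{PlusOne}, and Proposition \ref{additivity}; no new principles are required.

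First I would kill the disjuncts whose probability parts disagree with $\gamma^{>0}$. For each $j > m$, the statements $\gamma^{>0}$ and $\phi_j^{>0}$ come from distinct atoms of $\mathcal{L}(q,d,w)$ and are therefore propositionally incompatible. Corollary \ref{CertainProbability} gives $\gamma\to L_1\gamma^{>0}$, hence $\gamma\to L_1\neg\phi_j^{>0}$, i.e.\ $\gamma\to M_0\phi_j^{>0}$, and then $\gamma\to M_0\phi_j$ by Proposition \ref{basictheorem3}. Iterating Proposition \ref{additivity}(3) yields
\[
\vdash_{\Sigma_H}\; \gamma \to M_0\Bigl(\bigvee_{j=m+1}^n \phi_j\Bigr).
\]

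Next I would collapse the surviving disjuncts onto their propositional parts. For $i\leq m$, $\phi_i^{>0}=\gamma^{>0}$ and hence $\bigvee_{i=1}^m \phi_i = \bigl(\bigvee_{i=1}^m \phi_i^{=0}\bigr)\wedge\gamma^{>0}$. Using $\gamma\to L_1\gamma^{>0}$ together with both directions of Corollary \ref{PlusOne}(1)-(2) and the monotonicity half of Proposition \ref{basictheorem2}, I obtain under $\gamma$
\[
L_r\Bigl(\bigvee_{i=1}^m \phi_i^{=0}\Bigr)\leftrightarrow L_r\Bigl(\bigvee_{i=1}^m \phi_i\Bigr), \qquad M_r\Bigl(\bigvee_{i=1}^m \phi_i^{=0}\Bigr)\leftrightarrow M_r\Bigl(\bigvee_{i=1}^m \phi_i\Bigr).
\]
Feeding the conclusion of Step~1 into both directions of Corollary \ref{PlusOne}(3)-(4) then gives
\[
\vdash_{\Sigma_H}\; \gamma \to \Bigl(L_r\phi \leftrightarrow L_r\bigvee_{i=1}^m \phi_i^{=0}\Bigr)\wedge\Bigl(M_r\phi \leftrightarrow M_r\bigvee_{i=1}^m \phi_i^{=0}\Bigr).
\]

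Finally I would read off the theorem. Since $\bigvee_{i=1}^m \phi_i^{=0}$ is purely propositional, every modal literal $L_r\bigvee_{i=1}^m \phi_i^{=0}$ and $M_r\bigvee_{i=1}^m \phi_i^{=0}$ lies in $\mathcal{L}(q,1,w)$, so its membership in any maximally consistent set containing $\gamma$ is settled by $\Gamma(q,1,w)=\Gamma(q,d,w)\cap\mathcal{L}(q,1,w)$. The two equivalences above then force
\[
\alpha_\phi^{\Gamma(q,d+1,w)} = \alpha_{\bigvee_{i=1}^m \phi_i^{=0}}^{\Gamma(q,1,w)}, \qquad \beta_\phi^{\Gamma(q,d+1,w)} = \beta_{\bigvee_{i=1}^m \phi_i^{=0}}^{\Gamma(q,1,w)},
\]
independently of the chosen extension. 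Because every formula in $\mathcal{L}(q,d+1,w)$ is either of depth $\leq d$ (hence already decided by $\Gamma(q,d,w)$) or a Boolean combination of formulas $L_r\phi$ with $\phi\in\mathcal{L}(q,d,w)$ (hence decided by the displayed identities), this proves uniqueness of the extension. The main obstacle is the careful bookkeeping in Step~2, where one has to push $\gamma^{>0}$ through a disjunction using Corollary \ref{PlusOne} in both directions without losing any implication; this is essentially the manipulation of Lemma \ref{SimpleExtension} applied uniformly to $n$ atoms.
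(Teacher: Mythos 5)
Your proposal is correct and is essentially the paper's own argument: the paper proves the theorem by declaring it a straightforward generalization of Lemma \ref{SimpleExtension}, and your three steps (zeroing out disjuncts with mismatched probability parts via $M_0$, absorbing $\gamma^{>0}$ via $L_1\gamma^{>0}$ and Corollary \ref{PlusOne}, then reading off the $\alpha$ and $\beta$ values from the depth-1 restriction) are exactly that generalization carried out in detail.
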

\begin{proof} This theorem is a straightforward generalization of the above Lemma \ref{SimpleExtension}.
\end{proof}

It would be interesting to compare this unique extension theorem to the denesting property in $S5$.  The above theorem tells us that, in each maximally consistent set $\Gamma$ of the canonical models of any finite local language, the probability of any formula $\phi$ is uniquely determined by the probabilities of formulas of depth 0 in the subset of $\Gamma$ consisting of all formulas of depth $\leq 1$.  So,  it is similar to the denesting property in $S5$ that any formula in $S5$ is equivalent to a formula of depth 1.

The following theorem tells us that the unique extension is actually determined by the probability part.

\begin{thm}  Assume that $\Gamma_1(q, d, w)$ and $\Gamma_2(q, d,
w)(d\geq 1)$ are two atoms in $\Omega_H(q, d, w)$, and  $\Gamma_1(q,
d+k, w)$ and $\Gamma_2(q, d+k, w)$ are maximal consistent extensions
in $\Omega_H(q, d+k, w)$ of $\Gamma_1(q, d, w)$ and $\Gamma_2(q, d,
w)$, respectively, where $k$ is a natural number.  If the statements of $\Gamma_1(q, d,w)$ and
$\Gamma_2(q, d, w)$ have the same probability parts, then the statements of $\Gamma_1(q, d+k, w)$ and $\Gamma_2(q, d+k, w)$ also have
the same probability parts.

\end{thm}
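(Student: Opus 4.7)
The plan is to reduce the claim to the case $k=1$ by induction on $k$, and then to deduce this base case directly from the Unique Extension Theorem (Theorem~\ref{UniqueExtension}). The case $k=0$ is trivial. For the inductive step, since each unit-depth extension is unique, the extension $\Gamma_i(q, d+k+1, w)$ of $\Gamma_i(q, d, w)$ coincides with the unique extension to depth $d+k+1$ of $\Gamma_i(q, d+k, w)$; one then applies the case $k=1$ with $d$ replaced by $d+k \geq 1$.

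For the case $k=1$, let $\gamma^{>0}$ denote the common probability part of $\Gamma_1(q, d, w)$ and $\Gamma_2(q, d, w)$. The probability part of $\Gamma_i(q, d+1, w)$ consists of formulas $L_r\phi$ and $\neg L_r\phi$ with $\phi \in \mathcal{L}(q, d, w)$. Those with $\phi$ of depth strictly less than $d$ already lie in $\mathcal{L}(q, d, w)$ and hence belong to $\Gamma_i(q, d, w)$; by hypothesis they agree between $i=1$ and $i=2$. So it remains to show that $\alpha_\phi^{\Gamma_1(q, d+1, w)} = \alpha_\phi^{\Gamma_2(q, d+1, w)}$ and $\beta_\phi^{\Gamma_1(q, d+1, w)} = \beta_\phi^{\Gamma_2(q, d+1, w)}$ for every $\phi$ of depth exactly $d$ in $\mathcal{L}(q, d, w)$.

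This is exactly the content of the Unique Extension Theorem, which gives $\alpha_\phi^{\Gamma_i(q, d+1, w)} = \alpha_{\bigvee_{j=1}^m \phi_j^{=0}}^{\Gamma_i(q, 1, w)}$, where $\phi_j^{=0} \wedge \phi_j^{>0}$ are the statements in the normal form of $\phi$ and the disjunction ranges over those $j$ with $\phi_j^{>0} = \gamma^{>0}$. Because $\gamma^{>0}$ is common to both atoms, the index $m$ and the disjunction $\bigvee_{j=1}^m \phi_j^{=0}$ are the same for $i=1,2$. Moreover, for any propositional $\psi$, the value $\alpha_\psi^{\Gamma_i(q, 1, w)}$ is determined by the probability part of $\Gamma_i(q, 1, w) = \Gamma_i(q, d, w) \cap \mathcal{L}(q, 1, w)$, which is the depth-$1$ fragment of $\gamma^{>0}$ and hence common to both atoms. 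Thus both sides agree, and the $\beta$-values are handled symmetrically.

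The main conceptual step is to recognize that the Unique Extension Theorem already supplies precisely the determinacy required: the probabilistic data at depth $d+1$ is fully computable from the probability part at depth $d$ together with its depth-$1$ propositional fragment, both of which are contained in $\gamma^{>0}$. No genuine obstacle remains beyond verifying that the normal-form decomposition and the depth-$1$ restriction coincide across the two atoms, which is immediate from the hypothesis.
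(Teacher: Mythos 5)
Your proof is correct and follows what is clearly the intended route: the paper states this theorem without proof, immediately after the Unique Extension Theorem (Theorem~\ref{UniqueExtension}), and your argument---reduce to $k=1$ by uniqueness of extensions, then observe that $\alpha_\phi^{\Gamma_i(q,d+1,w)}=\alpha_{\bigvee_{j=1}^m\phi_j^{=0}}^{\Gamma_i(q,1,w)}$ depends only on the common probability part $\gamma^{>0}$ (which fixes both the selection of disjuncts $\phi_j^{=0}$ and the depth-$1$ data determining the right-hand side)---is precisely the derivation the paper intends. The only point worth making explicit is that the probability part of the statement of $\Gamma_i(q,d+1,w)$ is recovered from the values $\alpha_\phi$ and $\beta_\phi$ via monotonicity (Proposition~\ref{basictheorem2}) and Lemma~\ref{density}, which you implicitly use.
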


\begin{cor}  Any atom $\Gamma(q, d, w)\in \Omega_H(q, d, w)
(d\geq 1)$ has one and only one maximal consistent extension in
$\Omega_H(q, d+k, w)$ for any $k\geq 0$.  More precisely,  it is the
probability part of the statement of $\Gamma(q, d, w)$ that
uniquely determines the probability part of its maximal
consistent extension in $\Omega_H(q, d+k, w)$.

\end{cor}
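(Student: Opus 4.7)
The plan is to derive this corollary as a straightforward iteration of the two immediately preceding results: the Unique Extension Theorem gives uniqueness for a single depth-increment $d \to d+1$, and the theorem just before the corollary preserves probability parts under any depth-increment. I would proceed by induction on $k$.

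For the uniqueness claim, the base case $k=0$ is trivial since $\Gamma(q,d,w)$ is its own (only) maximal consistent extension in $\Omega_H(q,d,w)$. For the inductive step, let $\Delta$ be any maximal consistent extension of $\Gamma(q,d,w)$ in $\Omega_H(q, d+k+1, w)$. Because $\mathcal{L}(q, d+k, w) \subseteq \mathcal{L}(q, d+k+1, w)$ and $\Delta$ decides every formula in the larger language, the restriction $\Delta \cap \mathcal{L}(q, d+k, w)$ is a maximal consistent subset of $\mathcal{L}(q, d+k, w)$ containing $\Gamma(q,d,w)$. By the induction hypothesis this restriction is forced to be the unique extension $\Gamma(q, d+k, w)$. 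But then $\Delta$ is itself a maximal consistent extension in $\Omega_H(q, d+k+1, w)$ of the atom $\Gamma(q, d+k, w)$, whose depth satisfies $d+k \geq 1$; hence by the Unique Extension Theorem applied to $\Gamma(q, d+k, w)$, the choice of $\Delta$ is uniquely determined. This closes the induction and produces the unique extension, which we may name $\Gamma(q, d+k, w)$.

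For the refined ``more precisely'' claim, I would simply invoke the preceding theorem: if two atoms in $\Omega_H(q, d, w)$ share the same probability part, then the statements of their (now unique) maximal consistent extensions in $\Omega_H(q, d+k, w)$ also share the same probability part. Combined with the uniqueness just established, this shows that the probability part of the extension is a function of the probability part of $\Gamma(q,d,w)$ alone, as asserted.

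There is really no substantive obstacle here; the corollary is a bookkeeping consequence of the two previous results. The only mild care needed is the observation that a maximal consistent subset of $\mathcal{L}(q, d+k+1, w)$ restricts to a maximal consistent subset of $\mathcal{L}(q, d+k, w)$, so that the inductive hypothesis actually applies to the restricted set; this follows immediately because $\mathcal{L}(q, d+k, w)$ is closed under negation and every formula (or its negation) in the smaller language lies in $\Delta$.
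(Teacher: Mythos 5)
Your proposal is correct and matches the paper's intent: the corollary is stated there without proof precisely because it is the immediate iteration of the Unique Extension Theorem (for single-depth uniqueness) together with the preceding theorem on preservation of probability parts, which is exactly the induction on $k$ you carry out. Your added care about restrictions of maximal consistent sets to the smaller finite language is a correct and harmless elaboration of what the paper leaves implicit.
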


\subsection{Cardinality of the Canonical Model $M_H(q, d, w)$}

In order to compute the cardinality of the canonical model $M_H(q, d, w)$, we first show a ``conservation" result in the sense that a formula of depth $\leq 1$ provable in $\Sigma_H$ is also provable in $\Sigma_+$.

\begin{thm} \label{ConservationDepth} For any formula $\psi$ of depth $\leq 1$,
$\vdash_{\Sigma_H} \psi$ if and only if $\vdash_{\Sigma_+} \psi$.

\end{thm}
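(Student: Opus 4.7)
The plan is to prove this conservativity result semantically, using the completeness of $\Sigma_+$ (Theorem~\ref{Completeness}) and the soundness of $\Sigma_H$. The direction $\vdash_{\Sigma_+}\psi \Rightarrow \vdash_{\Sigma_H}\psi$ is immediate since $\Sigma_+ \subseteq \Sigma_H$ as axiomatic systems.

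For the non-trivial direction, I would argue by contraposition. Suppose $\not\vdash_{\Sigma_+}\psi$. Then $\{\neg\psi\}$ is $\Sigma_+$-consistent and, by Theorem~\ref{Completeness}, there exists a type space $M=\langle \Omega,\mathcal{A},T,v\rangle$ and a state $w_0\in \Omega$ such that $M,w_0\models \neg\psi$. I will transform $M$ into a Harsanyi type space $M'$ on the same carrier and valuation such that $M',w_0\models\neg\psi$; by soundness of $\Sigma_H$ this yields $\not\vdash_{\Sigma_H}\psi$.

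The key construction is to replace the type function by the constant function $T'(w):=T(w_0)$ for every $w\in\Omega$, and set $M':=\langle\Omega,\mathcal{A},T',v\rangle$. A constant function into $\Delta(\Omega,\mathcal{A})$ is automatically measurable, so $M'$ is a legitimate probability model. Since $T'$ is constant, $[T'(w)]=\Omega$ for every $w$, so the only $\mathcal{A}$-measurable set containing $[T'(w)]$ is $\Omega$ itself; the Harsanyi condition $(H)$ reduces to $T'(w)(\Omega)=1$, which is trivially satisfied. Hence $M'$ is a Harsanyi type space.

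It remains to verify $M',w_0\models\neg\psi$, which I would do by structural induction on Boolean connectives using the hypothesis that $\neg\psi$ has depth $\leq 1$. The atomic case is immediate because $v$ is unchanged. Any modal subformula has the shape $L_r\phi$ with $\phi$ of depth $0$, i.e.\ a Boolean combination of propositional letters; for such $\phi$ the semantic clauses do not involve $T$, so $[[\phi]]_{M'}=[[\phi]]_M$, and therefore $T'(w_0)([[\phi]]_{M'})=T(w_0)([[\phi]]_M)$, giving $M',w_0\models L_r\phi$ iff $M,w_0\models L_r\phi$. Combining these two cases via propositional reasoning yields $M',w_0\models\neg\psi$, completing the argument. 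The only delicate point — the \emph{raison d'\^etre} of the depth restriction — is that one-step subformulas $L_r\phi$ only probe the measure $T(w_0)$ on sets that are independent of the type function, so collapsing $T$ to a constant preserves their truth at $w_0$; any nesting $L_s L_r\phi$ would make $[[L_r\phi]]$ depend on $T$, and the constant-type collapse would in general alter its truth value.
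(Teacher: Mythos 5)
Your proof is correct. Both your argument and the paper's hinge on the same key observation: a type space whose type function is \emph{constant} is automatically a Harsanyi type space (since then $[T(w)]=\Omega$ and the condition $(H)$ degenerates to $T(w)(\Omega)=1$), and collapsing the type function to the single measure $T(w_0)$ cannot change the truth value at $w_0$ of any formula of depth $\leq 1$, because such a formula only probes $T(w_0)$ on interpretations of depth-$0$ subformulas, which are determined by $v$ alone. Where you differ is in how the witness model is obtained. The paper works syntactically: it takes the maximal $\Sigma_+$-consistent set $\Gamma_0(q(\psi),1,w(\psi))$ containing the given consistent $\psi$, builds the finite canonical space $\Omega(q(\psi),0,w(\psi))$ of depth-$0$ atoms, invokes Theorem~\ref{Crucial} to manufacture one probability measure from an infinite maximal consistent extension, assigns that same measure to every atom, and then reproves a truth lemma for depth-$\leq 1$ formulas. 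You instead treat the completeness theorem for $\Sigma_+$ (Theorem~\ref{Completeness}) as a black box, take an arbitrary countermodel of $\psi$ at a state $w_0$, and perform model surgery by setting $T'(w):=T(w_0)$ everywhere. Your route is more modular and shorter, since it avoids re-entering the canonical-model machinery; the paper's route has the side benefit of producing an explicit \emph{finite} canonical witness, which is reused later (e.g., the proof of Theorem~\ref{Independence} appeals to the same construction, and the finiteness feeds into the cardinality results). One small point worth making explicit in your write-up is that $M'$ is still a legitimate probability model in the paper's sense, i.e.\ that $[[\chi]]_{M'}$ remains measurable for \emph{all} formulas $\chi$ (not just those of depth $\leq 1$); this is immediate because with a constant type function every set $[[L_r\chi]]_{M'}$ is either $\emptyset$ or $\Omega$, but it should be said since measurability of all interpretations is part of the definition of the satisfaction relation.
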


\begin{proof} It suffices to show that, for any formula of depth 1, if
it is consistent in $\Sigma_+$, then so is it in $\Sigma_H$.  Assume
that $\psi$ is a $\Sigma_+$-consistent formula of depth 1.  Then it
is contained in a maximal $\Sigma_+$-consistent set $\Gamma_0(q(\psi), 1,
w(\psi))\in \Omega(q(\psi), 1, w(\psi))$.
Recall that $\Omega(q(\psi), 1, w(\psi))$ denotes the set of all maximal $\Sigma_+$-consistent
set of formulas in $\mathcal{L}(q(\psi), 1, w(\psi))$ and $\Gamma_0(q(\psi), 0, w(\psi))$ denotes the set
$\Gamma_0(q(\psi), 1, w(\psi))\cap \mathcal{L}(q(\psi), 0, w(\psi))$, which is the set of formulas of
depth 0 in $\Gamma_0(q(\psi), 1, w(\psi))$.\\

\noindent  Now we define a Harsanyi type space on $\Omega(q(\psi), 0, w(\psi))$.
Consider $\Gamma_0(q(\psi), 0, w(\psi))$, which is an element of $\Omega(q(\psi), 0,
w(\psi))$.  We know from Theorem \ref{Crucial} that there is a probability measure $T_{\psi}(\Gamma(q(\psi),
0, w(\psi)))$ at $\Gamma(q(\psi), 0, w(\psi))$, which is defined through a maximal \emph{$\Sigma_+$-consistent} extension $\Gamma^{\infty}(q(\psi), 0, w(\psi))$ in $\mathcal{L}$,  such that, for any formula $\phi\in
\mathcal{L}(q(\psi), 0, w(\psi))$,

\begin{enumerate}

    \item if $\alpha_{\phi}^{\Gamma_0(q(\psi), 1, w(\psi))} =  \beta_{\phi}^{\Gamma_0(q(\psi), 1,
    w(\psi))}$, then $T_{\psi}(\Gamma_0(q(\psi), 0, w(\psi)))([\phi]) = \alpha_{\phi}^{\Gamma_0(q(\psi), 1, w(\psi))}
    $ where, as usual, $[\phi] = \{\Xi\in \Omega(q(\psi), 0, w(\psi)): \phi \in \Xi\}$;

    \item if $\alpha_{\phi}^{\Gamma_0(q(\psi), 1, w(\psi))} <  \beta_{\phi}^{\Gamma_0(q(\psi), 1,
    w(\psi))}$, then $\alpha_{\phi}^{\Gamma_0(q(\psi), 1, w(\psi))}< T_{\psi}(\Gamma_0(q(\psi), 0, w(\psi)))([\phi]) < \beta_{\phi}^{\Gamma_0(q(\psi), 1, w(\psi))}
    $.

\end{enumerate}

\noindent For other atoms $\Xi\in \Omega(q(\psi), 0, w(\psi))$,
define $T_{\psi}(\Xi): =  T_{\psi} $ $(\Gamma_0(q(\psi), 0, w(\psi)))$ and further the
canonical model $M(q(\psi), 0, w(\psi)):=\langle \Omega(q(\psi), 0, w(\psi)), 2^{\Omega(q(\psi),
0, w(\psi))}, T_{\psi}, V\rangle$ where $V(p)=[p](: = \{\Xi\in \Omega(q(\psi),
0, w(\psi)): p\in \Xi\})$.  $M(q(\psi), 0, w(\psi))$ is a Harsanyi type
space. Indeed, for any atom $\Xi\in \Omega(q(\psi), 0, w(\psi))$, $[T_{\psi}(\Xi)]
= \Omega(q(\psi), 0, w(\psi))$ and hence $T_{\psi}(\Xi)([T_{\psi}(\Xi))])=1$.

\begin{clm} \label{Claim3} $M(q(\psi), 0, w(\psi)), \Gamma_0(q(\psi), 0, w(\psi))\models \psi$.

\end{clm}

\noindent It suffices to show that, for any formula $\phi$ in $\mathcal{L}(q(\psi),
1, d(\psi))$,

\begin{center}
$M(q(\psi), 0, d(\psi)), \Gamma_0(q(\psi), 0, d(\psi))\models \phi$ iff $\phi\in
\Gamma_0(q(\psi), 1, d(\psi))$.
\end{center}
 It is easy to check that this is true for the
base case and the Boolean cases.  The proof of the nontrivial case: $\phi = L_r\phi'$
is similar to that of Lemma \ref{truth}. For completeness of the presentation, we provide the proof details here.

Here we only prove the crucial case: $\phi = L_r\phi'$.  Assume that
$\Gamma_0(q(\psi), 0, d(\psi)) \models L_r\phi'$. According to  our above definition of $\Gamma_{\psi}$ and induction hypothesis: $[[\phi']]=[\phi']$, $T_{\psi}(\Gamma_0(q(\psi), 0, d(\psi)))([\phi']) \geq r$.  If $\alpha_{\phi'}^{\Gamma_0(q(\psi), 1, d(\psi))}=\beta_{\phi'}^{\Gamma_0(q(\psi), 1, d(\psi))}$, then $T_{\psi}(\Gamma_0(q(\psi), 0, d(\psi)))$ $([\phi']) = \alpha_{\phi'}^{\Gamma_0(q(\psi), 1, d(\psi))}\geq r$ and  $L_r\phi' \in \Gamma_0(q(\psi), 1, d(\psi))$.  If $\alpha_{\phi'}^{\Gamma_0(q(\psi), 1, d(\psi))}<\beta_{\phi'}^{\Gamma_0(q(\psi), 1, d(\psi))}$, then $\alpha_{\phi'}^{\Gamma_0(q(\psi), 1, d(\psi))}<T_{\psi}(\Gamma_0(q(\psi), 0, d(\psi)))([\phi'])< \beta_{\phi'}^{\Gamma_0(q(\psi), 1, d(\psi))}$. Since $r$ is a multiple of $1/q_{\psi}$ and $\beta_{\phi'}^{\Gamma_0(q(\psi), 1, d(\psi))}=\alpha_{\phi'}^{\Gamma_0(q(\psi), 1, d(\psi))}+1/q_{\psi}$ (Lemma \ref{density}),  $r\leq \alpha_{\phi'}^{\Gamma_0(q(\psi), 1, d(\psi))}$ and hence $L_r\in \Gamma_0(q(\psi), 1, d(\psi))$.

For the other direction, assume that $L_r\phi'\in \Gamma_0(q(\psi), 1, d(\psi))$. This implies that $L_r\phi'\in \Gamma^{\infty}(q(\psi), 0, w(\psi))$, which is a maximal consistent extension of
 $\Gamma_0(q(\psi), 1, d(\psi))$ in $\mathcal{L}$, and $T_{\psi}(\Gamma_0(q(\psi), 0, d(\psi)))([\phi]) = \alpha_{\phi'}^{\Gamma^{\infty}} \geq r$ by the definition of $\alpha_{\phi'}^{\Gamma^{\infty}}$.  According to the induction hypothesis, $M[\psi], \Gamma_0(q(\psi), 0, d(\psi))\models \phi$.


Since $\psi\in \Gamma_0(q(\psi), 1, w(\psi))$, $\psi$ is satisfied at $\Gamma_0(q(\psi), 0, w(\psi))$  in $M(q(\psi), 0, w(\psi))$.  According to the soundness for $\Sigma_H$, $\psi$ is $\Sigma_H$-consistent.
\end{proof}

\setlength{\unitlength}{2mm}
    \begin{picture}(80,45)

    \put(30, 41){$\top$}
    \put(30, 40){\vector(-4, -1){20}}
    \put(30, 40){\vector(-2, -1){10}}
    \put(30, 40){\vector(4, -1){20}}
    \put(30, 40){\vector(2, -1){10}}

    \put(5, 33){$\Gamma_1(q, 1, w)$}
    \put(15, 33){$\Gamma_2(q, 1, w)$}
    \put(35, 33){$\Gamma_{N-1}(q, 1, w)$}
    \put(47, 33){$\Gamma_N(q, 1, w)$}

    \put(10, 32){\vector(0, -1){5}}
    \put(20, 32){\vector(0, -1){5}}
    \put(40, 32){\vector(0, -1){5}}
     \put(50, 32){\vector(0, -1){5}}

     \put(5, 25){$\Gamma_1(q, 2, w)$}
    \put(15, 25){$\Gamma_2(q, 2, w)$}
    \put(35, 25){$\Gamma_{N-1}(q, 2, w)$}
    \put(47, 25){$\Gamma_N(q, 2, w)$}

     \put(10, 22){$\vdots$}
    \put(20, 22){$\vdots$}
    \put(40, 22){$\vdots$}
    \put(50, 22){$\vdots$}

      \put(10, 20){\vector(0, -1){5}}
    \put(20, 20){\vector(0, -1){5}}
    \put(40, 20){\vector(0, -1){5}}
     \put(50, 20){\vector(0, -1){5}}

      \put(5, 12){$\Gamma_1(q, d, w)$}
    \put(15, 12){$\Gamma_2(q, d, w)$}
    \put(35, 12){$\Gamma_{N-1}(q, d, w)$}
    \put(47, 12){$\Gamma_N(q, d, w)$}

     \put(30, 35){$\cdots$}
     \put(30, 23){$\cdots$}
     \put(30, 13){$\cdots$}
     
     \put(15, 5){Figure 1: maximal consistent
    extensions }

    \end{picture}

    \noindent The above figure illustrates the maximal consistent
    extensions in $\Sigma_H$.   The first step maximal consistent
    extensions from $\top$ in $\Sigma_H$ are the same as those in
    $\Sigma_+$ because the set of maximal $\Sigma_H$-consistent
    sets of formulas of depth $\leq 1$ is the same as that of maximal
    $\Sigma_+$-consistent sets of formulas of depth $\leq 1$.   But
    after that, any atom has only one maximal consistent extension,
    which is illustrated in the figure by demonstrating that each node from the
    second step has only one descendant.

\begin{thm}\label{Independence} If $\gamma^{=0}$ is the propositional part of some atom $\Gamma$
in $\Omega_H(q, d, w)$ and $\delta^{>0}$ is the probability part of some atom $\Xi$ in $\Omega_H(q, d, w)$, then
$\gamma^{=0}\wedge \delta^{>0}$ is $\Sigma_H$-consistent.  In this sense, the propositional part and the probability part are independent of each other.

\end{thm}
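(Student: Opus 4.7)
The plan is to use soundness and completeness of $\Sigma_H$ to reduce consistency of $\gamma^{=0}\wedge\delta^{>0}$ to satisfiability in a Harsanyi type space, and then to construct such a space by a single-point extension of a model for $\Xi$. Since $\Xi$ is $\Sigma_H$-consistent, completeness (Theorem \ref{HarsanyiCompleteness}) yields a Harsanyi type space $M=\langle \Omega, \mathcal{A}, T, V\rangle$ with a state $w_0$ satisfying $\Xi$, and in particular $\delta^{>0}$. I shall then produce a model $M'=\langle \Omega', \mathcal{A}', T', V'\rangle$ by adjoining a fresh state $\hat{w}$ whose beliefs mirror those of $w_0$ on the original measurable structure, but whose propositional colouring is dictated by $\gamma^{=0}$.

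Concretely, let $\Omega':=\Omega\cup\{\hat{w}\}$ with $\mathcal{A}'$ the $\sigma$-algebra generated by $\mathcal{A}\cup\{\{\hat{w}\}\}$; for every $v\in\Omega$ define $T'(v)(E):=T(v)(E\cap\Omega)$, and set $T'(\hat{w})(E):=T(w_0)(E\cap\Omega)$, so that $\{\hat{w}\}$ has $T'$-measure zero uniformly. Define $V'(p):=V(p)\cup\{\hat{w}\}$ when $p$ appears positively in $\gamma^{=0}$, and $V'(p):=V(p)$ otherwise. Measurability of $T'$ as a map into $\Delta(\Omega',\mathcal{A}')$ follows from the fact that every $E'\in\mathcal{A}'$ has the form $A$ or $A\cup\{\hat{w}\}$ for some $A\in\mathcal{A}$, which reduces everything to measurability of $T$. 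The Harsanyi condition is inherited: for $v$ with $T(v)\neq T(w_0)$ the equivalence class $[T'(v)]$ coincides with $[T(v)]$; for $v$ with $T(v)=T(w_0)$ and for $\hat{w}$ itself, the class $[T'(v)]$ acquires the extra point $\hat{w}$, but since $T'(v)(\{\hat{w}\})=0$ any $\mathcal{A}'$-measurable superset of $[T'(v)]$ still carries full $T'(v)$-mass.

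The decisive invariant is that inserting $\hat{w}$ does not perturb interpretations on the old carrier: by induction on the complexity of $\phi$ one shows $[[\phi]]_{M'}\cap\Omega=[[\phi]]_M$, the modal step using $T'(v)([[\psi]]_{M'})=T(v)([[\psi]]_{M'}\cap\Omega)$ (as $T'(v)(\{\hat{w}\})=0$) together with the inductive hypothesis. Combined with $T'(\hat{w})(\{\hat{w}\})=0$, this yields $T'(\hat{w})([[\phi]]_{M'})=T(w_0)([[\phi]]_M)$ for every $\phi$, so each $L_r\phi$ or $\neg L_r\phi$ conjunct of $\delta^{>0}$ transfers verbatim from $w_0$ in $M$ to $\hat{w}$ in $M'$, while $V'$ forces $\gamma^{=0}$ at $\hat{w}$ by construction. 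Hence $M',\hat{w}\models\gamma^{=0}\wedge\delta^{>0}$, and soundness of $\Sigma_H$ delivers the desired consistency. The most delicate point I foresee is verifying the Harsanyi condition at those old states $v$ whose type equals $T(w_0)$, since their equivalence class absorbs the new point $\hat{w}$; this is precisely what the measure-zero stipulation $T'(v)(\{\hat{w}\})=0$ is designed to neutralise.
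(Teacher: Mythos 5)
Your proof is correct, but it takes a genuinely different route from the paper's. The paper argues syntactically through its own machinery: it first shows that $\{\gamma^{=0}\}\cup\Xi_1$ is consistent, where $\Xi_1$ is the set of formulas $L_r p$ or $\neg L_r p$ in $\Xi$ with $p$ a propositional letter (reusing the constant-type Harsanyi model built in Claim \ref{Claim3} of Theorem \ref{ConservationDepth}), extends this to an atom $\Gamma_1\in\Omega_H(q,1,w)$, and then invokes the Unique Extension Theorem \ref{UniqueExtension} to conclude that the unique depth-$d$ extension of $\Gamma_1$ has statement $\gamma^{=0}\wedge\delta^{>0}$. You instead give a direct model-theoretic argument: starting from a Harsanyi model of $\Xi$ supplied by completeness, you adjoin a single fresh state carrying $w_0$'s type but $\gamma^{=0}$'s valuation, check measurability and the Harsanyi condition $(H)$ (correctly noting that the only delicate point is that the class $[T'(v)]$ for $v$ of type $T(w_0)$ absorbs the null point $\hat{w}$), prove the invariance $[[\phi]]_{M'}\cap\Omega=[[\phi]]_M$, and conclude by soundness. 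Your construction is self-contained and bypasses both the Unique Extension Theorem and the conservation theorem, and it handles the full depth-$d$ probability part $\delta^{>0}$ in one stroke rather than reducing to depth $1$; what it gives up is the extra information the paper's route provides, namely that the consistent combination is realized as an atom of the canonical model whose higher-depth probability part is forced by its depth-$1$ part. Both arguments are sound; yours is arguably the more elementary one.
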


\begin{proof} Under the above assumption, let $\Xi_1$ be the set of formulas in $\Xi$ of the form $L_r p$ or $\neg L_r p$ where $p$ is a propositional letter.  First we show that $\{\gamma^{=0}\}\cup\Xi_1$ is $\Sigma_H$-consistent. But this follows directly from the same argument as in Claim \ref{Claim3} of the above Theorem \ref{ConservationDepth}.

It is easy to see that $\{\gamma^{=0}\}\cup\Xi_1$ can be extended to a maximally $\Sigma_H$-consistent atom $\Gamma_1\in \Omega_H(q, 1, w)$ and the statement of $\Gamma_1$ is tautologically equivalent to the conjunction of all formulas in $\{\gamma^{=0}\}\cup\Xi_1$.  According to the above Unique Extension Theorem, $\Gamma_1$
 has a unique maximal consistent extension in $\Omega_H(q, d, w)$ whose statement must be tautologically equivalent to $\gamma^{=0}\wedge \delta^{>0}$.  It follows that $\gamma^{=0}\wedge \delta^{>0}$ is $\Sigma_H$-consistent.
\end{proof}

\begin{exa} Here we take as illustration the simplest case: $q = 2, d=1$ and $w = 1$.  That is to say, there is only one propositional letter and we denote it by $p$.  When depth =0, there are only two maximally consistent sets of formulas: $\{p\}$ and $\{\neg p\}$. It is easy to see that if we exclude propositional letters,  there are totally 5 maximally consistent sets of formulas:
    $\{L_0p, M_0p\}, \{L_0p, \neg M_0 p, M_{1/2}p, \neg L_{1/2} p\},
\{L_{1/2}p, M_{1/2}p\},\{L_{1/2}p, \neg M_{1/2} p, M_1p, \neg L_1p\}$ and $\{L_1p, M_1p\}$.

By Theorem \ref{Independence}, we know that $|\Omega_H(2, 1, 1)|=10$.  According to the Unique Extension Theorem, $|\Omega_H(2, d, 1)|=10$ for all $d\geq 1$.

\end{exa}

From this example, it follows immediately that
\begin{lem} Let $w=1$, i.e., the language has only one propositional letter. $|\Omega_H(q, d, 1)| = 2(2q+1) $ for all $d\geq 1$.

\end{lem}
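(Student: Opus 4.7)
The plan is to reduce the depth-$d$ count to the depth-$1$ count using the Unique Extension Theorem, and then to split the depth-$1$ count into an independent product of a propositional factor and a probability factor by appealing to Theorem~\ref{Independence}.

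First I would decompose each atom $\Gamma \in \Omega_H(q, 1, 1)$ as a pair $(\gamma^{=0}, \gamma^{>0})$ consisting of its propositional part and its probability part. Theorem~\ref{Independence} guarantees that every syntactically admissible combination of such a pair is $\Sigma_H$-consistent, so the count at depth $1$ is exactly the product of the number of propositional parts and the number of probability parts. With a single propositional letter $p$, the propositional parts are the two atoms $p$ and $\neg p$, contributing a factor of $2$.

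Second, I would enumerate the probability parts at depth $1$ directly. With $w = 1$ and $d = 1$, every modal conjunct is of the form $L_r p$ or $\neg L_r p$ for $r \in I := \{0, 1/q, 2/q, \ldots, 1\}$, so the probability part of $\Gamma$ is completely determined by the value $\alpha_p^{\Gamma} = \max\{r \in I : L_r p \in \Gamma\}$ together with $\beta_p^{\Gamma} = \min\{r \in I : M_r p \in \Gamma\}$ (from which $L_r \neg p$ and $\neg L_r \neg p$ are recoverable via axioms A3--A5 and the definition of $M_r$). By Lemma~\ref{density}, either $\beta_p^{\Gamma} = \alpha_p^{\Gamma}$ or $\beta_p^{\Gamma} = \alpha_p^{\Gamma} + 1/q$. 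The first possibility yields $q+1$ probability parts, one for each value of $\alpha_p^{\Gamma} \in I$; the second yields $q$ further probability parts, one for each $\alpha_p^{\Gamma} \in \{0, 1/q, \ldots, (q-1)/q\}$. This totals $(q+1) + q = 2q+1$ probability parts, matching the count of $5$ in the worked example for $q = 2$.

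Combining the two factors gives $|\Omega_H(q, 1, 1)| = 2(2q+1)$. To pass from depth $1$ to depth $d \geq 1$, I would invoke the Corollary to the Unique Extension Theorem, which asserts that every atom in $\Omega_H(q, 1, 1)$ has one and only one maximal consistent extension in $\Omega_H(q, d, 1)$. Hence the atoms of $\Omega_H(q, d, 1)$ are in bijection with those of $\Omega_H(q, 1, 1)$, giving $|\Omega_H(q, d, 1)| = 2(2q+1)$ for all $d \geq 1$. The only step that requires real care is verifying that the admissible combinations of $\alpha_p^{\Gamma}$ and $\beta_p^{\Gamma}$ from Lemma~\ref{density} each correspond to a genuinely distinct $\Sigma_H$-consistent atom; once this enumeration is pinned down, Theorem~\ref{Independence} and the Unique Extension corollary make the rest of the argument routine.
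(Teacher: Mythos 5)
Your proposal is correct and follows essentially the same route as the paper, which presents the worked example for $q=2$ (two propositional parts times the five probability parts determined by the admissible $(\alpha_p,\beta_p)$ pairs, combined via Theorem~\ref{Independence} and propagated to all depths $d\geq 1$ by the Unique Extension Theorem) and then states that the general lemma ``follows immediately.'' Your write-up is in fact more explicit than the paper's, since you carry out the general count $(q+1)+q=2q+1$ of probability parts via Lemma~\ref{density} rather than leaving it at the $q=2$ instance; the one point you flag---that each admissible $(\alpha_p,\beta_p)$ combination is realized by a consistent atom---is likewise left implicit in the paper's example.
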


\begin{thm} If the language has finitely many propositional letters, then $|\Omega_H(q, d, w)| $ is finite and
$|\Omega_H(q, d, w)|= |\Omega_H(q, d+k, w)|$ for all $d\geq 1$ and $k\geq 0$.
\end{thm}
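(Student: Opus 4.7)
The plan is to establish the theorem in two steps: first the finiteness claim, then the stability of cardinality under increasing depth. Both rely on machinery already available in the excerpt, most critically the Unique Extension Theorem (Theorem \ref{UniqueExtension}) together with its Corollary, and the fact that $\mathcal{L}(q,d,w)$ is finite (established by induction on $d$ in the section introducing the local language).

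For finiteness, I would argue as follows. The inductive construction of $\mathcal{L}(q,d,w)$ modulo logical equivalence shows $|\mathcal{L}(q,d,w)| < \infty$: the base case $\mathcal{L}(q,0,w)$ has at most $2^{2^{w}}$ equivalence classes (Boolean combinations of $w$ letters), and the inductive step prefixes formulas of $\mathcal{L}(q,d{-}1,w)$ with $L_r$ or $\neg L_r$ for $r$ in the finite set of multiples of $1/q$ in $[0,1]$, closed under finite Boolean combinations. Since any maximally $\Sigma_H$-consistent subset of $\mathcal{L}(q,d,w)$ is a subset of this finite set, $|\Omega_H(q,d,w)| \leq 2^{|\mathcal{L}(q,d,w)|}$, which is finite.

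For the equality $|\Omega_H(q,d,w)| = |\Omega_H(q,d+k,w)|$ when $d \geq 1$, I would exhibit a bijection via the restriction map
\[
\pi \colon \Omega_H(q,d+k,w) \longrightarrow \Omega_H(q,d,w), \qquad \pi(\Gamma') := \Gamma' \cap \mathcal{L}(q,d,w).
\]
First, $\pi$ is well-defined: the restriction of a maximally $\Sigma_H$-consistent set in $\mathcal{L}(q,d+k,w)$ to the sublanguage $\mathcal{L}(q,d,w)$ is still consistent there, and the maximality in the smaller language follows because for every $\phi \in \mathcal{L}(q,d,w)$ either $\phi$ or $\neg \phi$ lies in $\Gamma'$. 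Second, $\pi$ is surjective: given $\Gamma \in \Omega_H(q,d,w)$, apply the Lindenbaum Lemma (Lemma \ref{Lindenbaum1}) inside the finite language $\mathcal{L}(q,d+k,w)$ to extend $\Gamma$ to a maximally $\Sigma_H$-consistent set of $\mathcal{L}(q,d+k,w)$ containing it. Third, and this is the crucial step, $\pi$ is injective: this is precisely the content of the Corollary to Theorem \ref{UniqueExtension}, which asserts that when $d \geq 1$ each atom $\Gamma \in \Omega_H(q,d,w)$ has one and only one maximal consistent extension in $\Omega_H(q,d+k,w)$.

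The main conceptual obstacle has already been surmounted, namely proving the Unique Extension Theorem; once it is in hand, the present theorem is a bookkeeping consequence. The only subtlety worth a sentence in the write-up is why $d \geq 1$ is necessary: for $d = 0$ the restriction $\Gamma'\cap \mathcal{L}(q,0,w)$ retains no information about the probability part $\gamma^{>0}$ that the Unique Extension Theorem uses to determine the extension, so injectivity can fail at depth $0$ but is guaranteed from depth $1$ upward.
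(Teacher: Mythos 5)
Your proof is correct and follows essentially the same route as the paper: finiteness comes from the finiteness of the local language $\mathcal{L}(q,d,w)$, and the equality of cardinalities across depths rests on the Unique Extension Theorem (Theorem \ref{UniqueExtension}) and its corollary, which you correctly identify as supplying injectivity of the restriction map. The only cosmetic differences are that the paper routes its finiteness argument through Theorem \ref{Independence} to count $|\Omega_H(q,1,w)|$ more precisely, whereas you bound $|\Omega_H(q,d,w)|$ by $2^{|\mathcal{L}(q,d,w)|}$ directly, and that your explicit verification that the restriction map is a well-defined surjection (via Lindenbaum inside the finite language) spells out what the paper's one-line appeal to Theorem \ref{UniqueExtension} leaves implicit.
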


\begin{proof} If the language has finitely many propositional letters, i.e., $w$ is finite, then there are finitely many maximally consistent sets of formulas of depth 0 and hence finitely many probability specifications of these maximal consistent sets in the finite language $\mathcal{L}(q, 1, w)$.  So, it follows from Theorem \ref{Independence} that, up to logical equivalence, $|\Omega_H(q, 1, w)|$ is determined by the number of Boolean combinations of formulas of depth 0 and formulas of the form $L_r \varphi$ where $\varphi$ is a formula of depth 0, and hence is finite. By Theorem \ref{UniqueExtension}, we know that $|\Omega_H(q, d, w)|= |\Omega_H(q, d+k, w)|$ for all $d\geq 1$ and $k\geq 0$.
\end{proof}

\begin{thm} \label{CardinalityOneagent} If the probability indices of the language $\mathcal{L}$ are restricted to a finite set of rationals and the language has finitely many propositional letters, i.e., $w$ is finite, then the canonical space for $\Sigma_H$ is finite.
\end{thm}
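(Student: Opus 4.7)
The plan is to reduce the claim to the Unique Extension Theorem (Theorem \ref{UniqueExtension}) together with the finiteness of $|\Omega_H(q, 1, w)|$ already established in the preceding theorem. I would fix $q$ to be any common multiple of the denominators of the finitely many admissible rational indices, and write $\mathcal{L}^{\ast}:=\bigcup_{d\geq 0}\mathcal{L}(q,d,w)$ for the resulting restricted language. Elements of the canonical space for $\Sigma_H$ are then the maximally $\Sigma_H$-consistent subsets of $\mathcal{L}^{\ast}$, and the goal is to show there are only finitely many of them.

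First I would associate to each maximally consistent $\Xi\subseteq\mathcal{L}^{\ast}$ its depthwise restrictions $\Xi_d:=\Xi\cap\mathcal{L}(q,d,w)$. Each $\Xi_d$ is a maximally $\Sigma_H$-consistent set in $\mathcal{L}(q,d,w)$, i.e.\ an atom in $\Omega_H(q,d,w)$; the restrictions form an ascending chain with $\Xi=\bigcup_{d\geq 0}\Xi_d$, so $\Xi$ is determined by the tower $(\Xi_d)_{d\geq 1}$. Next I would invoke the corollary to the Unique Extension Theorem, which guarantees that for every $d\geq 1$ and every $k\geq 0$ each atom of $\Omega_H(q,d,w)$ has exactly one maximal consistent extension in $\Omega_H(q,d+k,w)$. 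Consequently $\Xi_1$ alone determines the entire chain $(\Xi_d)_{d\geq 1}$, and hence determines $\Xi$. The restriction map $\Xi\mapsto\Xi_1$ therefore injects the canonical space into $\Omega_H(q,1,w)$, which is finite by the preceding theorem; so the canonical space is itself finite.

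The one delicate point that needs to be addressed is the passage from a consistent tower at every finite depth to a single maximally consistent subset of $\mathcal{L}^{\ast}$. Since every formula of $\mathcal{L}^{\ast}$ has finite depth and $\Sigma_H$-inconsistency is finitary (any derivation of $\bot$ uses finitely many premises, each of bounded depth), the ascending union of a chain of compatible atoms $\Xi_1\subseteq\Xi_2\subseteq\cdots$ is both consistent and, by maximality at each level, maximal in $\mathcal{L}^{\ast}$. Given this observation, the argument is a direct assembly of the Unique Extension Theorem and the cardinality bound established in the theorem immediately above, and I do not anticipate any serious technical obstacle beyond recording it.
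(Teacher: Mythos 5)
Your proposal is correct and follows exactly the route the paper intends: the theorem is stated as an immediate consequence of the preceding result that $|\Omega_H(q,d,w)|$ is finite and constant for $d\geq 1$, combined with the Unique Extension Theorem, which is precisely the injection $\Xi\mapsto\Xi_1$ you construct. Your extra remark on why a compatible tower of atoms yields a maximally consistent subset of the full restricted language is a sensible piece of bookkeeping that the paper leaves implicit.
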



One may compare this theorem to a similar result in \cite{Aum99a} for knowledge spaces obtained through denesting in $S_5$. In the remainder of this section, we apply the notion of bi-sequence space in probability theory to the information structure in \cite{HartHS96} and construct a continuum of different $J^r$-lists (which will be defined shortly) which are all consistent. So, if the probability indices of the language are restricted within a finite set of rationals, then the canonical space of $\Sigma_H$ for at least two agents has the cardinality of the continuum.

\subsection{Bi-sequence Space}

Here we generalize the information structure in \cite{HartHS96} to the type-space setting through a notion called
bi-sequence.  Let $\Omega_s$ be the set of all pairs of infinite sequences of 0's and 1's with the same starting digit; that is,
\begin{center}
    $\Omega_s =\{(a_0a_1a_2\cdots, b_0b_1b_2\cdots):  a_k, b_k\in \{0,1\} \forall k\geq 0, a_0=b_0\}$
\end{center}

For all $k \geq 0$ let $[a_k=1]$ be the set of states whose $a_k$th coordinate is 1. Define similarly the sets $[a_k=0], [b_k=0]$ and $[b_k=1]$. Note that $[a_0=0]=[b_0=0]$ and $[a_0=1]=[b_0=1]$. Let $\mathcal{B}_s$ be the set $\{\bigcap_{m=1}^M[b_{k_m}=i_m]\cap \bigcap_{n=1}^N[a_{l_n}=j_n]: k_m,l_n\geq 0, i_m,j_n\in \{0, 1\}\}$. In other words, $\mathcal{B}_s$ is the collection of all finite intersections of the collection $\{[a_k=0], [a_k=1], [b_k=0], [b_k=1]: k\geq 0\}$. Let $\mathcal{A}_{s,0}$ be the collection of all finite unions of elements in $\mathcal{B}_s$. It is easy to check that $\mathcal{A}_{s,0}$ is an algebra.
Let $\mathcal{A}_s$ be the $\sigma$-algebra generated by $\mathcal{A}_{s,0}$.  $\mathcal{A}_s$ is simply the product $\sigma$-algebra on $\{0,1\}^{\mathbb{N}}$ restricted to $\Omega_s$.  Such defined measurable space $\langle \Omega_s, \mathcal{A}_s\rangle$ is similar to  sequence space (Section 2 in \cite{Bil95}), and is called a \emph{bi-sequence space}.

Now we define an equivalence relation $\thicksim_1$ as follows:
\begin{center}
    $(a_0a_1a_2\cdots, b_0b_1b_2\cdots)\thicksim_1(a_0'a_1'a_2'\cdots, b_0'b_1'b_2'\cdots)$ if, for all $k \geq 1$, $(a_k=a_k')$, and $( a_k=1\Rightarrow b_{k-1}=b_{k-1}')$.
\end{center}
 Let $\Pi_1$ be the partition of $\Omega_s$ into equivalence classes of $\thicksim_1$. Define $\thicksim_2$ and $\Pi_2$ analogously by interchanging the roles of $a'$s and $b'$s. So, for any $w=(a_0a_1a_2\cdots, b_0b_1b_2\cdots)\in \Omega_s$, $\Pi_1(w)$ is the set of all states which is $\thicksim_1$
 -equivalent to $w$.

\begin{defi} For any subset $D$ of $\mathbb{N}$ and any subsequence $((a_i)_{i\geq 1},(b_j)_{j\in \mathbb{N}\setminus D})$, define

\begin{center}
    $\Omega_s(((c_i)_{i\geq 1},(d_j)_{j\in \mathbb{N}\setminus D})):=\{(a_0a_1a_2\cdots, b_0b_1b_2\cdots): a_i = c_i \text{ for all } i\geq 0,  b_j=d_j \text{ for all } j\in \mathbb{N}\setminus D\}\cap\Omega_s$.
\end{center}
  In other words, $((a_i)_{i\geq 1},(b_j)_{j\in \mathbb{N}\setminus D})$ is the subsequence which is shared by all elements in $\Omega_s(((c_i)_{i\geq 1},(d_j)_{j\in \mathbb{N}\setminus D}))$.
A \emph{reduction mapping} $r_1$ from $\Omega((c_i)_{i\geq 1},(d_j)_{j\in \mathbb{N}\setminus D})$ to $\{(b_k)_{k\in D}: b_k\in \{0, 1\}\}$ is defined by
\begin{center}
    $r_1:(a_0a_1a_2\cdots, b_0b_1b_2\cdots)\mapsto (b_k)_{k\in D}$.
\end{center}
It is easy to check that this function is one-to-one.
\end{defi}

 \begin{exa} Let $w=(a_0a_1a_2\cdots, b_0b_1b_2\cdots)$ where $a_k =0$ for all odd $k$ and all other coordinates are 1. Simply $w=(101010\cdots,11111\cdots)$. Then
\begin{center}
    $\Pi_1(w)=\{(a_0'a_1'a_2'\cdots, b_0'b_1'b_2'\cdots): \forall k\geq 0(a_k'=a_k)\&
   \forall n\geq 0( b_{2n}=1)\}$
\end{center}

So an element of $\Pi_1(w)$ looks like
\begin{center}
    $(101010\cdots,1b_1'1b_3'1b_5'\cdots)$ where $b_1', b_3', b_5' \in \{0, 1\}$.
\end{center}

 There is an obvious one-to-one map between $\Pi_1(w)$ and \emph{its reduct} $r_1(\Pi_1(w)):=\{(b_1'b_3'b_5'\cdots b_{2k+1}'\cdots): k\geq 0, b_{2k+1}'\in \{0, 1\}\}$:
 \begin{center}
    $(101010\cdots,1b_1'1b_3'1b_5'\cdots)\mapsto (b_1'b_3'b_5'\cdots b_{2k+1}'\cdots)$.
 \end{center}
Since $r_1(\Pi_1(w))$ has the cardinality of the continuum, and so does $\Pi_1(w)$.

\end{exa}

For any $w\in \Omega_s$, $\Pi_1(w)$ is a subset of $\Omega_s$ of the form
    $\Omega_s((a_i)_{i\geq 1}, (b_j)_{j\in \textrm{N}\setminus D})$
for some $D\subseteq \mathbb{N}$.  Moreover, $r_1(\Pi_1(w))=\{(b_k)_{k\in K}: b_k\in \{0, 1\}\}$ is a special case of the well-known sequence space.

 \begin{lem} For any $w=(a_0a_1a_2\cdots, b_0b_1b_2\cdots)\in \Omega_s$,
    \begin{enumerate}
        \item $\Pi_1(w)$ is either finite or uncountable;
        \item $\Pi_1(w)\in \mathcal{A}_s$.
    \end{enumerate}
 \end{lem}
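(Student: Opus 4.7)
The plan is to describe $\Pi_1(w)$ explicitly as a product of free binary coordinates, read off its cardinality from the size of the index set of these free coordinates, and verify measurability by writing $\Pi_1(w)$ as a countable intersection of cylinders.

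First I would unpack the relation $\sim_1$: for $w=(a_0a_1\cdots,b_0b_1\cdots)$, a candidate $w'=(a_0'a_1'\cdots,b_0'b_1'\cdots)\in \Omega_s$ lies in $\Pi_1(w)$ exactly when $a_k'=a_k$ for all $k\geq 1$, and $b_{k-1}'=b_{k-1}$ whenever $a_k=1$. Thus the only $b$-coordinates that are not forced are those indexed by $D:=\{j\geq 0:a_{j+1}=0\}$, while every $a_k'$ for $k\geq 1$ is forced. The remaining coordinate $a_0'$ is pinned down by the $\Omega_s$ constraint $a_0'=b_0'$: if $0\in D$ then $b_0'$ is free and $a_0'=b_0'$ follows it, while if $0\notin D$ then $b_0'=b_0$ is fixed and so is $a_0'$. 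In either case, specifying $(b_j')_{j\in D}\in\{0,1\}^D$ determines $w'$ uniquely, so the reduction map $r_1$ is a bijection $\Pi_1(w)\to \{0,1\}^D$.

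For part 1, a direct case split finishes the job: if $D$ is finite then $|\Pi_1(w)|=2^{|D|}$ is finite, and if $D$ is infinite then $\Pi_1(w)$ is in bijection with $\{0,1\}^D$, which has the cardinality of the continuum. For part 2, I would simply write
\[
\Pi_1(w)\;=\;\bigcap_{k\geq 1}[a_k=a_k]\;\cap\;\bigcap_{\substack{k\geq 1\\ a_k=1}}[b_{k-1}=b_{k-1}],
\]
where each cylinder $[a_k=i]$ and $[b_k=i]$ belongs to $\mathcal{B}_s\subseteq \mathcal{A}_{s,0}\subseteq \mathcal{A}_s$. Since this is a countable intersection and $\mathcal{A}_s$ is a $\sigma$-algebra, $\Pi_1(w)\in \mathcal{A}_s$.

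There is no serious obstacle here; the only mildly delicate point is the bookkeeping around the coordinate $a_0$, which must be argued to be redundant (forced by $b_0$ via the $\Omega_s$ constraint), so that the reduction mapping $r_1$ really gives a bijection rather than only an injection. Once that is in place, the cardinality dichotomy and the measurability are both essentially immediate.
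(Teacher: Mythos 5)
Your proposal is correct and follows essentially the same route as the paper: the paper also observes that $\Pi_1(w)$ is a countable intersection of coordinate events $[a_k=i]$, $[b_k=j]$ (giving measurability immediately), and derives the cardinality dichotomy from the set $N_0(w)$ of positions where the $a$-coordinate is $0$, which is your index set $D$ up to a shift. Your treatment is somewhat more careful than the paper's terse argument — in particular the explicit bijection with $\{0,1\}^D$ and the bookkeeping for $a_0$ via the constraint $a_0'=b_0'$ — but the underlying idea is identical.
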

\begin{proof} For any $w=(a_0a_1a_2\cdots, b_0b_1b_2\cdots)\in \Omega_s$,
$\Pi_1(w)$ is the intersection of countably many events of $[a_k=i]$ or $[b_k=j]$ where $k\geq 0$ and $i, j\in \{0, 1\}$.

\begin{enumerate}
    \item Let $N_0(w):=\{k: $ the $a_k$th coordinate of $w$ is 0$\}$. It is easy to see that
        \begin{itemize}
            \item If $N_0(w)$ is countably infinite, then $\Pi_1(w)$ has the cardinality of the continuum.
            \item If $N_0(w)$ is finite, then $\Pi_1(w)$ is finite.
        \end{itemize}

    \item Since $\Pi_1(w)$ is the countable intersection of events $[a_k =i]$ or $[b_k=j]$ in $\mathcal{A}_S$, $\Pi_1(w)$ is $\mathcal{A}_S$-measurable.\qedhere
\end{enumerate}
\end{proof}

\begin{exa}\label{SeqSpace} (Sequence space) Let $S=\{0,1\}^{\infty}$ be the space of all infinite
sequences
\begin{center}
    $w=(z_0(w), z_1(w), z_2(w), \cdots)$
\end{center}
where $z_k$ is the $k$-th coordinate function mapping $S$ to $\{0, 1\}$.  A \emph{cylinder of rank n} is a set of the form
\begin{center}
    $A=\{w: (z_0(w), z_1(w), \cdots, z_n(w))\in H)\}$
\end{center}
where $H \subseteq \{0, 1\}^n$.  Let $\mathcal{C}_0$ be the set of cylinders of all ranks. It is easy to check that $\mathcal{C}_0$ is an algebra. Let $p_0=1/2$ and $p_1=1/2$.  For a  cylinder $A$ given above, define

\begin{center}
    $P(A)= \sum_H p_{u_0}p_{u_1}\cdots p_{u_n}$
\end{center}
the sum extending over all the sequences $(u_0, u_1, \cdots, u_n)$ in $H$.

Such a defined $P$ is a probability measure on the algebra $\mathcal{C}_0$ and moreover it can be uniquely extended to a probability measure on the $\sigma$-algebra $\mathcal{C}$ generated by $\mathcal{C}_0$. The interested reader may refer to Section 2 in \cite{Bil95} for more details about sequence space.

\end{exa}

In order to make the measurable space
$\langle \Omega_s, \mathcal{A}_s\rangle$ a Harsanyi type space, we will define a type function on it in three steps:
\begin{itemize}
    \item Step 1: for each state $w$ and its equivalence class $\Pi_1(w)$ of the form $\Omega_s((a_i)_{i\geq 1}, (b_j)_{j\in \textrm{N}\setminus D})$ for some $D \subseteq \mathbb{N}$, first we define a probability measure $P_{\Pi_1(w)}$ on its reduct $r_1(\Pi_1(w))$, which is finite or  a sequence space as in the above example.
    \item Step 2: according to the probability measure on $r_1(\Pi_1(w))$, we next \emph{derive} a probability measure $P_{r(\Pi_1(w))}$ on the equivalence class $\Pi_1(w)$.
    \item Step 3: finally we define a type function $T_1$ for agent 1 on the measurable space $\langle \Omega_s, \mathcal{A}_s\rangle$ such that $\langle \Omega_s, \mathcal{A}_s, T_1\rangle$ is Harsanyi type space.

\end{itemize}

   \noindent If $D$ is finite, then $\Pi_1(w)$ is finite and we define the probability measure $P_{\Pi_1(w)}$ on
    the measurable space $\langle \Pi_1(w), 2^{\Pi_1(w)}\rangle$ to be the uniform distribution on $\Pi_1(w)$. In other words, for any $w'\in \Pi_1(w)$, $P_{\Pi_1(w)}(\{w'\})=\frac{1}{|\Pi_1(w)|}$.

   If $D$ has the cardinality of the continuum, then $r_1(\Pi_1(w))$ is a sequence space with a $\sigma$-algebra $\mathcal{C}_{\Pi_1(w)}$  and a probability measure $P_{r_1(\Pi_1(w))}$ as in the above Example \ref{SeqSpace}.  Since $r_1$ is one to-one, it is easy to check that $r_1^{-1}(\mathcal{C}_{\Pi_1(w)})$ is a $\sigma$-algebra on $\Pi_1(w)$.  From $P_{r_1(\Pi_1(w))}$ on the sequence space $r_1(\Pi_1(w))$, we derive  the probability measure $P_{\Pi_1(w)}$ on the measurable space $\langle \Pi_1(w), r_1^{-1}(\mathcal{C}_{\Pi_1(w)})\rangle$ by
        \begin{center}
            $P_{\Pi_1(w)}(A):=P_{r_1(\Pi_1(w))}(r_1(A))$ for any $A\in r_1^{-1}(\mathcal{C}_{\Pi_1(w)})$.
        \end{center}

So we have finished the first two steps. Now we start the third step by defining the type function. For any $w\in \Omega_s$ and $E\in \mathcal{A}_s$, define
\begin{center}
    $T_1(w)(E): = P_{\Pi_1(w)}(E\cap \Pi_1(w))$
\end{center}
It is easy to see that $T_1(w)(\Pi_1(w))=1$.  In some sense, $T_1(w)$ is a probability measure on $\langle \Omega_s, \mathcal{A}_s\rangle$ concentrating on the equivalence class $\Pi_1(w)$.

\begin{prop}\label{TypeFunction} Such a defined $T_1$ is a type function on the measurable space $\langle \Omega_s, \mathcal{A}_s\rangle$.

\end{prop}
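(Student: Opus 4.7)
The plan is to verify the two conditions characterizing a Markovian kernel, which by the Remark following the definition of type space is equivalent to $T_1$ being a type function: (a) for each $w \in \Omega_s$, $T_1(w, \cdot)$ is a probability measure on $\mathcal{A}_s$; (b) for each $E \in \mathcal{A}_s$, the map $w \mapsto T_1(w, E)$ is $\mathcal{A}_s$-measurable, equivalently $\{w: T_1(w, E) \geq r\} \in \mathcal{A}_s$ for every rational $r \in [0,1]$.

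For (a) the key preliminary is to identify the trace $\sigma$-algebra $\mathcal{A}_s|_{\Pi_1(w)} := \{E \cap \Pi_1(w): E \in \mathcal{A}_s\}$: it equals $2^{\Pi_1(w)}$ when $\Pi_1(w)$ is finite, and equals $r_1^{-1}(\mathcal{C}_{\Pi_1(w)})$ when $\Pi_1(w)$ has the cardinality of the continuum. This is a direct generator-chasing argument, since $[a_k=i] \cap \Pi_1(w)$ is either $\Pi_1(w)$ or $\emptyset$, while $[b_k=j] \cap \Pi_1(w)$ is either $\Pi_1(w)$, $\emptyset$, or the $r_1$-preimage of the corresponding one-dimensional cylinder in the sequence space $r_1(\Pi_1(w))$. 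Once this identification is in place, $P_{\Pi_1(w)}$ is defined on all of $\mathcal{A}_s|_{\Pi_1(w)}$, so the formula $T_1(w)(E) := P_{\Pi_1(w)}(E \cap \Pi_1(w))$ yields a well-defined probability measure on $\mathcal{A}_s$ (with $T_1(w)(\Pi_1(w)) = 1$, as noted).

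For (b), I would invoke Lemma \ref{MainLemma} to reduce measurability to $E$ in the generating algebra $\mathcal{A}_{s,0}$, and by finite additivity further to $E \in \mathcal{B}_s$, say $E = \bigcap_{m=1}^M [b_{k_m}=i_m] \cap \bigcap_{n=1}^N [a_{l_n}=j_n]$. For such $E$ I would derive the explicit formula: $T_1(w, E) = 0$ whenever some $a_{l_n}(w) \neq j_n$, or some $b_{k_m}$ that is fixed in $\Pi_1(w)$ (i.e., $k_m = 0$, or $k_m \geq 1$ and $a_{k_m+1}(w) = 1$) disagrees with $i_m$; otherwise $T_1(w, E) = (1/2)^{f(w)}$, where $f(w)$ counts those $k_m \geq 1$ with $a_{k_m+1}(w) = 0$. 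The same formula applies in both the finite and the uncountable cases, because the uniform distribution on a finite $\Pi_1(w)$ agrees with the restriction of the product Bernoulli$(1/2)$-measure to the free coordinates (as in Example \ref{SeqSpace}). Since all data entering the formula depend on only finitely many coordinates of $w$ (the $a_{l_n}$, the $a_{k_m+1}$, and the fixed $b_{k_m}$), $T_1(\cdot, E)$ is a finite-valued simple function whose level sets are finite Boolean combinations of coordinate cylinders and hence lie in $\mathcal{A}_{s,0} \subseteq \mathcal{A}_s$.

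The main obstacle I anticipate is the unification of the finite-$\Pi_1(w)$ and uncountable-$\Pi_1(w)$ regimes: one must bookkeep, as a function of $w$ alone, which $b$-indices in $E$ are fixed versus free in $\Pi_1(w)$ (a property read off the $a$-coordinates of $w$), and then verify that the bijection $r_1$ pulls back the sequence-space product measure to the same count $(1/2)^{f(w)}$ that the uniform distribution gives in the finite case. Once this unified formula is in hand, both (a) and (b) follow cleanly.
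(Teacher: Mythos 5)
Your proof is correct, and its measurability core follows the paper's own route: reduce to the generating algebra $\mathcal{A}_{s,0}$ via Lemma \ref{MainLemma}, reduce further to $\mathcal{B}_s$ by finite additivity over disjoint unions (Theorem \ref{Algebra}), and exhibit $T_1(\cdot,E)$ for $E\in\mathcal{B}_s$ as a simple function whose level sets are finite Boolean combinations of coordinate cylinders. You go beyond the paper in two respects. First, where the paper works only the single illustrative case $E=[b_1=0]\cap[a_1=1]$ and asserts the other cases are similar, you give the closed form $(1/2)^{f(w)}$ for an arbitrary element of $\mathcal{B}_s$, together with the observation that the uniform distribution on a finite $\Pi_1(w)$ and the Bernoulli$(1/2)$ product measure of Example \ref{SeqSpace} assign the same mass to cylinder traces; this is exactly the bookkeeping the paper leaves implicit. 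Second, you verify the hypothesis of Lemma \ref{MainLemma} that $T_1(w,\cdot)$ is a probability measure on all of $\mathcal{A}_s$, by identifying the trace $\sigma$-algebra $\{E\cap\Pi_1(w):E\in\mathcal{A}_s\}$ with $2^{\Pi_1(w)}$ in the finite case and with $r_1^{-1}(\mathcal{C}_{\Pi_1(w)})$ in the uncountable case; the paper never checks that $E\cap\Pi_1(w)$ lies in the domain of $P_{\Pi_1(w)}$ for general $E\in\mathcal{A}_s$, so this closes a real gap in the published argument rather than duplicating it. One small caveat: in the $k_m=0$ branch you treat $b_0$ as always fixed in $\Pi_1(w)$, whereas the literal definition of $\thicksim_1$ (which constrains $a_k$ only for $k\geq 1$) fixes $b_0$ only when $a_1(w)=1$; your reading agrees with the paper's own worked example of $\Pi_1(w)$, so this is an ambiguity inherited from the paper, but it deserves an explicit remark when you state the formula.
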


In order to show the proposition, we need to prove several lemmas. Recall that $B_1^{\geq r}(E)$ denotes the \emph{set} $\{w\in \Omega_s: T_1(w, E)\geq r\}$ and $B_1^{=r}(E)$ the \emph{set} $\{w\in \Omega_s: T_1(w, E)=r\}$. Let $\alpha_k$ and $\beta_k(k\in \textrm{N})$ be the two \emph{coordinate functions} from $\Omega_s$ to $\{0, 1\}$:

\begin{center} $\alpha_k: ((a_i)_{i\in \textrm{N}},(b_j)_{j\in \textrm{N}})\mapsto a_k$ and
$\beta_k: ((a_i)_{i\in \textrm{N}},(b_j)_{j\in \textrm{N}})\mapsto b_k$.
\end{center}

\begin{prop}  For any event $E=\bigcap_{m=1}^M[b_{k_m}=i_m]\cap \bigcap_{n=1}^N[a_{l_n}=j_n](k_m,l_n\geq 0, i_m,j_n\in \{0, 1\})\in \mathcal{B}_s$, $B^{\geq r}_1(E)\in \mathcal{A}_s$ for all $r\in [0, 1]$, or equivalently, the function $T_1(\cdot, E)$ is $\mathcal{A}_s$-measurable.

\end{prop}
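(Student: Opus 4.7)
The plan is to compute $T_1(w, E)$ explicitly as a simple function of finitely many coordinate functions of $w$, and then read off $\mathcal{A}_s$-measurability from the fact that each coordinate function $\alpha_k, \beta_k$ is $\mathcal{A}_s$-measurable. Since $T_1(w, \cdot)$ is by construction supported on the equivalence class $\Pi_1(w)$, I first recall that $T_1(w, E) = P_{\Pi_1(w)}(E \cap \Pi_1(w))$. Within $\Pi_1(w)$, all the $a$-coordinates with index $\geq 1$ are frozen to their values at $w$, and each $b_{k-1}$ with $a_k(w) = 1$ (for $k \geq 1$) is also frozen to $b_{k-1}(w)$; the remaining ``free'' $b$-coordinates (those $b_{k-1}$ with $a_k(w) = 0$) are, by the construction in Step 1 and Step 2 preceding Proposition \ref{TypeFunction}, distributed as i.i.d.\ Bernoulli$(1/2)$ under $P_{\Pi_1(w)}$.

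Next, fix the basic event $E = \bigcap_{m=1}^M [b_{k_m}=i_m] \cap \bigcap_{n=1}^N [a_{l_n}=j_n]$; without loss of generality the $k_m$ are pairwise distinct and the $l_n$ are pairwise distinct (otherwise $T_1(w, E)$ is either $0$ identically or reduces to a smaller intersection). For each $a$-condition with $l_n \geq 1$, the constraint restricted to $\Pi_1(w)$ either holds throughout $\Pi_1(w)$ or fails throughout, contributing an indicator factor $\mathbf{1}\{\alpha_{l_n}(w) = j_n\}$. For each $b$-condition, one distinguishes by the value of $\alpha_{k_m+1}(w)$: if $\alpha_{k_m+1}(w) = 1$, then $b_{k_m}$ is frozen on $\Pi_1(w)$, contributing the factor $\mathbf{1}\{\beta_{k_m}(w) = i_m\}$; if $\alpha_{k_m+1}(w) = 0$, then $b_{k_m}$ is free and independent of the other free coordinates, contributing the factor $\tfrac{1}{2}$. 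The coordinate $a_0 = b_0$ (and any condition involving $l_n = 0$ or $k_m = 0$) is handled by the same rule applied with $\alpha_1(w)$ in the role of the ``deciding'' coordinate.

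By independence of the free coordinates under $P_{\Pi_1(w)}$, the probability $T_1(w, E)$ factors as a finite product of these per-coordinate contributions. Each factor is a simple function of finitely many values among $\alpha_1(w), \ldots, \alpha_{L}(w), \beta_0(w), \ldots, \beta_{K}(w)$ with $L, K$ depending only on $E$; each coordinate function is $\mathcal{A}_s$-measurable because the cylinder sets $[\alpha_k = i], [\beta_k = j]$ generate $\mathcal{A}_s$. A finite product of $\mathcal{A}_s$-measurable simple functions is $\mathcal{A}_s$-measurable, so $T_1(\cdot, E)$ is $\mathcal{A}_s$-measurable, and in particular $B_1^{\geq r}(E) = \{w : T_1(w, E) \geq r\} \in \mathcal{A}_s$ for every $r \in [0,1]$.

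The main obstacle will be the bookkeeping of cases: the very notion of ``free vs.\ fixed'' for $\beta_{k_m}$ depends on the value of $\alpha_{k_m+1}(w)$, so the closed-form expression for $T_1(w, E)$ is piecewise across finitely many cylinder pieces determined by the vector $(\alpha_{k_1+1}(w), \ldots, \alpha_{k_M+1}(w), \alpha_1(w))$. One must check that on each such cylinder the corresponding simple expression is measurable and that the pieces patch together to a single measurable function; this is routine but requires careful attention to the coupling $\alpha_0 = \beta_0$ imposed by the definition of $\Omega_s$. Once the piecewise formula is in hand, measurability is immediate.
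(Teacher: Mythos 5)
Your argument is correct, and it is in fact more complete than the paper's own proof. The paper only works out the single instance $E=[b_1=0]\cap[a_1=1]$, splitting into four cases according to $(\alpha_1(w),\alpha_2(w),\beta_1(w))$ and observing that $T_1(\cdot,E)$ is constant on each of the resulting cylinder pieces, and then asserts that ``the proof of other cases is similar.'' You instead derive the general closed form: on $\Pi_1(w)$ the $a_k$ ($k\geq 1$) and the $b_{k-1}$ with $\alpha_k(w)=1$ are frozen, the remaining $b$-coordinates are i.i.d.\ Bernoulli$(1/2)$ under $P_{\Pi_1(w)}$ (uniform in the finite case, coin-flipping measure in the infinite case), so $T_1(w,E)$ factors as a product of indicators $\mathbf{1}\{\alpha_{l_n}(w)=j_n\}$, indicators $\mathbf{1}\{\beta_{k_m}(w)=i_m\}$ (when $\alpha_{k_m+1}(w)=1$), and factors $\tfrac12$ (when $\alpha_{k_m+1}(w)=0$). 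This reproduces the paper's four-case computation as a special instance and actually proves the proposition for an arbitrary element of $\mathcal{B}_s$, which is what the statement requires. The underlying mechanism is the same in both treatments --- $T_1(\cdot,E)$ takes finitely many values on cylinder sets determined by finitely many coordinate functions, hence is $\mathcal{A}_s$-measurable --- but your product decomposition via the independence of the free coordinates is what makes the general case go through cleanly. The one point you flag but do not fully close is the identification $[a_0=j]=[b_0=j]$: if $E$ lists both an $a_0$-constraint and a $b_0$-constraint, a naive product would double-count the factor $\tfrac12$ when $\alpha_1(w)=0$; you should state explicitly that such a pair is first merged into a single constraint (or declared contradictory, giving $T_1(\cdot,E)\equiv 0$) before the factorization is applied. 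With that sentence added, your proof is a complete and strictly stronger substitute for the one in the paper.
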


\begin{proof}  Here we take $E=[b_1=0]\cap [a_1=1]$  as an illustration. The proof of other cases is similar.
Given a state $w\in \Omega_s$, we divide the proof into the following cases:
    \begin{itemize}
        \item Case 1: $\alpha_1(w)=0$.  In this case $T_1(w, E)=0$
        \item Case 2: $\alpha_1(w)=1, \alpha_2(w)=1$ and $\beta_1(w)=0$. In this case, $T_1(w, E)=1$.
        \item Case 3: $\alpha_1(w)=1, \alpha_2(w)=1$ and $\beta_1(w)=1$. In this case, $T_1(w, E)=0$
        \item Case 4: $\alpha_1(w)=1, \alpha_2(w)=0$. In this case, $T_1(w, E)=1/2$.
    \end{itemize}
    So $T_1(\cdot, E)$ is an $\mathcal{A}_s$-measurable function.
\end{proof}

\begin{thm}\label{Algebra}  For any event $E\in \mathcal{A}_{s,0}$, $B^{\geq r}_1(E)\in \mathcal{A}_s$ for all $r\in [0, 1]$, or equivalently, the function $T(\cdot, E)$ is $\mathcal{A}_s$-measurable.
\end{thm}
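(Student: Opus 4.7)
The plan is to lift the preceding proposition (which gives measurability of $T_1(\cdot, E)$ for the basic intersections $E \in \mathcal{B}_s$) to the whole algebra $\mathcal{A}_{s,0}$ by exploiting finite additivity of $T_1(w, \cdot)$ as a probability measure, together with the fact that $\mathcal{B}_s$ is closed under finite intersections. The key observation is that intersecting two finite conjunctions of basic cylinder constraints $[a_k = i]$ or $[b_k = j]$ yields either another such conjunction (hence an element of $\mathcal{B}_s$) or the empty set, the latter arising precisely when contradictory constraints like $[a_k = 0] \cap [a_k = 1]$ appear.

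First I would take an arbitrary $E \in \mathcal{A}_{s,0}$ and write it as $E = \bigcup_{i=1}^n E_i$ with each $E_i \in \mathcal{B}_s$, which is possible by the very definition of $\mathcal{A}_{s,0}$. Applying inclusion--exclusion to the probability measure $T_1(w, \cdot)$ yields
\[
T_1(w, E) \;=\; \sum_{\emptyset \neq I \subseteq \{1, \ldots, n\}} (-1)^{|I|+1}\, T_1\!\left(w, \bigcap_{i \in I} E_i\right).
\]
For each nonempty $I$ the intersection $\bigcap_{i \in I} E_i$ is either an element of $\mathcal{B}_s$ or is empty, in which case $T_1(w, \emptyset) = 0$ identically.

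Next, by the preceding proposition, each map $w \mapsto T_1(w, \bigcap_{i \in I} E_i)$ is $\mathcal{A}_s$-measurable. A finite $\mathbb{R}$-linear combination of $\mathcal{A}_s$-measurable real-valued functions is again $\mathcal{A}_s$-measurable, so the right-hand side above is an $\mathcal{A}_s$-measurable function of $w$, and hence so is $T_1(\cdot, E)$. Consequently $B_1^{\geq r}(E) = T_1(\cdot, E)^{-1}([r, 1])$ belongs to $\mathcal{A}_s$ for every $r \in [0,1]$, which is the stated conclusion.

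The only delicate point, rather than a genuine obstacle, is verifying carefully that $\mathcal{B}_s$ is closed under finite intersection modulo the empty set; this is immediate from the definition of $\mathcal{B}_s$, since an intersection of finitely many basic constraints is again a (possibly longer) intersection of basic constraints, with inconsistency of coordinate values being the sole source of emptiness. Once this is noted, the inclusion--exclusion expansion gives the result in one line.
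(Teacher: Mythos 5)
Your proof is correct, but it routes around the union in a different way than the paper does. The paper's proof asserts that every $E\in\mathcal{A}_{s,0}$ can be written as a finite \emph{disjoint} union $E=\biguplus_{i=1}^n E_i$ of elements of $\mathcal{B}_s$ and then simply invokes finite additivity, $T_1(w,E)=\sum_{i=1}^n T_1(w,E_i)$, so that $T_1(\cdot,E)$ is a finite sum of measurable functions. That disjointness claim is left unjustified in the paper (it holds because the complement of each basic constraint $[a_k=i]$ or $[b_k=j]$ is again a basic constraint, so $\mathcal{B}_s$ is a semi-algebra and the standard disjointification applies), whereas you avoid it entirely by taking an arbitrary finite union and applying inclusion--exclusion, which instead requires that $\mathcal{B}_s$ be closed under finite intersections modulo $\emptyset$ --- immediate from its definition, and the preceding proposition does cover arbitrary finite intersections of the basic constraints, so each term $T_1(\cdot,\bigcap_{i\in I}E_i)$ in your expansion is indeed covered (with $T_1(\cdot,\emptyset)\equiv 0$ trivially measurable). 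Both arguments then conclude by the closure of measurable real-valued functions under finite linear combinations. The trade-off is that your version replaces an unproved structural claim about $\mathcal{A}_{s,0}$ with a trivially verified one about $\mathcal{B}_s$, at the cost of a slightly longer sum; either is a complete proof.
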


\begin{proof} Given an event $E\in \mathcal{A}_{s,0}$, we know that it is a finite \emph{disjoint} union of events in $\mathcal{B}_s$. That is to say, $E = \uplus_{i=1}^n E_i $ for some events $ E_i\in \mathcal{B}_s(i=1, \cdots, n)$. It follows that, for any $w\in \Omega_s$,
\begin{center}
    $T_1(w, E)= \sum_{i=1}^n T_1(w, E_i)$
\end{center}
 Since all $T_1(\cdot, E_i)$'s are $\mathcal{A}_s$-measurable, so is $T_1(\cdot, E)$.
\end{proof}

Proof of Proposition \ref{TypeFunction}:  In Theorem \ref{Algebra}, we have shown that, for any $E\in \mathcal{A}_{s,0}$, $T_1(\cdot, E)$ is an $\mathcal{A}_s$-measurable function. By Lemma \ref{MainLemma} , we know that, for any $E\in \mathcal{A}_s$, $T_1(\cdot, E)$ is also an $\mathcal{A}_s$-measurable function. So indeed $T_1$ is a type function.

\begin{thm} $\langle \Omega_s, \mathcal{A}_s, T_1\rangle$ is a Harsanyi type space.
\end{thm}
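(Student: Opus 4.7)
The plan is to verify the defining condition $(H)$ from the beginning of Section 3: for every $w\in\Omega_s$ and every $E\in\mathcal{A}_s$ with $[T_1(w)]\subseteq E$, we need $T_1(w)(E)=1$. Since $\mathcal{A}_s$ is generated by the countable subalgebra $\mathcal{A}_{s,0}$, condition $(H)$ is equivalent to the alternative formulation $(H')$ discussed in the same paragraph, so either form is available. I will aim directly at $(H)$ by establishing two facts: (a) the type $T_1$ is constant on $\sim_1$-equivalence classes, so that $\Pi_1(w)\subseteq [T_1(w)]$; and (b) $T_1(w)$ concentrates on $\Pi_1(w)$, i.e.\ $T_1(w)(\Pi_1(w))=1$.

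For (a), I would first check that $\sim_1$ is in fact an equivalence relation: reflexivity and symmetry are immediate from its symmetric definition, while transitivity uses that $a_k=a_k'$ and $a_k'=a_k''$ force $a_k=a_k''$, and that $a_k=1$ propagates the equality of the corresponding $b_{k-1}$-coordinates through both steps. Consequently, $w'\in\Pi_1(w)$ implies $\Pi_1(w')=\Pi_1(w)$, so the underlying probability measure $P_{\Pi_1(w')}$ coincides with $P_{\Pi_1(w)}$. From the definition $T_1(w)(E):=P_{\Pi_1(w)}(E\cap\Pi_1(w))$ we then get $T_1(w')(E)=T_1(w)(E)$ for every $E\in\mathcal{A}_s$, i.e.\ $T_1(w')=T_1(w)$, which is exactly the inclusion $\Pi_1(w)\subseteq[T_1(w)]$.

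For (b), recall that $\Pi_1(w)\in\mathcal{A}_s$ by the earlier lemma, and that $T_1(w)(\Pi_1(w))=P_{\Pi_1(w)}(\Pi_1(w))=1$ directly from the construction of $P_{\Pi_1(w)}$, both in the finite case (uniform measure on $\Pi_1(w)$) and in the continuum case (product measure on the sequence space $r_1(\Pi_1(w))$ transported back along the bijection $r_1$). Combining (a) and (b): if $[T_1(w)]\subseteq E\in\mathcal{A}_s$, then $\Pi_1(w)\subseteq E$, whence $T_1(w)(E)\geq T_1(w)(\Pi_1(w))=1$, so $T_1(w)(E)=1$. This is condition $(H)$, and the space is Harsanyi. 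There is no substantive obstacle beyond bookkeeping; the only delicate point is that $[T_1(w)]$ itself need not be measurable a priori, but the measurability of the witnessing subset $\Pi_1(w)$, guaranteed by the countable generator $\mathcal{A}_{s,0}$, is exactly what makes the reduction to the two facts above legitimate.
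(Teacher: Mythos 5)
Your proof is correct and is essentially the argument the paper intends: the paper's one-line proof ("this follows directly from the definition of $T_1$") is precisely the observation that $T_1(w)$ concentrates on the measurable set $\Pi_1(w)$, which is contained in $[T_1(w)]$ because $T_1$ is constant on $\thicksim_1$-classes, so any measurable $E\supseteq[T_1(w)]$ gets measure $1$ by monotonicity. You have simply spelled out the bookkeeping the paper leaves implicit.
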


\begin{proof} This follows directly from the definition of $T_1$.
\end{proof}

Note that all the corresponding notions $r_2, T_2$
 and $B_2^{\geq r}$ for agent 2 can be defined dually by interchanging the roles of a's and b's.  And the corresponding propositions also hold for agent 2.

\subsection{$J^r$-lists}

Let $X$ be the event $\{(a_0a_1\cdots, b_0b_1\cdots)\in \Omega_s: a_0=b_0=1\}$. For agent $i$'s, define two new  operators $J_i^{\geq r}: \mathcal{A}_s\rightarrow \mathcal{A}_s$ by
\begin{center} $J_i^rE := (B_i^{\geq r} E)\cup (B_i^{\geq r} \neg E)(i=1, 2, r\in [0, 1]\cap \mathbb{Q})$
\end{center}
Note that $J_i^rE = J_i^r \neg E$.  Let $s=(E_1, E_2, \cdots, E_n)$ be a list of events. $s_{last}$ denotes the last event $E_n$ in the list and $s_{initial}$ denotes the initial segment of $s$ excluding the last event $s_{last}$.

\begin{defi} For  $r\in [0, 1]$, a (finite)\emph{$J^r$-list} is defined inductively as follows:
    \begin{enumerate}
        \item $(X)$ and $(\neg X)$ are $J^r$-lists;
        \item If $s$ is a $J^r$-list and the operator of $s_{last}$ is $J_1^r$ or $\neg J_1^r$, then $(s, J_2^rs_{last})$ and $(s, \neg J_2^rs_{last})$ are both $J^r$-lists;
        \item If $s$ is a $J^r$-list and the operator of $s_{last}$ is $J_2^r$ or $\neg J_2^r$, then $(s, J_1^rs_{last})$ and $(s, \neg J_1^rs_{last})$ are both $J^r$-lists;

    \end{enumerate}
\end{defi}
\newpage
According to the definition $(X)$ is a $J^r$-list. Applying both $J_1^r$ and $\neg J_2^r$ to $X$, we have
\begin{center} $(X, J_1^r X), (X, \neg J_2^r X)$
\end{center}
Both of them are $J^r$-lists.  Next adding $\neg J_2^r$ and $J_1^r$ to the last events of the lists, respectively, we have
\begin{center} $(X, J_1^r X,\neg J_2^rJ_1^r X ), (X, \neg J_2^r X,J_1^r\neg J_2^r X )$.
\end{center}
Both of them are $J^r$-lists.  We can go on, ad infinitum, generating infinitely many \emph{infinite} $J^r$-lists. It is easy to see that  the set $S_{\infty}$ of infinite $J^r$-lists has the cardinality of the continuum.   A (finite or infinite) $J^r$-list is \emph{consistent}\footnote{This notion id different from the consistency of formulas that we define in Section 2.} if the intersection of all the events in the list is nonempty.

Since $J_i^rE=J_i^r\neg E$ for any event $E$, the above lists are also the same as
\begin{center} $(X, J_1^r X,\neg J_2^rJ_1^r X ), (X, \neg J_2^r X, J_1^r J_2^r X )$.
\end{center}
That is to say, we can omit all complementation signs \emph{inside} an event.
 When $r\leq 1/2$,  the event $\neg J_i^r E = \neg B_i^{\geq r} E \cap \neg B_i^{\geq r} \neg E$ is an empty set. This implies that when $r\leq 1/2$, all \emph{negated} $J_i^r$-events are not consistent and hence not all $J^r$-lists are consistent.

However, when $r>1/2$, $J_i^r$ is called a ``strongly believing whether" operator for agent $i$ and we show that all $J^r$-lists are consistent.

\begin{lem} Let $r >1/2$.  For all $k\geq 0$,
    \begin{itemize}
        \item $[a_k=1]= \underbrace{J_1^rJ_2^rJ_1^r\cdots}_{k} X, [a_k=0]= \neg \underbrace{J_1^rJ_2^rJ_1^r\cdots}_{k} X$
        \item $[b_k=1]= \underbrace{J_2^rJ_1^rJ_2^r\cdots}_{k} X, [b_k=0]= \neg \underbrace{J_2^rJ_1^rJ_2^r\cdots}_{k} X$
    \end{itemize}
\end{lem}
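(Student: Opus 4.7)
The plan is to prove both bullet points simultaneously by induction on $k$, reducing everything to two auxiliary one-step identities:
\begin{itemize}
\item[(a)] $J_1^r[b_k=1]=[a_{k+1}=1]$ for every $k\geq 0$,
\item[(b)] $J_2^r[a_k=1]=[b_{k+1}=1]$ for every $k\geq 0$.
\end{itemize}
The base case $k=0$ is the identity $X=[a_0=1]=[b_0=1]$, which is immediate from the defining constraint $a_0=b_0$ of $\Omega_s$ and the convention that the empty composition of operators is the identity. Assuming (a), (b) and the inductive hypothesis for both coordinate families, the inductive step for $a$ unfolds as
\[
\underbrace{J_1^rJ_2^rJ_1^r\cdots}_{k+1}X \;=\; J_1^r\bigl(\underbrace{J_2^rJ_1^r\cdots}_{k}X\bigr) \;=\; J_1^r[b_k=1] \;=\; [a_{k+1}=1],
\]
using the $b$-hypothesis in the middle equality and (a) at the end; the $b$-step is symmetric.

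The heart of the proof is therefore (a) (and its symmetric twin (b)). I would fix $w\in\Omega_s$ and compute $T_1(w)([b_k=1])$ directly from the definition of $\Pi_1(w)$. The key observation from the defining relation of $\thicksim_1$ is that the coordinate $b_k$ is held fixed across the whole equivalence class $\Pi_1(w)$ precisely when the ``pointer'' $a_{k+1}(w)=1$; otherwise $k$ belongs to the index set $D$ of free $b$-coordinates that parametrize the reduct $r_1(\Pi_1(w))$. Since $P_{r_1(\Pi_1(w))}$ is either uniform (when $\Pi_1(w)$ is finite) or the symmetric Bernoulli product measure of Example \ref{SeqSpace} (when infinite), its marginal on any free coordinate is exactly $1/2$. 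Thus two cases arise:
\begin{itemize}
\item if $a_{k+1}(w)=1$, then $T_1(w)([b_k=1])\in\{0,1\}$, so $w\in B_1^{\geq r}[b_k=1]\cup B_1^{\geq r}[b_k=0]=J_1^r[b_k=1]$;
\item if $a_{k+1}(w)=0$, then $T_1(w)([b_k=1])=T_1(w)([b_k=0])=\tfrac{1}{2}<r$, so $w\notin J_1^r[b_k=1]$.
\end{itemize}
This yields (a); identity (b) follows by the dual construction with $T_2$, $\thicksim_2$, $r_2$. The ``$\neg$''-half of each bullet is then obtained by complementing the positive half.

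The only subtlety I anticipate is at the boundary $k=0$: when $a_1(w)=0$, the free coordinate $b_0$ is linked to $a_0$ by the global constraint $a_0=b_0$, so one must confirm that the $\tfrac{1}{2}$\,-\,$\tfrac{1}{2}$ marginal survives this identification. But this is built into the definition of the reduction map $r_1$ (which simply does not record $a_0$ separately), so no new calculation is needed. Everything else is a routine induction whose correctness hinges entirely on the strict inequality $r>\tfrac{1}{2}$, which is exactly what rules out the ambiguous case $T_i(w)(\cdot)=\tfrac{1}{2}$.
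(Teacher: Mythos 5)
Your proof is correct and follows essentially the same route as the paper's: an induction on $k$ whose entire content is the one-step identity $J_1^r[b_{k-1}=1]=[a_k=1]$ (and its dual), proved by the same two-case analysis — if the relevant $a$-coordinate is $1$ then $b_{k-1}$ is constant on $\Pi_1(w)$ and the $J_1^r$-membership holds with probability $1$, while if it is $0$ the free coordinate carries marginal $\tfrac12<r$, excluding membership. The only difference is cosmetic reindexing and your explicit handling of the $a_0=b_0$ identification at $k=0$, which the paper leaves implicit.
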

\begin{proof} We prove this lemma by induction on $k$. When $k=0$, it is easy to see that the lemma holds by the definition of $X$.  Note that $[a_k=1]=\neg [a_k=0]$ and $[b_k=1]=\neg [b_k=0]$.  For $k\geq 1$, it suffices to show the following inductive relations:
\begin{itemize}
    \item $[a_k=1]= J_1^r [b_{k-1}=1]$
    \item $[b_k=1]=J_2^r [a_{k-1}=1]$.
\end{itemize}

Here we prove the first equality.  That is say, we show
\begin{center}
    $[a_k=1]= (B_1^{\geq r} [b_{k-1}=1])\cup (B_1^{\geq r} [b_{k-1}=0])$
\end{center}

Assume that $w\in [a_k=1]$. That is to say, $\alpha_k(w) =1$. According to the definition of $\thicksim_1$, the $b_{k-1}$th coordinates of all elements in the equivalence class $\Pi_1(w)$ are the same as $\beta_{k-1}(w)$, the $b_{k-1}$th coordinate of $w$. It follows that $\Pi_1(w)\subseteq [b_{k-1}=\beta_{k-1}(w)]$ and hence $T_1(w, [b_{k-1}=\beta_{k-1}(w)])=1$.  So $w\in(B_1^{\geq r} [b_{k-1}=1])\cup (B_1^{\geq r} [b_{k-1}=0])$.

Now we show the other inclusion. Assume that  $w\not\in [a_k=1]$.  It follows that one half elements in the equivalence class $\Pi_1(w)$ have 0 as their $b_{k-1}$th coordinates and the other half have 1 as their $b_{k-1}$th coordinates.  By the definition of $T_1$, we know that
\begin{center} $T_1(w, [b_{k-1}=0])= 1/2$  and $T_1(w, [b_{k-1}=1])=1/2$.
\end{center}
Therefore, $w\not \in (B_1^{\geq r} [b_{k-1}=1])$ and $w\not \in (B_1^{\geq r} [b_{k-1}=0])$.  So $w\not \in ((B_1^{\geq r} [b_{k-1}=1])\cup (B_1^{\geq r} [b_{k-1}=0]))$.   The second equality  can be shown similarly. So
we have proved the lemma.
\end{proof}

\begin{thm}  For $r>1/2$, all $J^r$-lists are consistent. In particular, all $J^1$-lists are consistent.
\end{thm}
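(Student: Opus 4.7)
My plan is to translate each event appearing in a $J^r$-list into a coordinate constraint on $\Omega_s$ via the preceding lemma, and then observe that the constraints imposed by the events of a single list involve pairwise distinct coordinates of the bi-sequence, so their intersection is automatically nonempty.

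The first step will be to establish, by induction on the length of the list, the following normal form for any $J^r$-list $(E_0, E_1, \ldots, E_n)$. The base event $E_0$ equals either $[a_0 = 1]$ or $[a_0 = 0]$ (which in $\Omega_s$ coincide with $[b_0 = 1]$ and $[b_0 = 0]$ respectively, since $a_0 = b_0$). For each $i \geq 1$, $E_i$ equals $[a_i = \epsilon_i]$ or $[b_i = \epsilon_i]$ for some $\epsilon_i \in \{0,1\}$, and whether $E_i$ is an $a$-constraint or a $b$-constraint alternates with $i$ once the choice at $i = 1$ is fixed. The inductive step will use the preceding lemma together with the identity $J_j^r E = J_j^r \neg E$, which lets me absorb all \emph{inner} negations: if $E_{i-1}$ is a $b$-constraint at depth $i-1$, then $J_1^r E_{i-1}$ and $\neg J_1^r E_{i-1}$ yield $a$-constraints at depth $i$ (since $J_1^r[b_{i-1}=1] = [a_i=1]$), and symmetrically for $J_2^r$ applied to an $a$-constraint.

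The second step is the key combinatorial observation. Because of the alternation described above, the events $E_0, E_1, \ldots, E_n$ constrain pairwise disjoint coordinates of $w = (a_0 a_1 \cdots, b_0 b_1 \cdots)$: $E_0$ fixes the common value $a_0 = b_0$, while for $i \geq 1$ the event $E_i$ fixes exactly one of $a_i$ or $b_i$. Hence any bi-sequence whose coordinates take the prescribed values on the constrained positions (and arbitrary values elsewhere) belongs to $\bigcap_{i=0}^n E_i$, so the list is consistent. For an infinite $J^r$-list, the same argument yields a nonempty (in fact uncountable) countable intersection, since each additional constraint pins down at most one new coordinate.

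The main obstacle will be the bookkeeping in the normal-form induction, namely tracking how each outer negation determines the value $\epsilon_i$ and how the $a/b$ type is toggled by alternation of $J_1^r$ and $J_2^r$. The identity $J_j^r E = J_j^r \neg E$ is what makes the argument clean, since it reduces the data carried by the list to the outermost negation at each stage plus the pattern of subscripts $1$ and $2$; the hypothesis $r > 1/2$ is used only through the preceding lemma, and the special case $r = 1$ then falls out immediately.
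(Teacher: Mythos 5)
Your proposal is correct and follows essentially the same route as the paper: both reduce each entry of a $J^r$-list, via the preceding lemma and the identity $J_j^r E = J_j^r\neg E$, to a single coordinate constraint $[a_i=\epsilon_i]$ or $[b_i=\epsilon_i]$ on pairwise distinct coordinates, so the intersection is nonempty. The only difference is that the paper works one illustrative list and says the general case "easily generalizes," whereas you spell out the normal-form induction explicitly.
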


\begin{proof} This theorem follows directly from the above lemma.  Consider the $J^r$-list \begin{center}
 $(X, \neg J_2^r X,J_1^rJ_2^r X, \neg J_2^r J_1^rJ_2^r X )$
\end{center}
By the above lemma, $X=[a_0=1], \neg J_2^r X = [b_1=0],J_1^rJ_2^r X=[a_2=1] $, and $\neg J_2^r J_1^rJ_2^r X =[b_3=0]$.  That is to say,
the intersection of all the events in this list is  $[a_0=1]\cap [b_1=0]
\cap   [a_2=1]\cap [b_3=0] $. So this $J^r$-list is consistent. From this example, we can easily generalize that every infinite $J^r$-list is the intersection of $\{[a_i = k_i]: i\geq 0\}$ and $\{[b_j=l_j]:j \geq 0\})$ for some $k_i, l_j\in \{0,1\}$, which includes the list $(a_0a_1a_2\cdots, b_0b_1b_2\cdots)$ as an element, and is hence consistent.
 \end{proof}

\begin{thm} Assume that the probability indices of the language $\mathcal{L}$ are restricted within a finite set of rationals. If further the language contains only one propositional letter $p$, then the canonical space of $\Sigma_H$ for two agents
has the cardinality of the continuum.
\end{thm}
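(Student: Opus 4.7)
My plan is to lift the continuum of semantically distinct infinite $J^r$-lists (provided by the previous theorem) to a continuum of syntactically distinct maximally $\Sigma_H$-consistent sets in the restricted two-agent language. Fix any rational $r$ in the finite index set with $r>\tfrac12$; I will assume the finite index set contains such an $r$ (otherwise one may enlarge it slightly without affecting the finiteness hypothesis). Let $\mathcal{L}'$ denote the restricted language with two agents, the single propositional letter $p$, and probability indices drawn from the given finite set of rationals. Note that $\mathcal{L}'$ is countable, so the canonical state space contains at most $2^{\aleph_0}$ points; it remains to produce at least $2^{\aleph_0}$ distinct maximally consistent sets in $\mathcal{L}'$.

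First, I will translate each operator $J_i^r$ into a formula of $\mathcal{L}'$ by setting $J_i^r\varphi := L^i_r\varphi \vee L^i_r\neg\varphi$, and translate the distinguished event $X$ by the propositional letter $p$ (interpreting $p$ in the bi-sequence model $\langle\Omega_s,\mathcal{A}_s,T_1,T_2\rangle$ by $v(p):=X$). Under this translation, every $J^r$-list becomes a finite (or infinite) sequence of formulas of $\mathcal{L}'$, and the previous theorem says that for every infinite $J^r$-list $(E_1,E_2,\ldots)$, the intersection $\bigcap_{n\ge 1}E_n$ is nonempty in $\Omega_s$, since each such intersection is realized by the unique bi-sequence $(a_0a_1\cdots,b_0b_1\cdots)$ prescribed by the list (via the identifications $[a_k=i]=\pm\underbrace{J_1^rJ_2^r\cdots}_k X$ and dually for $b_k$).

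Next, for each infinite $J^r$-list $\ell$, pick a witness $w_\ell\in\Omega_s$ in this intersection, and set $\Gamma_\ell := \{\varphi\in\mathcal{L}' : \Omega_s,w_\ell\models\varphi\}$. By soundness of $\Sigma_H$ with respect to Harsanyi type spaces (Theorem~\ref{HarsanyiCompleteness}) and the fact that $\langle\Omega_s,\mathcal{A}_s,T_1,T_2\rangle$ is a Harsanyi type space, each $\Gamma_\ell$ is a maximally $\Sigma_H$-consistent subset of $\mathcal{L}'$. Now I will show that the map $\ell\mapsto\Gamma_\ell$ is injective: if $\ell\neq\ell'$, they differ at some finite position $n$, so one of them contains a formula $\psi_n$ (a suitable alternating string of $J_1^r$'s and $J_2^r$'s applied to $p$) and the other contains $\neg\psi_n$. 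Since $\psi_n\in\mathcal{L}'$, we have $\psi_n\in\Gamma_\ell$ while $\psi_n\notin\Gamma_{\ell'}$. Combined with the fact that the set $S_\infty$ of infinite $J^r$-lists has the cardinality of the continuum, this gives at least $2^{\aleph_0}$ distinct maximally consistent sets in $\mathcal{L}'$.

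The main technical point to be careful about is the injectivity step: I must verify that when two infinite lists diverge at position $n$, the disagreement really is preserved by the translation into $\mathcal{L}'$ and by the passage from points of $\Omega_s$ to their complete theories — that is, that $\psi_n$ and its $J^r$-translated versions genuinely lie in $\mathcal{L}'$, which follows from our choice of $r$ inside the finite index set and from the recursive form of the $J^r$-list translation. Combining the lower bound with the countability of $\mathcal{L}'$ (and hence the upper bound $2^{\aleph_0}$ on the number of maximally $\Sigma_H$-consistent subsets of $\mathcal{L}'$) yields that the canonical space of $\Sigma_H$ for two agents has exactly the cardinality of the continuum.
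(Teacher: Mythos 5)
Your proposal is correct and follows essentially the same route as the paper: translate the $J^r$-lists into formulas of the restricted two-agent language via $V(p):=X$ and $L^i_r$ for $B_i^{\geq r}$, use the bi-sequence Harsanyi type space to witness that each infinite list is satisfiable while any two distinct lists disagree on some formula and its negation, and conclude that the continuum of lists yields a continuum of maximally consistent sets (the paper simply fixes $r=1$, which is always in the index set, so your hedge about enlarging the index set is unnecessary). Your explicit passage to the complete theories $\Gamma_\ell$ of witness points and the countability upper bound merely spell out steps the paper leaves implicit.
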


\begin{proof} Assume that the probability indices of the language $\mathcal{L}$ are restricted within a finite set of rationals and the language contains only one propositional letter $p$. If we replace $X$ in each $J^1$-list by a proposition letter $p$ and $B^{\geq r}_i$ by $L_r^i$, we obtain a list of formulas in this restricted language which is satisfied in the Harsanyi type space
 $\langle \Omega_s, \mathcal{A}_s, T_i\rangle$  by simply interpreting them as follows:

    \begin{itemize}
    \item $V(p):= X$;
    \item for any formula $\phi$ in the restricted language, $[[L_r^i\phi]]:= B_i^{\geq r}[[\phi]]$

    \end{itemize}
 Consider the set $S^1_{\infty}$ of infinite such $J^1$-lists of formulas starting with $J^1_1$ operator or its negation:
 \begin{center}
    $S^1_{\infty}=\{(\pi_0p, \pi_1 J_1^1p, \pi_2 J_2^1J_1^1p,\cdots  ): \pi_k$ is either blank or $\neg$ $(k \geq 0)\}$
 \end{center}

  It is easy to see that the set of formulas in each list in $S^1_{\infty}$ is satisfied in the Harsanyi type space
 $\langle \Omega_s, \mathcal{A}_s, (T_i)_{i=1,2}\rangle$ but the set of formulas in any two lists is not. Since $S_{\infty}^1$ has the cardinality of the continuum, so does the canonical space of $\Sigma_H$
 for two agents.
\end{proof}

\begin{thm} \label{CardinalityMultiagent}If the probability indices of the language $\mathcal{L}$ are restricted within a finite set of rationals and there are finitely many propositional letters in the language, then the canonical space $\Omega_H$ of $\Sigma_H$
for $n$ agents($2\leq n < \aleph_0$) has the cardinality of the continuum.
\end{thm}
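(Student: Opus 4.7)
The plan is to reduce this multi-agent, multi-letter case to the two-agent, one-letter theorem just proved, establishing continuum as both an upper and a lower bound on $|\Omega_H|$. The upper bound is immediate: since we restrict to finitely many propositional letters, to finitely many rational probability indices, and to finitely many agent labels, the resulting sub-language $\mathcal{L}$ is a countable set of formulas, so the collection of maximally $\Sigma_H$-consistent sets injects into $2^{\mathcal{L}}$, whose cardinality is $2^{\aleph_0}$.

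For the lower bound I would extend the Harsanyi bi-sequence model $\langle \Omega_s, \mathcal{A}_s, T_1, T_2\rangle$ from the previous subsection to an $n$-agent Harsanyi type space by choosing, for each $i = 3, \ldots, n$, a constant type function $T_i(w) := \mu_0$, where $\mu_0$ is any fixed probability measure on $\langle \Omega_s, \mathcal{A}_s\rangle$. A constant mapping is trivially measurable, and since $[T_i(w)] = \Omega_s$ for every $w$, the Harsanyi condition $T_i(w)([T_i(w)]) = 1$ holds automatically. Extend the single-letter valuation $V(p) := X$ by interpreting each additional propositional letter $q$ by a fixed measurable set (for instance $V(q) := \emptyset$). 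This yields a Harsanyi type space for $n$ agents over finitely many propositional letters.

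Next I would interpret every list in the continuum-sized set $S_\infty^1$ from the preceding theorem over this enlarged model, taking the operators $J_1^1$ and $J_2^1$ to refer to the two distinguished agents $1$ and $2$ as before, and letting the additional agents and propositional letters play no role. The argument in the preceding theorem shows that distinct infinite $J^1$-lists are realized at disjoint events of $\Omega_s$, so the corresponding sets of formulas are pairwise $\Sigma_H$-inconsistent and give rise to pairwise distinct maximally consistent extensions, yielding at least continuum-many elements of the canonical space. The main obstacle is really just a verification: one must check that the trivial extension of the two-agent construction to $n$ agents preserves the Harsanyi property (handled by constancy of $T_i$ for $i\geq 3$) and that no collapse among distinct $J^1$-lists is introduced by the enlarged language (handled by the pinned valuation of the extra letters); both are routine given the groundwork already in place, and combining the two bounds yields the stated continuum cardinality.
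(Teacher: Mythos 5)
Your proposal is correct and follows essentially the same route as the paper: the continuum upper bound from the countability of the restricted language, and the lower bound by transferring the two-agent, one-letter continuum result to the larger language. The paper merely asserts that the cardinality of the canonical space does not decrease when agents or propositional letters are added, whereas you verify this by the explicit (and valid) device of constant type functions for the extra agents and a pinned valuation for the extra letters; this is a welcome elaboration of the same argument rather than a different one.
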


\begin{proof} This proposition follows from the above theorem  with the following two observations:
    \begin{itemize}
        \item $\Omega_H$ has the an upper bound $2^{\aleph_0}$, since the set of formulas in the restricted language is countable and hence its power set has the cardinality of the continuum;
        \item the cardinality of the canonical space does not decrease with the increase of  the number of agents or of the number of the propositional letters.\qedhere
    \end{itemize}
\end{proof}

\noindent The combination of  Theorems  \ref{CardinalityOneagent} and \ref{CardinalityMultiagent} justifies the relative complexity of the multi-agent interactive epistemology compared with one-agent epistemology from the perspective of probabilistic belief.

 \section{Relationship between knowledge and probabilistic beliefs}

  It is well-known in interactive epistemology \cite{Aum99b, AH02} that the concept of knowledge is \emph{implicit in probabilistic beliefs} in the above semantic framework.  The logic for knowledge is the well-known $S_5$, which is the smallest normal modal logic plus the following axiom schema \cite{BRV00}:
  \begin{itemize}
    \item ($T_K$) $K_i p \rightarrow p$
    \item ($4_K$) $K_i p \rightarrow K_iK_i p$
    \item ($5_K$) $\neg K_i p \rightarrow K_i\neg K_i p$.
  \end{itemize}

   \noindent Recall that, given a Harsanyi type space $\langle \Omega, \mathcal{A}, (T_i)_{i=1,2}\rangle$, for each state $w\in \Omega$ and agent $i$, $[T_i(w)]$ denotes the set of all states in which $i's$ probabilities measures are the same as in $w$, i.e., $[T_i(w)]: =\{s'\in S: T_i(s') (A)= T_i(s)(A)$ for all $A\in \mathcal{A}$ $\}$.  $[T_i(w)]$ induces a partition of $\Omega$ and hence a knowledge operator $K_i$, which is an operator on $\mathcal{A}$ satisfying the well-known $S_5$ axioms. It is easy to check that $K_i$ satisfies the following properties: for any $E\in \mathcal{A}$,

  \begin{enumerate}
    \item $K_i E \subseteq B_i^{\geq 1} E$;
    \item $B_i^{\geq r} E \subseteq K_i (B_i^{\geq r} E)$;
    \item $(\Omega \setminus B_i^{\geq r} E) \subseteq K_i (\Omega \setminus B_i^{\geq r} E)$;
  \end{enumerate}

  \noindent Let $\Pi_i$ be another partition and $K'_i$ be its associated knowledge operator satisfying the above three properties.  It is shown in \cite{AH02} that, for each $w\in \Omega$, $[T_i(w)] = \Pi_i(w)$.

  However,  syntactically knowledge is a separate and exogenous notion which is not redundant. Indeed, the probability syntax can only express beliefs of the agents, not that an agent knows anything about another one for sure.  For the completeness of the presentation, here we repeat the example in Section 8 in \cite{AH02}.  Let $H$ and $H'$ be two two-agent Harsanyi type spaces. In $H$, there is only one state $w$, both Ann and Bob assign probability 1 to $w$. In $H'$, there are two states $w$ and $w'$.  At $w$, both Ann and Bob assign probability 1 to $w$; at $w'$, Ann assigns probability 1 to $w$ and Bob assigns probability 1 to $w'$.  In addition, the proposition letter $p$ is true only at $w$ in both spaces. It is easy to see that in $H$, Ann knows for sure in $w$ that Bob assigns probability 1 to $p$ whereas she does not in $H'$.  Moreover, there is no way to capture this syntactically without explicitly introducing knowledge operators for Ann and Bob.

 In the following, we give a new \emph{logical} characterization of this relationship  between  probabilistic belief and knowledge by generalizing the three notions of definability in multi-modal logics \cite{HSS09a, HSS09b} to the setting of probabilistic beliefs and knowledge to show that this relationship is equivalent to the statement that knowledge is \emph{implicitly defined by probabilistic belief}, but is not \emph{reducible to it} and hence is not \emph{explicitly definable} in terms of probabilistic belief.  First we briefly review the background theory about definability in multi-modal logic \cite{HSS09a} and then apply it to the relationship between knowledge and
 probabilistic belief.

  A language $\mathcal{L}((O_i)_{i\in \textrm{N}})$ is defined as follows. We start from a countable set of propositional letters $AP:=\{p_0, p_1, \cdots\}$.  The set of  \emph{formulas} in the language $\mathcal{L}((O_i)_{i\in \mathbb{N}})$ is built from propositional letters by connectives $\neg$ and $\wedge$, and the set of operators $(O_i)_{i\in \mathbb{N}}$.  Equivalently, a formula is defined according to the following syntax:
 \begin{center}
    $\phi:= p \mid \neg \phi \mid \phi\wedge \phi \mid O_i\phi (i\in \mathbb{N})$
 \end{center}

 A \emph{logic}  $\Lambda$ in the language $\mathcal{L}((O_i)_{i\in \textrm{N}})$ is a set of formulas that contains all propositional tautologies, is closed under modus ponens and uniform substitution (that is, if $\phi\in \Lambda$, do do all its substitution instances.).   Let $\Lambda$ and $\Lambda'$ be two logics. $\Lambda+\Lambda'$ denotes
 the smallest logic containing $\Lambda \cup \Lambda'$.  Similarly, for a formula $\phi$, $\Lambda +\phi$
 denotes the smallest logic that contains $\Lambda\cup\{\phi\}$.    Let $O'$ be an operator new to
 $\Lambda$. $\Lambda[O_i/O']$  is the logic obtained by replacing $O'$ for all occurrences
  $O_i$ in all formulas in $\Lambda$.

 $\Lambda$ is \emph{normal} if, in addition, for each operator $O_i$,
\begin{itemize}
    \item $K_{O_i}: O_i(\phi\rightarrow \psi)\rightarrow (O_i\phi \rightarrow O_i\psi)\in \Lambda$;
    \item it is closed under generalization: $(\phi\in \Lambda\Rightarrow O_i\phi\in \Lambda)$.
\end{itemize}
Note that we don't require logics to be normal in this paper, since, obviously, probability operators are not necessarily normal.

\subsection{Algebras and Algebraic models}

    An \emph{algebra} $\mathcal{A}$ for the language $\mathcal{L}((O_i)_{i\in \mathbb{N}})$ is a tuple
    \begin{center}
        $\mathcal{A}:=\langle \mathcal{B}, \neg, \wedge, 1, (O_i)_{i\in \mathbb{N}}\rangle $
    \end{center}
    where $\langle \mathcal{B}, \neg, \wedge, 1\rangle$ is a Boolean algebra and $O_i(i\in \mathbb{N})$ is a unary operator on $\mathcal{B}$.  An \emph{algebraic model} $\mathcal{M}$ based on this algebra $\mathcal{A}$ is a pair $\langle \mathcal{A}, v\rangle$ where $v$ is a \emph{valuation function} from the set $AP$ of propositional letters to $\mathcal{B}$.
As usual, $v$ can be extended to a meaning function $[[\cdot]]_{\mathcal{M}}$ on all formulas inductively as follows:

\begin{itemize}
    \item $[[p]]_{\mathcal{M}} = v(p)$ for all proposition letters;
    \item $[[\phi\wedge \psi]]_{\mathcal{M}}= [[\phi]]_{\mathcal{M}}\wedge_{\mathcal{M}} [[\psi]]_{\mathcal{M}}$;
    \item $[[\neg \phi]]_{\mathcal{M}}=\neg_{\mathcal{M}} [[\phi]]_{\mathcal{M}}$;
    \item $[[O_i\phi]]_{\mathcal{M}}= O_i([[\phi]]_{\mathcal{M}})$.

\end{itemize}

We omit the subscript $\mathcal{M}$ if no confusion arises. A formula $\phi$ is \emph{valid in $\mathcal{M}$} if $[[\phi]]=1$ and is \emph{valid in the underlying algebra $\mathcal{A}$} if $\phi$ is valid in all algebraic models based on $\mathcal{A}$.  It is \emph{valid in a class
of algebras} if it is valid in all algebras in this class. Let $Th(\mathcal{M})$ be the set of all formulas which are valid in $\mathcal{M}$ and $Th(\mathcal{A})$ be the set of formulas which are valid in $\mathcal{A}$. A logic $\Lambda$ is \emph{sound} with respect to $\mathcal{M}$ ($\mathcal{A}$) if $\Lambda \subseteq Th(\mathcal{M})(Th(\mathcal{A}))$; also we say, $\mathcal{A} (\mathcal{M})$ is a \emph{$\Lambda$-algebra }(\emph{an algebraic model for $\Lambda$}). And it is \emph{complete} with respect to  $\mathcal{M}$ ($\mathcal{A}$) if we reverse the above inclusion, i.e., $\Lambda \supseteq Th(\mathcal{M})(Th(\mathcal{A}))$.

For a logic $\Lambda$ in the language $\mathcal{L}((O_i)_{i\in \textrm{N}})$, we define an equivalence relation $\equiv_{\Lambda}$  on formulas by $\phi\equiv_{\Lambda}\psi $ if
$\phi \leftrightarrow \psi\in \Lambda$. $|\phi|_{\Lambda}$ denotes the equivalence class that contains
$\phi$. Let $\mathcal{L}((O_i)_{i\in \textrm{N}})/{\equiv_{\Lambda}}$ be the collection of all equivalent classes. The corresponding algebraic operations are defined as follows:
\begin{itemize}
        \item $\neg |\phi|_{\Lambda} = |\neg \phi|_{\Lambda} $
        \item $|\phi|_{\Lambda}\wedge |\psi|_{\Lambda}= |\phi\wedge \psi|_{\Lambda}$
        \item $O_i(|\phi|_{\Lambda})= |O_i\phi|_{\Lambda}$
\end{itemize}
 
It is easy to check that these operations are well-defined and such defined
 $\mathcal{A}_{\Lambda}:=\langle \mathcal{L}((O_i)_{i\in \textrm{N}})/{\equiv_{\Lambda}},$ $
 \neg, \wedge, (O_i)_{i\in \textrm{N}}\rangle$ is an algebra. We call it \emph{the Lindenbaum algebra for the logic } $\Lambda$.  The \emph{canonical algebraic model $\mathcal{M}_{\Lambda}$} is a tuple:
 \begin{center}
         $\langle \mathcal{L}((O_i)_{i\in \textrm{N}})/{\equiv_{\Lambda}},\neg, \wedge, (O_i)_{i\in \textrm{N}}, v_{\Lambda} \rangle $
 \end{center}
 where $\langle \mathcal{L}((O_i)_{i\in \textrm{N}})/{\equiv_{\Lambda}},
 \neg, \wedge,(O_i)_{i\in \textrm{N}}\rangle $  is the Lindenbaum algebra for $\Lambda$ and $v_{\Lambda}$ is the canonical valuation: $v_{\Lambda}(p)=|p|_{\Lambda}$.

 \begin{prop}   A logic $\Lambda$ in the language $\mathcal{L}((O_i)_{i\in \textrm{N}})$ is sound and complete with respect to $\{\mathcal{M}_{\Lambda}\}$ and
 hence with respect to $\{\mathcal{A}_{\Lambda}\}$. Therefore, it is sound and complete with
 respect to the class of $\Lambda$-algebras.

 \end{prop}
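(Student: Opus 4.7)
The plan is to establish the result in three stages: first prove a truth lemma for the canonical algebraic model $\mathcal{M}_\Lambda$, then lift soundness and completeness from $\mathcal{M}_\Lambda$ to the underlying Lindenbaum algebra $\mathcal{A}_\Lambda$, and finally observe that $\mathcal{A}_\Lambda$ itself is a $\Lambda$-algebra, so that completeness with respect to $\{\mathcal{A}_\Lambda\}$ already gives completeness with respect to the class of all $\Lambda$-algebras.

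First I would prove the truth lemma: for every formula $\phi$ in $\mathcal{L}((O_i)_{i\in \mathbb{N}})$, $[[\phi]]_{\mathcal{M}_\Lambda}=|\phi|_\Lambda$. This is a straightforward induction on the structure of $\phi$, using the base case $[[p]]_{\mathcal{M}_\Lambda}=v_\Lambda(p)=|p|_\Lambda$ and the fact that the algebraic operations on $\mathcal{L}((O_i)_{i\in \mathbb{N}})/{\equiv_\Lambda}$ are defined precisely so that the inductive clauses match term-by-term: $\neg|\phi|_\Lambda=|\neg\phi|_\Lambda$, $|\phi|_\Lambda\wedge|\psi|_\Lambda=|\phi\wedge\psi|_\Lambda$, and $O_i(|\phi|_\Lambda)=|O_i\phi|_\Lambda$. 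Noting that the top element of the Lindenbaum algebra is $|\top|_\Lambda$ (with $\top$ any propositional tautology), this yields $[[\phi]]_{\mathcal{M}_\Lambda}=1$ iff $|\phi|_\Lambda=|\top|_\Lambda$ iff $\phi\leftrightarrow\top\in\Lambda$ iff $\phi\in\Lambda$, the last equivalence using that $\Lambda$ contains all propositional tautologies and is closed under modus ponens. Hence $\Lambda=Th(\mathcal{M}_\Lambda)$, which is both soundness and completeness with respect to $\{\mathcal{M}_\Lambda\}$.

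Next I would lift this to the algebra $\mathcal{A}_\Lambda$. Soundness $\Lambda\subseteq Th(\mathcal{A}_\Lambda)$ requires showing that $\phi\in\Lambda$ is valid under every valuation $v$ on $\mathcal{A}_\Lambda$, not just $v_\Lambda$. Given any such $v$, choose, for each propositional letter $p_i$ occurring in $\phi$, a representative formula $\psi_i$ with $v(p_i)=|\psi_i|_\Lambda$, and let $\sigma$ be the substitution $p_i\mapsto\psi_i$. A routine induction, parallel to the truth lemma, shows $[[\phi]]_{\langle\mathcal{A}_\Lambda,v\rangle}=|\phi^\sigma|_\Lambda$. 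Since $\Lambda$ is closed under uniform substitution, $\phi\in\Lambda$ implies $\phi^\sigma\in\Lambda$, so $|\phi^\sigma|_\Lambda=|\top|_\Lambda=1$, giving validity. Completeness $Th(\mathcal{A}_\Lambda)\subseteq\Lambda$ is immediate from $Th(\mathcal{A}_\Lambda)\subseteq Th(\mathcal{M}_\Lambda)=\Lambda$, since $\mathcal{M}_\Lambda$ is built on $\mathcal{A}_\Lambda$.

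Finally, the soundness half just established says precisely that $\mathcal{A}_\Lambda$ is a $\Lambda$-algebra, so the class of $\Lambda$-algebras is nonempty and contains $\mathcal{A}_\Lambda$. Soundness with respect to the whole class is then immediate from the definition of $\Lambda$-algebra, and completeness follows from $Th(\textrm{class of }\Lambda\text{-algebras})\subseteq Th(\mathcal{A}_\Lambda)\subseteq\Lambda$. I do not anticipate a genuine obstacle here; the only delicate point is the substitution argument used to pass from $\mathcal{M}_\Lambda$ to $\mathcal{A}_\Lambda$, where closure of $\Lambda$ under uniform substitution is essential.
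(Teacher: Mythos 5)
Your proof is correct and is exactly the standard Lindenbaum--Tarski argument (truth lemma for the canonical valuation, then the uniform-substitution step to pass from $\mathcal{M}_\Lambda$ to arbitrary valuations on $\mathcal{A}_\Lambda$, then the trivial lift to the class of $\Lambda$-algebras) that the paper itself does not spell out but simply delegates to Chapter 5 of \cite{BRV00}. You correctly identify the only delicate point, namely that closure of $\Lambda$ under uniform substitution is what makes $\mathcal{A}_\Lambda$ itself a $\Lambda$-algebra rather than merely making the single model $\mathcal{M}_\Lambda$ sound.
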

 \begin{proof} The interested reader may refer to Chapter 5 in \cite{BRV00} for details.
 \end{proof}

 \subsection{Three Notions of Definability}
  Let $\delta$ be a formula in the language $\mathcal{L}((O_i)_{i\in \mathbb{N}})$. $O_{\infty}$ is new to $\mathcal{L}((O_i)_{i\in \mathbb{N}})$.  We call the
  following formula  a \emph{definition of $O_{\infty}$} in terms of operators $(O_i)_{i\in \mathbb{N}}$
  \begin{center} $DO_{\infty}: O_{\infty}p\leftrightarrow \delta$,
  \end{center}
  \noindent which is a formula in $\mathcal{L}((O_i)_{i\in \mathbb{N}},
  O_{\infty})$. Let $\Lambda$ be a logic in $\mathcal{L}((O_i)_{i\in \mathbb{N}}, O_{\infty})$.  For
  a smaller language $\mathcal{L}'\subseteq \mathcal{L}$, if $\Lambda'= \Lambda \cap \mathcal{L}'$,
  we say $\Lambda$ is a \emph{conservative extension} of $\Lambda'$.    $\Lambda_0$
  denotes $\Lambda\cap \mathcal{L}((O_i)_{i\in \mathbb{N}})$, which is a logic.
  If $\mathcal{A}=\langle \mathcal{B}, \neg, \wedge, 1, (O_i)_{i\in \mathbb{N}}, O_{\infty}\rangle$
  is a $\Lambda$-algebra, then it is easy to show that $\langle \mathcal{B}, \neg, \wedge, 1, (O_i)_{i \in \mathbb{N}}\rangle$
  is a $\Lambda_0$-algebra, which is denoted $\mathcal{A}_0$. So $\mathcal{A}$ is also written as $\langle \mathcal{A}_0, O_{\infty}\rangle$.

  \begin{defi}  $O_{\infty}$  is \emph{explicitly defined} in $\Lambda$ if there is a definition $DO_{\infty}$   such that $DO_{\infty}\in \Lambda$.    $O_{\infty}$ is
      \emph{implicitly defined} in $\Lambda$ if $O_{\infty} p\leftrightarrow O_{\infty}'p
      \in \Lambda + \Lambda[O_{\infty}/O_{\infty}']$.  $O_{\infty}$ is \emph{reducible} to
      $(O_i)_{i\in \mathbb{N}}$ if there is a definition of $O_{\infty}$ in terms of $(O_i)_{i\in \textrm{N}}$ such that $\Lambda_0+ DO_{\infty}$ is a conservation extension of $\Lambda_0$ and $\Lambda \subseteq \Lambda_0 + DO_{\infty}$.

  \end{defi}

  An algebra $\mathcal{O}$ of unary operators over a Boolean algebra $\langle \mathcal{B}, \neg, \wedge, 1\rangle$ is a Boolean algebra which is closed under composition where operations on operators are defined as follows:
   \begin{itemize}
          \item for any $f\in \mathcal{O}$, $(\neg f)(x):= \neg f(x)$;
          \item for any $f, g\in \mathcal{O}$, $(f\wedge g)(x):= f(x)\wedge g(x)$;
          \item for any $f, g\in \mathcal{O}$, $(f\circ g)(x):= f(g(x))$;
          \item $1$ is a constant operator which maps each element in $\mathcal{B}$ to 1.
   \end{itemize}
   For a $\Lambda$-algebra $\mathcal{A}=\langle \mathcal{B}, \neg, \wedge, 1, (O_i)_{i\in \mathbb{N}}, O_{\infty}\rangle$, the algebra $\mathcal{O}^*_{\mathcal{A}}$ of operators over $\mathcal{A}$ is the smallest algebra that includes $(O_i)_{i\in \mathbb{N}}$ and $ O_{\infty}$.
     Such a defined $\langle \mathcal{O}^*_{\mathcal{A}}, \neg, \wedge, \circ, 1_{\mathcal{O}^*_{\mathcal{A}}}\rangle$ is called an \emph{algebra of operators} on $\mathcal{A}$.
      $\mathcal{O}^*_{\mathcal{A}_0}$ can be defined similarly for the associated  \emph{reduced} algebra $\mathcal{A}_0$.

       The following three propositions are the algebraic characterizations of these three notions of definability.

       \begin{prop}\label{Implicit}  $O_{\infty}$ is implicitly defined in $\Lambda$ if and
       only if, for any two $\Lambda$-algebras $\langle \mathcal{A}_0, O_{\infty}\rangle$
       and $\langle \mathcal{A}_0, O'_{\infty}\rangle, O_{\infty} =O'_{\infty}$.

       \end{prop}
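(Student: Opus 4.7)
The plan is to prove this as an algebraic analogue of Beth's definability theorem, exploiting the soundness/completeness of $\Lambda$ with respect to the class of $\Lambda$-algebras (the preceding proposition). The key technical device is to combine two $\Lambda$-algebras sharing a reduct $\mathcal{A}_0$ into a single algebra $\langle \mathcal{A}_0, O_\infty, O'_\infty\rangle$ interpreting both the old and the renamed operator simultaneously, and to relate this algebra to the joint logic $\Lambda + \Lambda[O_\infty/O'_\infty]$.

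For the forward direction, I would assume $O_\infty p \leftrightarrow O'_\infty p \in \Lambda + \Lambda[O_\infty/O'_\infty]$ and pick two $\Lambda$-algebras $\langle \mathcal{A}_0, O_\infty\rangle$ and $\langle \mathcal{A}_0, O'_\infty\rangle$ with a common reduct. The central observation is that the merged structure $\mathcal{A}^* := \langle \mathcal{A}_0, O_\infty, O'_\infty\rangle$ validates \emph{both} $\Lambda$ (since $O'_\infty$ does not occur in $\Lambda$-formulas, and the $O_\infty$-part is a $\Lambda$-algebra) \emph{and} $\Lambda[O_\infty/O'_\infty]$ (by the symmetric argument). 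Hence $\mathcal{A}^*$ validates the joint logic, and in particular $O_\infty p \leftrightarrow O'_\infty p$. Under every valuation $v$ with $v(p)=b$ for arbitrary $b$ in the Boolean universe, this forces $O_\infty(b) = O'_\infty(b)$, giving $O_\infty = O'_\infty$.

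For the backward direction, I would argue contrapositively via the canonical/Lindenbaum algebra. Write $\Lambda^* := \Lambda + \Lambda[O_\infty/O'_\infty]$ and suppose $O_\infty p \leftrightarrow O'_\infty p \notin \Lambda^*$. By soundness and completeness of $\Lambda^*$ with respect to its class of algebras, there is some $\Lambda^*$-algebra $\mathcal{A}^* = \langle \mathcal{A}_0, O_\infty, O'_\infty\rangle$ and a valuation witnessing $O_\infty(b) \ne O'_\infty(b)$ for some $b$. Peel off the two reducts $\langle \mathcal{A}_0, O_\infty\rangle$ and $\langle \mathcal{A}_0, O'_\infty\rangle$; each is a $\Lambda$-algebra (the first because $\mathcal{A}^*$ validates $\Lambda$, the second because $\mathcal{A}^*$ validates $\Lambda[O_\infty/O'_\infty]$, which is $\Lambda$ after renaming $O'_\infty$ back to $O_\infty$). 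These two $\Lambda$-algebras share the common reduct $\mathcal{A}_0$ yet disagree on $b$, contradicting the hypothesis.

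The main delicate point, and the place where I would be most careful, is the book-keeping around the renaming $[O_\infty / O'_\infty]$: one must verify that if $\langle \mathcal{A}_0, O'_\infty\rangle$ is a $\Lambda$-algebra, then the algebra $\langle \mathcal{A}_0, O_\infty, O'_\infty\rangle$ genuinely validates $\Lambda[O_\infty/O'_\infty]$, and conversely. This is a routine translation argument but is the only step that requires more than symbol-pushing. Once that is established, both implications follow cleanly from the completeness of $\Lambda$ (and of $\Lambda^*$) with respect to their algebraic semantics, as stated in the proposition immediately preceding Definition of the three notions.
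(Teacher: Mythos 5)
Your proposal is correct and follows essentially the same route the paper indicates: the paper defers to \cite{HSS09a, HSS09b} but explicitly describes the strategy as proving the left-to-right direction via meaning functions on algebraic models (your merged algebra $\langle \mathcal{A}_0, O_{\infty}, O'_{\infty}\rangle$ validating the joint logic) and the right-to-left direction via canonical models (your Lindenbaum-algebra contrapositive). The renaming book-keeping you flag is indeed the only non-trivial verification, and your handling of it is sound.
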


       \begin{prop}\label{Explicit}
        $O_{\infty}$ is explicitly defined in $\Lambda$ if and only if
       for each $\Lambda$-algebra $\mathcal{A}$, $\mathcal{O}^*_{\mathcal{A}}=
               \mathcal{O}^*_{\mathcal{A}_0}$.
       \end{prop}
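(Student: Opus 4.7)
The plan is to prove the biconditional by matching syntactic formulas in a single free propositional letter $p$ with elements of the operator algebra $\mathcal{O}^*_{\mathcal{A}_0}$. The key intermediate step will be a \emph{translation lemma}: for every $\Lambda$-algebra $\mathcal{A}$ and every formula $\delta\in\mathcal{L}((O_i)_{i\in\mathbb{N}})$ whose only propositional letter is $p$, the unary operation $f_\delta\colon\mathcal{B}\to\mathcal{B}$ sending $x$ to the interpretation of $\delta$ under the valuation $p\mapsto x$ belongs to $\mathcal{O}^*_{\mathcal{A}_0}$; conversely, every element of $\mathcal{O}^*_{\mathcal{A}_0}$ arises as $f_\delta$ for some such $\delta$. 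Both halves follow by a simultaneous induction, using the base cases $f_p=\mathrm{id}$ and $f_\top=1$, the generator case $f_{O_i\delta}=O_i\circ f_\delta$, and the Boolean clauses $f_{\neg\delta}=\neg f_\delta$ and $f_{\delta\wedge\psi}=f_\delta\wedge f_\psi$.

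For the forward direction, suppose $DO_\infty:O_\infty p\leftrightarrow\delta$ lies in $\Lambda$ for some $\delta\in\mathcal{L}((O_i)_{i\in\mathbb{N}})$. Fix a $\Lambda$-algebra $\mathcal{A}=\langle\mathcal{A}_0,O_\infty\rangle$. Because $DO_\infty$ is valid in $\mathcal{A}$, evaluating it under the valuation $p\mapsto x$ yields $O_\infty(x)=f_\delta(x)$ for every $x\in\mathcal{B}$. Thus $O_\infty=f_\delta\in\mathcal{O}^*_{\mathcal{A}_0}$ by the translation lemma, so the smallest operator algebra containing $(O_i)_{i\in\mathbb{N}}$ together with $O_\infty$ is already contained in $\mathcal{O}^*_{\mathcal{A}_0}$. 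Combined with the trivial inclusion $\mathcal{O}^*_{\mathcal{A}_0}\subseteq\mathcal{O}^*_{\mathcal{A}}$, this yields equality.

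For the converse, I work in the canonical $\Lambda$-algebra $\mathcal{A}_\Lambda$ and its reduct $(\mathcal{A}_\Lambda)_0$. By hypothesis $\mathcal{O}^*_{\mathcal{A}_\Lambda}=\mathcal{O}^*_{(\mathcal{A}_\Lambda)_0}$, so in particular $O_\infty\in\mathcal{O}^*_{(\mathcal{A}_\Lambda)_0}$. The translation lemma supplies a formula $\delta\in\mathcal{L}((O_i)_{i\in\mathbb{N}})$ with $O_\infty=f_\delta$ as operators on the Lindenbaum algebra. Applying both sides to $|p|_\Lambda$ gives $|O_\infty p|_\Lambda=f_\delta(|p|_\Lambda)=|\delta|_\Lambda$, so $O_\infty p\leftrightarrow\delta\in\Lambda$, which is precisely an explicit definition of $O_\infty$.

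The step I expect to require the most care is the converse direction of the translation lemma: elements of $\mathcal{O}^*_{\mathcal{A}_0}$ are built by finitely many applications of $\neg$, $\wedge$, composition, and the constant $1$ starting from the generators $(O_i)_{i\in\mathbb{N}}$, and the induction must thread a formula $\delta$ through each construction step. The only nonroutine clause is composition, which requires substitution: if $g=f_\delta$ and $h=f_\psi$ then $g\circ h=f_{\delta[\psi/p]}$. Once this bookkeeping is in place (and the closure of $\Lambda$ under uniform substitution is invoked to lift the equality $O_\infty p\leftrightarrow\delta$ in the canonical algebra to equality of operators on arbitrary elements), the rest of the argument is purely formal.
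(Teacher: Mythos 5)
Your proof is correct and follows exactly the route the paper indicates (the paper only sketches the argument, deferring to the cited definability papers): the forward direction via the meaning function on an arbitrary $\Lambda$-algebra, the converse via the canonical Lindenbaum algebra, with the translation lemma identifying one-variable formulas $\delta$ with elements $f_\delta$ of the operator algebra and composition handled by substitution. The only point worth double-checking is that $f_p=\mathrm{id}$ belongs to $\mathcal{O}^*_{\mathcal{A}_0}$ under the paper's phrase ``smallest algebra that includes $(O_i)_{i\in\mathbb{N}}$,'' but that is a definitional convention inherited from the cited source rather than a gap in your argument.
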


       \begin{prop} \label{Reducible} $O_{\infty}$ is reducible to $(O_i)_{i\in \textrm{N}}$
       in $\Lambda$ if and only if  each $\Lambda_0$-algebra $\mathcal{A}_0$ can be
       extended to a $\Lambda$-algebra $\mathcal{A}$ such that $\mathcal{O}^*_{\mathcal{A}}=\mathcal{O}^*_{\mathcal{A}_0}$.
       \end{prop}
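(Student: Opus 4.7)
The plan is to prove the equivalence in two directions, using the completeness of $\Lambda$ and $\Lambda_0$ with respect to their classes of algebras (as established in the proposition immediately preceding the definability subsection) together with the universal property of the Lindenbaum algebra of $\Lambda_0$.

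For the forward direction $(\Rightarrow)$, I would take the given definition $DO_\infty \colon O_\infty p \leftrightarrow \delta$ (with $\delta \in \mathcal{L}((O_i)_{i\in \mathbb{N}})$) and, for any $\Lambda_0$-algebra $\mathcal{A}_0$, define $\widehat{O_\infty}\colon \mathcal{B}\to \mathcal{B}$ by $\widehat{O_\infty}(a):=\widehat{\delta}(a)$, where $\widehat{\delta}$ interprets $\delta$ in $\mathcal{A}_0$ with $p$ valued at $a$. The resulting $\mathcal{A}:=\langle \mathcal{A}_0, \widehat{O_\infty}\rangle$ validates $DO_\infty$ by construction and $\Lambda_0$ by hypothesis on $\mathcal{A}_0$, hence validates $\Lambda_0+DO_\infty \supseteq \Lambda$, so it is a $\Lambda$-algebra. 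The operator $\widehat{O_\infty}$ is, by its very definition, assembled from $(O_i)_{i\in \mathbb{N}}$ by the Boolean operations and composition that mirror the syntactic structure of $\delta$, and therefore belongs to $\mathcal{O}^*_{\mathcal{A}_0}$. This gives $\mathcal{O}^*_{\mathcal{A}} = \mathcal{O}^*_{\mathcal{A}_0}$ as required.

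For the backward direction $(\Leftarrow)$, I would first specialize the hypothesis to the Lindenbaum algebra $\mathcal{A}_{\Lambda_0}$, obtaining a $\Lambda$-extension $\mathcal{A}^* = \langle \mathcal{A}_{\Lambda_0}, O^*_\infty\rangle$ with $\mathcal{O}^*_{\mathcal{A}^*}=\mathcal{O}^*_{\mathcal{A}_{\Lambda_0}}$. Since $O^*_\infty$ lies in this operator algebra, I can extract, by induction on its composition/Boolean build-up from the $(O_i)$'s, a formula $\delta \in \mathcal{L}((O_i)_{i\in \mathbb{N}})$ with single free letter $p$ such that $O^*_\infty(a) = \widehat{\delta}(a)$ for every $a \in \mathcal{A}_{\Lambda_0}$. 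I would then declare $DO_\infty := (O_\infty p \leftrightarrow \delta)$ to be the candidate definition. Conservativity of $\Lambda_0+DO_\infty$ over $\Lambda_0$ follows from the forward-direction construction itself: every $\Lambda_0$-algebra $\mathcal{A}_0$ extends to a $\Lambda_0+DO_\infty$-algebra via $O_\infty := \widehat{\delta}$, so an $O_\infty$-free formula valid in all $\Lambda_0+DO_\infty$-algebras is valid in all $\Lambda_0$-algebras, hence in $\Lambda_0$ by completeness.

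The inclusion $\Lambda \subseteq \Lambda_0 + DO_\infty$ is the main obstacle and will be handled by a transfer argument through $\mathcal{A}^*$. Given $\phi \in \Lambda$ and an arbitrary $\Lambda_0+DO_\infty$-algebra $\mathcal{A}' = \langle \mathcal{A}_0, \widehat{\delta}\rangle$ with valuation $v$, I would build the unique $\Lambda_0$-homomorphism $h\colon \mathcal{A}_{\Lambda_0} \to \mathcal{A}_0$ determined by $h(|q|_{\Lambda_0}) := v(q)$ and lift it to a map $h^+\colon \mathcal{A}^* \to \mathcal{A}'$ on carriers. The essential verification is that $h^+$ commutes with $O_\infty$: one must check that $h(\widehat{\delta}^{\mathcal{A}_{\Lambda_0}}(x)) = \widehat{\delta}^{\mathcal{A}_0}(h(x))$ for every $x$, which is a routine induction on $\delta$ precisely because $\delta$ contains only base operators $(O_i)$ and $h$ is a $\Lambda_0$-homomorphism. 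Once $h^+$ is a $\Lambda$-homomorphism, validity of $\phi$ in $\mathcal{A}^*$ (under the canonical valuation) transfers under $h^+$ to $[[\phi]]_{\mathcal{A}', v} = h^+([[\phi]]_{\mathcal{A}^*}) = h^+(1) = 1$. Since $v$ was arbitrary, $\phi$ is valid in $\mathcal{A}'$, and completeness of $\Lambda_0+DO_\infty$ gives $\phi \in \Lambda_0+DO_\infty$. The conceptual crux is that the freeness of the Lindenbaum algebra guarantees a single definition $\delta$ extracted there is adequate for every $\Lambda_0$-algebra, sidestepping the possibility that different algebras would demand different definitions.
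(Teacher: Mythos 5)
Your proof is correct and follows exactly the route the paper indicates for this proposition (which it does not prove in detail, deferring to Halpern--Samet--Segev): the left-to-right direction by interpreting the defining formula $\delta$ as an operator $\widehat{\delta}$ on each $\Lambda_0$-algebra, and the right-to-left direction by extracting $\delta$ from the canonical (Lindenbaum) algebra and transferring validity along the induced homomorphism. The only points worth tightening are the standing assumption that $\delta$ contains only the letter $p$ and the definitional edge case of whether the identity operator lies in $\mathcal{O}^*_{\mathcal{A}_0}$ when $p$ occurs in $\delta$ outside the scope of every $O_i$, both of which are inherited from the cited papers.
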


       \begin{prop}\label{EquivalenceDef}  $O_{\infty}$ is explicitly defined in $\Lambda$ if and only if
        $O_{\infty}$ is implicitly defined  and is reducible to $(O_i)_{i\in \mathbb{N}}$
        in $\Lambda$.
       \end{prop}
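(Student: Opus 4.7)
The plan is to derive both implications from the algebraic characterizations supplied by Propositions \ref{Implicit}, \ref{Explicit}, and \ref{Reducible}, rather than arguing semantically. The forward direction will follow quickly from any witnessing definition $DO_{\infty}: O_{\infty}p \leftrightarrow \delta$, while the converse direction is a Beth-style argument that uses reducibility to manufacture a candidate definition and implicit definability to force it into $\Lambda$.

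For the forward direction, suppose $DO_{\infty}\in \Lambda$ where $\delta$ does not contain $O_{\infty}$. Implicit definability is immediate: in any two $\Lambda$-algebras $\langle \mathcal{A}_0, O_{\infty}\rangle$ and $\langle \mathcal{A}_0, O'_{\infty}\rangle$ sharing the same reduct, both operators must agree pointwise with the element $[[\delta]](a)$ computed in $\mathcal{A}_0$, so Proposition \ref{Implicit} applies. For reducibility, I would first observe $\Lambda \subseteq \Lambda_0 + DO_{\infty}$ by iteratively rewriting occurrences of $O_{\infty}$ using $DO_{\infty}$ to reduce any formula in $\Lambda$ to an $O_{\infty}$-free formula in $\Lambda_0$. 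Conservativity of $\Lambda_0+DO_{\infty}$ over $\Lambda_0$ is then witnessed by the canonical expansion of every $\Lambda_0$-algebra $\mathcal{A}_0$ to a $\Lambda$-algebra by setting $O_{\infty} := [[\delta]]$.

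For the converse, assume $O_{\infty}$ is both implicitly defined in $\Lambda$ and reducible to $(O_i)_{i\in \mathbb{N}}$, with reducibility witnessed by some $DO_{\infty}: O_{\infty}p \leftrightarrow \delta$. I would argue by contradiction that $DO_{\infty}\in \Lambda$. If not, there exists a $\Lambda$-algebra $\langle \mathcal{A}_0, O_{\infty}\rangle$ and an element $a$ with $O_{\infty}(a) \neq [[\delta]](a)$. Define a new unary operator $O'_{\infty}$ on $\mathcal{A}_0$ by $O'_{\infty}(x) := [[\delta]](x)$; since $\delta$ mentions only $(O_i)_{i\in\mathbb{N}}$, this is well-defined from the $\Lambda_0$-reduct alone. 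The expansion $\langle \mathcal{A}_0, O'_{\infty}\rangle$ validates $DO_{\infty}$ by construction and therefore validates all of $\Lambda_0 + DO_{\infty}$, hence validates $\Lambda$ as well (because $\Lambda \subseteq \Lambda_0 + DO_{\infty}$). Thus $\langle \mathcal{A}_0, O_{\infty}\rangle$ and $\langle \mathcal{A}_0, O'_{\infty}\rangle$ are two $\Lambda$-algebras with the same $\Lambda_0$-reduct but distinct top operators, contradicting Proposition \ref{Implicit}.

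The main obstacle lies in the converse direction, where one must ensure that the specific witness $\delta$ produced by reducibility can be used both to construct the competing operator $O'_{\infty}$ and to guarantee that the new expansion remains a $\Lambda$-algebra; the inclusion $\Lambda \subseteq \Lambda_0 + DO_{\infty}$ from the definition of reducibility is exactly what bridges these two needs. Everything else is bookkeeping, and Proposition \ref{Implicit} then closes the argument by ruling out the bifurcation of $O_{\infty}$ on a common reduct.
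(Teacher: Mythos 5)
Your proof is correct and follows essentially the route the paper intends: the paper gives no proof of this proposition beyond deferring to \cite{HSS09a, HSS09b} with the remark that the left-to-right directions are shown via meaning functions on algebraic models and the right-to-left ones via canonical models, which is exactly your combination of the algebraic characterizations in Propositions \ref{Implicit}--\ref{Reducible} with the Beth-style construction of a competing operator $O'_{\infty}:=[[\delta]]$ on a common reduct. The only tacit assumptions you rely on are shared with the paper --- that $\delta$ contains only the variable $p$ (so that $[[\delta]]$ is a unary operation on the reduct) and that logics admit replacement of provable equivalents under the operators (needed for the rewriting step establishing $\Lambda \subseteq \Lambda_0 + DO_{\infty}$) --- both of which are implicit in the paper's Lindenbaum-algebra setup.
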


       The proofs of the above 4 propositions are similar to the corresponding lemmas in
       \cite{HSS09a, HSS09b}: the left-to-right directions are shown by the meaning functions on algebraic models and the right-to-left ones are proved through canonical models. The interested reader may refer to these papers for detailed proofs. It is worth noting that the proofs in \cite{HSS09a, HSS09b} for modal operators also apply to our probability operators $L_r$ in this paper.

\subsection{Definability of knowledge in terms of probabilistic belief}
 We have finished the review of the background theory about definability in multi-modal logic.  Now we apply it to the relationship between knowledge and probabilistic belief. In this section, the language $\mathcal{L}$ in previous sections are written as  $\mathcal{L}((L_r)_{r\in [0, 1]\cap \mathbb{Q}})$ in order to make explicit the operators $(L_r)_{r\in [0, 1]\cap \mathbb{Q}}$. Let  $\mathcal{L}_{HK}$ be the language $\mathcal{L}((L_r)_{r\in [0, 1]\cap \mathbb{Q}}, K)$.  In the remainder of this section, we mainly consider the logic $\Sigma_{HK}:=(\Sigma_H + (S_5)_K +\{H_1, H_2, H_3\}) \subseteq \mathcal{L}((L_r)_{r\in [0, 1]\cap \mathbb{Q}}, K)$, where the operators $L_r (r\in [0, 1]\cap \mathbb{Q})$ satisfy the logic $\Sigma_H$, and $K$ satisfies the well-known $S_5$ axioms
  and these operators are connected by the following 3 axioms:

\begin{itemize}
    \item $(H1): L_r \phi \rightarrow K L_r \phi$;
    \item $(H2): \neg L_r \phi \rightarrow K\neg  L_r \phi$;
    \item $(H3): K\phi \rightarrow L_1 \phi$.

\end{itemize}

\begin{defi} A \emph{knowledge-belief space} is a tuple $\langle \Omega, \mathcal{A}, T, \Pi\rangle$ where
    \begin{itemize}
        \item $\langle \Omega, \mathcal{A}, T\rangle$ is a Harsanyi type space;
        \item $\langle \Omega, \Pi\rangle$ is a knowledge space where $\Pi$ is a partition on $\Omega$;
        \item For each $w\in \Omega$ and $E\in \mathcal{A}$, $\Pi(w) \subseteq E$ implies $T(w)(E)=1$;
        \item  For each $w\in \Omega$ and $E\in \mathcal{A}$, $[T(w)] \subseteq E$ implies $\Pi(w) \subseteq E$.

    \end{itemize}
\end{defi}

\begin{thm} \label{CompletenessKnowledgeBelief} The logic $\Sigma_{HK}$ is sound and complete with respect to the class of
knowledge-belief spaces.

\end{thm}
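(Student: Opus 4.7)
The plan is to prove soundness and completeness separately. Soundness is routine: the $\Sigma_H$ axioms are validated by the Harsanyi type space structure $\langle \Omega, \mathcal{A}, T\rangle$ (Theorem~\ref{HarsanyiCompleteness}); the $S_5$ axioms $T_K$, $4_K$, $5_K$ hold because $\Pi$ is an equivalence partition; axiom $H_3$ is precisely condition~(3) of the knowledge-belief space definition applied to $E = [[\phi]]$; and $H_1, H_2$ follow from condition~(4), because the events $[[L_r\phi]]$ and $[[\neg L_r\phi]]$ are unions of $[T(\cdot)]$-classes (since $T(w)([[\phi]]) \geq r$ depends only on $T(w)$), so whenever $L_r\phi$ (resp.\ $\neg L_r\phi$) holds at $w$ one has $[T(w)] \subseteq [[L_r\phi]]$ (resp.\ $[[\neg L_r\phi]]$), and condition~(4) then yields $\Pi(w) \subseteq [[L_r\phi]]$ (resp.\ $[[\neg L_r\phi]]$), which is the semantic content of $KL_r\phi$ (resp.\ $K\neg L_r\phi$).

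For completeness, I would adapt the canonical-model construction of Theorem~\ref{HarsanyiCompleteness}. Given a $\Sigma_{HK}$-consistent formula $\psi$, form the finite local language $\mathcal{L}_{HK}[\psi]$ (enlarged to accommodate the $K$-modality at each depth) and the finite set $\Omega_{HK}[\psi]$ of maximal $\Sigma_{HK}$-consistent sets in $\mathcal{L}_{HK}[\psi]$. First, group atoms by their probability parts and construct the type function $T_\psi$ exactly as in Theorem~\ref{HarsanyiCompleteness}: pick a representative $\Gamma_{i_k}$ from each group $G_k$, extend it via Theorem~\ref{Crucial} to a full maximal consistent $\Gamma_{i_k}^\infty$ in $\mathcal{L}_{HK}$, and set $T_\psi(\Gamma) := T_\psi(\Gamma_{i_k})$ for every $\Gamma \in G_k$. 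Then define the partition $\Pi_\psi$ on $\Omega_{HK}[\psi]$ by declaring $\Gamma \sim_K \Xi$ iff $\Gamma$ and $\Xi$ contain the same $K$-formulas from $\mathcal{L}_{HK}[\psi]$.

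The truth lemma then splits in two. For probability formulas, the argument is identical to that in Theorem~\ref{HarsanyiCompleteness}. For the new case $\phi = K\chi$, the standard $S_5$ local-canonical-model argument applies: if $K\chi \in \Gamma$, then $\chi \in \Xi$ for every $\Xi \sim_K \Gamma$ by $T_K$ and the definition of $\sim_K$; conversely, if $\neg K\chi \in \Gamma$ the existence lemma supplies $\Xi \sim_K \Gamma$ with $\neg\chi \in \Xi$, since $\{K\psi' \in \mathcal{L}_{HK}[\psi] : K\psi' \in \Gamma\} \cup \{\neg\chi\}$ is $\Sigma_{HK}$-consistent by the usual argument using $T_K, 4_K, 5_K, K_K$, and $K$-necessitation. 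It then remains to verify the two knowledge-belief compatibility conditions. Condition~(3) follows from $H_3$: if $\Pi_\psi(\Gamma) \subseteq [\phi]$, the truth lemma gives $K\phi \in \Gamma$, hence $L_1\phi \in \Gamma$ by $H_3$, so $T_\psi(\Gamma)([\phi]) = 1$. Condition~(4) follows from $H_1, H_2$, and $T_K$: if $\Xi \sim_K \Gamma$, then $L_r\phi \in \Gamma \Leftrightarrow KL_r\phi \in \Gamma \Leftrightarrow KL_r\phi \in \Xi \Leftrightarrow L_r\phi \in \Xi$, and dually for $\neg L_r\phi$ via $H_2$, so $\Gamma$ and $\Xi$ share the same probability part, hence $T_\psi(\Gamma) = T_\psi(\Xi)$ by construction, giving $\Pi_\psi(\Gamma) \subseteq [T_\psi(\Gamma)]$, which yields condition~(4).

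The main obstacle is precisely this compatibility between the $K$-partition $\Pi_\psi$ and the probability grouping used to build $T_\psi$. Axioms $H_1$ and $H_2$ force the $K$-partition to refine the probability grouping, so that assigning a single probability measure per group remains coherent with the partition, and axiom $H_3$ forces each $K$-cell to carry probability~$1$. Once these compatibilities are established, completeness of $\Sigma_{HK}$ is a clean synthesis of the $\Sigma_H$ completeness proof and the standard $S_5$ local-canonical-model construction.
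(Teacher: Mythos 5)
The paper does not actually prove this theorem: it simply points the reader to \cite{FH94, Aum99b, Zhou07}. Your proposal therefore cannot be matched against an in-paper argument; what you have done is reconstruct, from the paper's own machinery, the proof that those references supply, and the reconstruction is sound in outline. Soundness is verified axiom by axiom, and your derivations of $H_1$, $H_2$ from condition (4) of the knowledge-belief space definition (via the observation that $[[L_r\phi]]$ is a union of $[T(\cdot)]$-classes) and of $H_3$ from condition (3) are correct. For completeness you superimpose the standard $S_5$ local canonical model on the probabilistic filtration of Theorem \ref{HarsanyiCompleteness}, with $H_1$, $H_2$, $T_K$ guaranteeing that the $K$-partition refines the probability grouping so that assigning one measure per group is coherent with the partition; this is the right key observation and is exactly what makes the two constructions compatible.

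One step deserves more care than you give it. Your verification of condition (3) invokes the truth lemma for $K\phi$ to conclude $K\phi\in\Gamma$ from $\Pi_\psi(\Gamma)\subseteq[\phi]$, but condition (3) quantifies over all events $E\in 2^{\Omega_{HK}[\psi]}$, and a generic $E$ is only of the form $[\phi]$ for a $\phi$ of maximal depth in $\mathcal{L}_{HK}[\psi]$, so that $K\phi$ and $L_1\phi$ fall outside the local language and the truth lemma does not apply to them. The repair is to note that by monotonicity it suffices to show $T_\psi(\Gamma)(\Pi_\psi(\Gamma))=1$, that $\Pi_\psi(\Gamma)=[\kappa]$ where $\kappa$ is the conjunction of the $K$-literals of $\Gamma$, and that $4_K$, $5_K$ and $H_3$ yield $K\theta\to L_1K\theta$ and $\neg K\theta\to L_1\neg K\theta$, whence $L_1\kappa\in\Gamma$ by Corollary \ref{PlusOne}; this in turn dictates precise closure conditions on $\mathcal{L}_{HK}[\psi]$ (it must contain $L_1$ of every $K$-literal it contains), which is exactly the bookkeeping you wave at with ``enlarged to accommodate the $K$-modality at each depth.'' A smaller point: in the existence lemma the set whose consistency you need is $\{\psi' : K\psi'\in\Gamma\}\cup\{\neg\chi\}$ (the unboxed formulas), with $4_K$ and $5_K$ then forcing the extension $\Xi$ to satisfy $\Xi\sim_K\Gamma$; as written you take the boxed formulas themselves, which only works after an extra appeal to $T_K$. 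None of this invalidates the approach; it is the level of detail that the cited references carry out and that a self-contained proof would need to supply.
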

\begin{proof} The interested reader may refer to the detailed proofs in \cite{FH94, Aum99b,Zhou07}.
\end{proof}

\begin{thm} $K$ is implicitly defined in the logic $\Sigma_{HK}$.
\end{thm}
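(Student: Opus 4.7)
By Proposition \ref{Implicit}, to prove that $K$ is implicitly defined in $\Sigma_{HK}$ it suffices to show that any two $\Sigma_{HK}$-algebras $\langle \mathcal{A}_0, K\rangle$ and $\langle \mathcal{A}_0, K'\rangle$ sharing the same underlying $\Sigma_H$-algebra $\mathcal{A}_0$ must satisfy $K=K'$.  Equivalently, this amounts to showing that $Kp \leftrightarrow K'p$ belongs to the combined logic $\Sigma_{HK} + \Sigma_{HK}[K/K']$.

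My plan is to argue semantically rather than syntactically.  First I would extend Theorem \ref{CompletenessKnowledgeBelief} to the combined logic, establishing that $\Sigma_{HK} + \Sigma_{HK}[K/K']$ is sound and complete with respect to the class of two-partition knowledge-belief spaces, i.e., tuples $\langle \Omega, \mathcal{A}, T, \Pi, \Pi'\rangle$ where $\langle \Omega, \mathcal{A}, T\rangle$ is a Harsanyi type space and both $\Pi$ and $\Pi'$ are partitions of $\Omega$ satisfying the two knowledge-belief conditions against the single type function $T$.  This should be a routine adaptation of the canonical model construction behind Theorem \ref{CompletenessKnowledgeBelief}: one builds a single canonical type function from the probability fragment, and then reads off two partitions from the $K$- and $K'$-equivalence classes respectively.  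The axioms $H_1, H_2$ for each knowledge operator force condition (4) on the corresponding partition, $H_3$ forces condition (3), and $(S_5)$ ensures that each relation is indeed an equivalence.  Having this extended completeness, I would invoke the uniqueness result of Aumann and Heifetz \cite{AH02} quoted in the paragraph preceding the theorem: in any knowledge-belief space the partition satisfying conditions (3) and (4) is uniquely determined by the type function and equals $[T(\cdot)]$.  Applied to a two-partition knowledge-belief space this forces $\Pi(w) = [T(w)] = \Pi'(w)$ for every $w \in \Omega$, whence $K$ and $K'$ induce the same operator on $\mathcal{A}$, so $Kp \leftrightarrow K'p$ is valid throughout the class.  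Extended completeness then yields $Kp \leftrightarrow K'p \in \Sigma_{HK} + \Sigma_{HK}[K/K']$, and Proposition \ref{Implicit} delivers the theorem.

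The main obstacle is the extended completeness step.  One has to check that the canonical model construction of Theorem \ref{CompletenessKnowledgeBelief} still goes through when a single maximally consistent set simultaneously contains formulas in $K$ and in $K'$, so that the two partitions induced on the canonical state space both satisfy the knowledge-belief conditions relative to the \emph{same} canonical type function $T_{\psi}$.  The verifications of conditions (3) and (4) for $\Pi$ and for $\Pi'$ are formally independent once the canonical type function is fixed, so no unintended interaction between $K$ and $K'$ needs to be axiomatized; after this check, the Aumann--Heifetz uniqueness of the information partition does all the remaining work.  A purely syntactic derivation of $Kp \leftrightarrow K'p$ appears considerably harder, precisely because the combined logic contains no axiom that directly links $K$ to $K'$ other than their common introspection of the probability operators $L_r$.
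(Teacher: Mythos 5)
Your strategy is genuinely different from the paper's, and its viability hinges entirely on the step you yourself flag as ``the main obstacle,'' which is not discharged. The paper proves the theorem with a short, self-contained equational computation inside an arbitrary pair of $\Sigma_{HK}$-algebras $\langle \mathcal{A}_0, K_1\rangle$ and $\langle \mathcal{A}_0, K_2\rangle$: from the identities $B^{=1}(K_ie)=K_ie$ and $K_i(B^{=1}e)=B^{=1}e$ (consequences of $H_1$--$H_3$ together with $4_K$, $5_K$ and $T_K$) one gets $K_2(K_1e)=K_2(B^{=1}(K_1e))=B^{=1}(K_1e)=K_1e$, and since $K_1e\leq e$ gives $K_2(K_1e)\leq K_2e$, this yields $K_1e\leq K_2e$; symmetry finishes. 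This needs no completeness theorem and no model theory, and it operates at exactly the level of generality demanded by Proposition \ref{Implicit}, which quantifies over \emph{all} $\Sigma_{HK}$-algebras, not only those arising from knowledge-belief spaces.

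The gap in your proposal is the extended completeness theorem for $\Sigma_{HK}+\Sigma_{HK}[K/K']$ with respect to two-partition knowledge-belief spaces. Note that, granted the Aumann--Heifetz uniqueness of the information partition, every space in that class validates $Kp\leftrightarrow K'p$; hence completeness with respect to that class \emph{immediately implies} the theorem, so the statement you defer as ``routine'' is at least as strong as the statement to be proved. It also cannot simply be quoted: the paper does not even contain the canonical-model construction for $\Sigma_{HK}$ itself (Theorem \ref{CompletenessKnowledgeBelief} is only cited), so ``adapting'' it means first reconstructing it and then verifying the truth lemma and both sets of knowledge-belief conditions for the two canonical partitions against the single canonical type function. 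Your sketch of why this should work (the verifications for $\Pi$ and $\Pi'$ use disjoint sets of axioms once $T_\psi$ is fixed) is plausible and I believe it can be carried out; there is also a minor caveat in applying the uniqueness $[T(w)]=\Pi(w)$ to arbitrary, possibly non-finite, members of the class, where the cells need not be measurable. As written, though, the proposal reduces the theorem to a harder unproved statement, whereas the intended proof is a five-line algebraic derivation from the bridge axioms.
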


\begin{proof} Let $\mathcal{A}_1=\langle \mathcal{B}, \neg, \wedge, 1, (B^{\geq r})_{r\in [0, 1]\cap \mathbb{Q}}, K_1\rangle$ and $\mathcal{A}_2=\langle \mathcal{B}, \neg, \wedge, 1, (B^{\geq r})_{r\in [0, 1]\cap \mathbb{Q}}, K_2\rangle$ be two $\Sigma_{HK}$-algebras, where $B^{\geq r}$ is the corresponding semantical interpretation of $L_r$, and $K_1$ and $K_2$ are the corresponding interpretations of the syntactical $K$ in $\mathcal{L}_{HK}$.  It is easy to check that the following equalities hold:

\begin{itemize}
    \item For any $e\in \mathcal{B}, B^{=1} (K_1 e) = K_1 e$;
    \item For any $e\in \mathcal{B}, B^{=1} (K_2 e) = K_2 e$;
    \item For any $e\in \mathcal{B},K_1 (B^{=1}  e) = B^{=1} e$;
    \item For any $e\in \mathcal{B},K_2 (B^{=1}  e) = B^{=1} e$;
\end{itemize}

Since $K_1 e = B^{=1} K_1 e$,
\begin{eqnarray}
K_2(K_1e) & = & K_2(B^{=1}(K_1 e)) \nonumber\\
 & = & B^{=1}(K_1 e)\nonumber \\
 &= & K_1 e\nonumber
\end{eqnarray}
Since $K_1 e \leq e$, $K_2 (K_1 e) \leq K_2 e$. So $K_1 e\leq K_2 e$.   Similarly, we can show that $K_2 e\leq K_1 e$.  So $K_1 e = K_2 e$.  That is to say, $K_1 = K_2$.
\end{proof}

The following example is adapted from Example 5.3 in \cite{HSS09a}.

 \begin{exa} Let $W=[0, 1], \mathcal{B}=\{B\subseteq [0, 1]:$ $B$ is a Borel set in
 [0, 1] and $\lambda (B) =0$ or 1 $\}$, and $\mathcal{U}=\{B\subseteq [0, 1]:$ $B$ is a Borel set in
 [0, 1] and $\lambda (B) =1\}$ where $\lambda$ is the Lebesgue measure.  It is easy to check that $\mathcal{B}$ is an algebra under the set complementation and the set intersection.  In some sense we can say that $\mathcal{U}$ is the set of all ``big" events in $[0,1]$. Define the probability operators $(B^{=r})_{r\in [0,1]\cap \mathbb{Q}}$ as follows:

 \begin{displaymath}
B^{=r} E = \left\{ \begin{array}{ll}
E\cup \{0\} & \textrm{if $E\in \mathcal{U}$ and $r=1$}\\
\emptyset & \textrm{if $E\in \mathcal{U}$ and $0<r<1$}\\
E^c\setminus \{0\} & \textrm{if $E\in \mathcal{U}$ and $r=0$}\\
E\setminus \{0\} & \textrm{if $E\not\in \mathcal{U}$ and $r=1$}\\
\emptyset & \textrm{if $E\not\in \mathcal{U}$ and $0<r<1$}\\
E^c\cup \{0\} & \textrm{if $E\not\in \mathcal{U}$ and $r=0$}\\
\end{array} \right.
\end{displaymath}

It is easy to check that $B^{=r}(r\in[0,1]\cap\mathbb{Q})$ indeed define probability operators
$B^{\geq r}(r\in [0,1]\cap \mathbb{Q})$ on the algebra $\mathcal{B}$.  Note that, if $0\leq  r\leq s\leq 1$, then $B^{\geq r} E\supseteq B^{\geq s}E$ for any $E\in \mathcal{B}$.

 \end{exa}

 \begin{lem}\label{Extension} The above defined algebra $\mathcal{A}= \langle\mathcal{B}, \neg, \cap, W, (B^{\geq r})_{r\in [0,1]\cap \mathbb{Q}}\rangle$ is a $\Sigma_H$-algebra that can not be extended to a $\Sigma_{HK}$-algebra.
 \end{lem}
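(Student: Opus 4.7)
The proof naturally splits into two parts: verifying that $\mathcal{A}$ satisfies the axioms of $\Sigma_H$, and then deriving a contradiction from any purported extension by a knowledge operator.

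For the first part, the key observation simplifying the verification is that for $0<r\leq 1$ one has $B^{\geq r}e = B^{=1}e$ in this algebra, so the map $r\mapsto B^{\geq r}e$ is a two-valued step function. Consequently (ARCH) is vacuous, and the rest of the axiom scheme reduces to a few cases depending on whether $e$ lies in $\mathcal{U}$. The plan is to observe that the pairwise meets appearing on the left of (A3), (A4) and (A5) always turn out to be empty, so these axioms hold trivially; (A1), (A2), (DIS) are direct from the definitions; and the Harsanyi axioms $(4_p)$ and $(5_p)$ hold because whenever $B^{\geq r}e$ or its complement is nonempty, it is either a measure-one set containing $0$ or a measure-zero set avoiding $0$, and by inspection both kinds of sets are fixed by $B^{\geq 1}$.

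For the second part, suppose toward contradiction that $K:\mathcal{B}\to\mathcal{B}$ makes $\langle\mathcal{A},K\rangle$ a $\Sigma_{HK}$-algebra. Since $K$ is a normal $S_5$ operator, it is monotone and the fixed-point set $\mathcal{F}_K=\{e\in\mathcal{B}:Ke=e\}$ is a Boolean subalgebra of $\mathcal{B}$. I would then note that $(H_1)$ and $(H_2)$ combined with $(T_K)$ force every element of the form $B^{\geq r}e$ to lie in $\mathcal{F}_K$.

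The pivotal step is to show that this requirement is incompatible with $(T_K)$ and $(H_3)$ on the atom $\{0\}$. For every $x\in(0,1]$, the singleton $\{x\}$ is Borel of measure $0$, hence lies in $\mathcal{B}\setminus\mathcal{U}$, so by the definition of the operators $B^{\geq 1}\{x\}=\{x\}\setminus\{0\}=\{x\}$, and therefore $\{x\}\in\mathcal{F}_K$. Monotonicity of $K$ then yields $\{x\}=K\{x\}\leq K(0,1]$ for every such $x$, so $(0,1]\subseteq K(0,1]$; combined with $(T_K)$ this forces $K(0,1]=(0,1]$. Closure of $\mathcal{F}_K$ under complementation now gives $K\{0\}=\{0\}$. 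On the other hand $(H_3)$ forces $K\{0\}\leq B^{\geq 1}\{0\}=\{0\}\setminus\{0\}=\emptyset$, so $K\{0\}=\emptyset$, contradicting $K\{0\}=\{0\}$. The main obstacle is not any single computation but identifying which element to test: one has to recognise that the atom $\{0\}$ is precisely where $B^{\geq 1}$ fails $(T_K)$, while simultaneously the uncountable collection of singletons in $(0,1]$ is precisely what forces $K$ to fix $(0,1]$ and hence $\{0\}$.
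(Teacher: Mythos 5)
Your second half is correct, and it is actually more careful than the paper's own argument. Where the paper simply asserts ``it is easy to check'' that $K(E)=E$ for $E\in\mathcal{U}$ with $0\notin E$ and that $B^{=1}(KE)=KE$ for every $E$, and then derives the contradiction from such an $E$, you supply the missing justifications: $H_1$ together with $T_K$ makes every $B^{\geq r}e$ a fixed point of $K$; the singletons $\{x\}=B^{\geq 1}\{x\}$ for $x\in(0,1]$ plus monotonicity and $T_K$ force $K(0,1]=(0,1]$; closure of the fixed-point set under complementation (which needs $5_K$ and $T_K$) gives $K\{0\}=\{0\}$; and $H_3$ gives $K\{0\}\leq B^{\geq 1}\{0\}=\emptyset$. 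Each step checks out, and both your argument and the paper's ultimately exploit the same structural fact, namely that the type sitting at $0$ is Lebesgue measure and so assigns probability $0$ to $\{0\}$ itself, which is incompatible with the truth axiom for any $K$ squeezed between the $B^{\geq r}$'s by $H_1$--$H_3$.

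The gap is in the first half. Your plan rests on the claim that ``the pairwise meets appearing on the left of (A3), (A4) and (A5) always turn out to be empty,'' and this is false for (A4). The antecedent of (A4) is a meet of \emph{complements}, $\neg B^{\geq r}(e\wedge f)\cap\neg B^{\geq s}(e\wedge\neg f)$, and for $r,s>0$ a direct computation shows it equals $\neg B^{\geq r+s}e$ exactly (for instance, if $e\wedge f\in\mathcal{U}$ and $e\wedge\neg f\notin\mathcal{U}$, the two conjuncts are $\bigl(\neg(e\wedge f)\bigr)\setminus\{0\}$ and $\bigl(\neg(e\wedge\neg f)\bigr)\cup\{0\}$, whose intersection is $(\neg e)\setminus\{0\}=\neg B^{\geq r+s}e$). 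So (A4) holds with \emph{equality}, not vacuously; concretely, with $e=f=\{1/2\}$ and $r=s=1/4$ the antecedent is $[0,1]\setminus\{1/2\}$, which is about as far from empty as possible. Likewise the antecedent of (A3) is not empty when $r=0$ or $t=0$ (it is $W$ or $B^{=1}(e\wedge\neg f)$), and those cases need monotonicity of $B^{\geq t}$ rather than vacuity. Only (A5), and (A3) with both indices positive, are genuinely vacuous. Your simplifying observation that $B^{\geq r}e=B^{=1}e$ for all $r>0$ is correct and does reduce everything to a small case analysis on membership in $\mathcal{U}$ and on whether $0$ belongs to the set, but the (A4) cases (and the index-$0$ cases of (A3)) must actually be computed, as in the paper's appendix, rather than dismissed as empty meets.
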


The tedious \emph{verification} of this lemma is relegated to the Appendix.

\begin{thm} $K$ can not be reduced to $(L_r)_{r\in [0, 1]\cap Q}$ in the logic  $\Sigma_{HK}$;
therefore it cannot be explicitly defined in this logic.
\end{thm}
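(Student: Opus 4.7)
The plan is to deduce the theorem directly from the algebraic characterizations of reducibility and explicit definability established in Propositions \ref{Reducible} and \ref{EquivalenceDef}, together with the counterexample algebra provided by Lemma \ref{Extension}. Concretely, I will apply Proposition \ref{Reducible} in its contrapositive form: if there exists some $\Sigma_H$-algebra $\mathcal{A}_0$ that admits no extension to a $\Sigma_{HK}$-algebra $\langle \mathcal{A}_0, K\rangle$ at all, then in particular there is no extension satisfying the additional requirement $\mathcal{O}^*_{\mathcal{A}} = \mathcal{O}^*_{\mathcal{A}_0}$, and hence $K$ is not reducible to $(L_r)_{r\in [0,1]\cap \mathbb{Q}}$ in $\Sigma_{HK}$.

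First I would recall the setup: here $\Lambda = \Sigma_{HK}$ is the ambient logic in the language $\mathcal{L}_{HK} = \mathcal{L}((L_r)_{r\in[0,1]\cap \mathbb{Q}}, K)$, and $\Lambda_0 = \Sigma_{HK}\cap \mathcal{L}((L_r)_{r\in[0,1]\cap \mathbb{Q}}) = \Sigma_H$, since by Theorem \ref{HarsanyiCompleteness} and Theorem \ref{CompletenessKnowledgeBelief} neither logic proves anything in the probability-only fragment beyond what $\Sigma_H$ proves. Thus a $\Lambda_0$-algebra is exactly a $\Sigma_H$-algebra and a $\Lambda$-algebra is exactly a $\Sigma_{HK}$-algebra. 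The next step is to invoke Lemma \ref{Extension}, which exhibits a particular $\Sigma_H$-algebra $\mathcal{A} = \langle \mathcal{B}, \neg, \cap, W, (B^{\geq r})_{r\in [0,1]\cap \mathbb{Q}}\rangle$ built from the Lebesgue-null/conull Borel algebra on $[0,1]$ that admits no extension to a $\Sigma_{HK}$-algebra. Combined with Proposition \ref{Reducible}, this immediately yields the non-reducibility of $K$.

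For the second claim, I would invoke Proposition \ref{EquivalenceDef}: explicit definability is equivalent to the conjunction of implicit definability and reducibility. Since reducibility already fails, explicit definability must also fail, regardless of whether $K$ is implicitly defined (which it is, by the previous theorem of the section). This gives the full statement.

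I do not expect a genuine obstacle in the argument itself, since the heavy lifting is done by Lemma \ref{Extension} and by the general definability equivalences. The only care needed is to verify that $\Sigma_{HK}$ is indeed a conservative extension of $\Sigma_H$ in the probability-only fragment, so that the notion of $\Lambda_0$-algebra in Proposition \ref{Reducible} coincides with the notion of $\Sigma_H$-algebra used in Lemma \ref{Extension}; this follows from the soundness and completeness results already established, since any $\Sigma_H$-consistent formula of $\mathcal{L}((L_r)_{r\in[0,1]\cap\mathbb{Q}})$ is satisfiable in a Harsanyi type space, which can be expanded to a knowledge-belief space by taking $\Pi(w) = [T(w)]$, and hence remains $\Sigma_{HK}$-consistent.
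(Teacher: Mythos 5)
Your proposal is correct and follows essentially the same route as the paper, which likewise derives the theorem directly from Lemma \ref{Extension} together with Propositions \ref{Reducible} and \ref{EquivalenceDef}. Your additional care about the conservativity of $\Sigma_{HK}$ over $\Sigma_H$ (so that $\Lambda_0$-algebras are exactly $\Sigma_H$-algebras) is a point the paper only settles afterwards, via Theorem \ref{ConservationKnowledgeBelief} and its corollary, but it is the same argument.
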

\begin{proof} This theorem follows directly from the above Lemma \ref{Extension}, Propositions \ref{Reducible} and \ref{EquivalenceDef}.

\end{proof}

\begin{thm} \label{ConservationKnowledgeBelief}Every Harsanyi type space $\langle \Omega, \mathcal{A}, T\rangle$ can be extended to a knowledge-belief system $\langle \Omega, \mathcal{A}, T, \Pi\rangle$.

\end{thm}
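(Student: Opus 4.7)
The plan is to exhibit the partition $\Pi$ that is already built into the type function. Define an equivalence relation $\sim$ on $\Omega$ by $w\sim w'$ iff $T(w)=T(w')$, and let $\Pi$ be the associated partition, so that $\Pi(w)=[T(w)]=\{w'\in\Omega:T(w')=T(w)\}$ for every $w\in\Omega$. Since $\sim$ is plainly reflexive, symmetric and transitive, $\Pi$ is a well-defined partition of $\Omega$, and $\langle\Omega,\Pi\rangle$ is a knowledge space in the required sense.

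Next I would verify in turn the four clauses of the definition of a knowledge-belief space for the tuple $\langle\Omega,\mathcal{A},T,\Pi\rangle$. Clause (i) is immediate since $\langle\Omega,\mathcal{A},T\rangle$ is a Harsanyi type space by hypothesis, and clause (ii) holds by construction of $\Pi$. For clause (iii), fix $w\in\Omega$ and $E\in\mathcal{A}$ with $\Pi(w)\subseteq E$; then $[T(w)]\subseteq E\in\mathcal{A}$, so the Harsanyi property $(H)$ applied to $\langle\Omega,\mathcal{A},T\rangle$ gives $T(w)(E)=1$. For clause (iv), if $[T(w)]\subseteq E$ then $\Pi(w)=[T(w)]\subseteq E$ trivially. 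Thus $\langle\Omega,\mathcal{A},T,\Pi\rangle$ is a knowledge-belief space extending the given Harsanyi type space.

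The only point that requires a moment's thought is whether the cells $[T(w)]$ need to be $\mathcal{A}$-measurable. The paper has already observed that measurability of $[T(w)]$ is automatic when $\mathcal{A}$ is generated by a countable subalgebra, but in any case the clauses defining a knowledge-belief space quantify only over measurable sets $E\in\mathcal{A}$ and impose purely set-theoretic inclusions on $\Pi(w)$ and $[T(w)]$, so no measurability of the cells themselves is needed for the extension argument to go through. Hence there is no real obstacle; the content of the theorem is simply the observation that the partition canonically induced by the type function already witnesses the extension, with the Harsanyi condition $(H)$ supplying the single nontrivial implication.
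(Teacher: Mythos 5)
Your proposal is correct and takes exactly the route of the paper's own proof: define $\Pi(w):=[T(w)]$ and check the clauses, with clause (iii) following from the Harsanyi condition $(H)$ and clause (iv) being trivial. You simply spell out the verification that the paper leaves as "easy to check."
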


\begin{proof} Given a Harsanyi type space $\langle \Omega, \mathcal{A}, T\rangle$, we can define a partition on $\Omega$: for each $w\in \Omega$,
    \begin{center}
        $\Pi(w): =[T(w)]$
    \end{center}
In other words, the equivalence class containing $w$ is the set of all states whose probability measure is the same as that in $w$.  It is easy to check that $\langle \Omega, \mathcal{A}, T, \Pi\rangle$ is a knowledge-belief system.
\end{proof}

\begin{cor}  The logic $\Sigma_{HK}$ is a conservative extension of $\Sigma_H$.
\end{cor}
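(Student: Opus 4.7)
The plan is to prove the corollary by combining soundness and completeness of both logics with the extension result in Theorem \ref{ConservationKnowledgeBelief}. Since $\Sigma_H \subseteq \Sigma_{HK}$ is immediate from the definition of $\Sigma_{HK}$, it suffices to establish the reverse inclusion when restricted to $\mathcal{L}((L_r)_{r\in [0,1]\cap \mathbb{Q}})$: every formula $\phi$ in this smaller language that is a theorem of $\Sigma_{HK}$ is already a theorem of $\Sigma_H$.

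Equivalently, I would argue the contrapositive in its semantic form. Suppose $\phi \in \mathcal{L}((L_r)_{r\in [0,1]\cap \mathbb{Q}})$ is not a theorem of $\Sigma_H$. By the completeness of $\Sigma_H$ with respect to Harsanyi type spaces (Theorem \ref{HarsanyiCompleteness}), $\neg\phi$ is satisfiable in some Harsanyi type space $\langle \Omega, \mathcal{A}, T\rangle$ equipped with a valuation $v$, say at some state $w_0$. By Theorem \ref{ConservationKnowledgeBelief}, this Harsanyi type space extends to a knowledge-belief system $\langle \Omega, \mathcal{A}, T, \Pi\rangle$ (with $\Pi(w) := [T(w)]$), and equipping it with the same valuation $v$ yields a model for $\Sigma_{HK}$.

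The key observation is that the truth of $\phi$ at $w_0$ depends only on the $L_r$-clauses and the propositional valuation, not on $\Pi$: for every subformula $\psi$ of $\phi$, the set $[[\psi]]$ is computed identically in both models by a straightforward induction on the structure of $\psi$, since no subformula contains the operator $K$. Hence $\neg\phi$ is satisfied at $w_0$ in the knowledge-belief system as well. By the soundness of $\Sigma_{HK}$ with respect to the class of knowledge-belief spaces (Theorem \ref{CompletenessKnowledgeBelief}), $\phi$ is not a theorem of $\Sigma_{HK}$, which is what we wanted.

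There is no real obstacle here; the argument is essentially a semantic routing through the extension theorem. The only point requiring a brief check is the induction that the interpretations $[[\psi]]$ coincide in the Harsanyi type space and in its knowledge-belief extension for every $\psi \in \mathcal{L}((L_r)_{r\in [0,1]\cap \mathbb{Q}})$, which is immediate because the clause for $L_r$ refers only to $T$ and the Boolean clauses refer only to $v$ and the Boolean operations on $\mathcal{A}$.
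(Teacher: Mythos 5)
Your proof is correct and follows essentially the same route as the paper, which derives the corollary from completeness of $\Sigma_H$ for Harsanyi type spaces, soundness/completeness of $\Sigma_{HK}$ for knowledge-belief spaces, and Theorem \ref{ConservationKnowledgeBelief}; you have merely spelled out the routine induction showing that $K$-free formulas have the same interpretation before and after the extension. (The paper's citation of Theorem \ref{Completeness} for $\Sigma_+$ rather than Theorem \ref{HarsanyiCompleteness} for $\Sigma_H$ appears to be a slip; your choice is the right one.)
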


\begin{proof} This follows immediately from Theorem \ref{Completeness}, Theorem \ref{CompletenessKnowledgeBelief} and the above conservation theorem \ref{ConservationKnowledgeBelief}.
\end{proof}

\section{Related works and Conclusions}

It is interesting to note the similarity of our axiomatization $\Sigma_+$ in \cite{Zhou09} to the list of properties about semantic belief operators $B^{\geq r}$ in the Introduction.     Despite the similarity, our proof of the completeness of $\Sigma_+$ \cite{Zhou07,Zhou09} is in keeping with the Kripke-style proof of completeness in modal logic \cite{BRV00}, and  is quite different from Samet's analytical argument in \cite{Sam00} about semantical operators $B^{\geq r}$. Our axiomatization is in the spirit of modal and coalgebraic logic \cite{Moss99}.
As in ordinary modal logic, our language is \emph{finitary} in the sense that we allow only for \emph{finite} conjunctions and disjunctions.
So it may look strange to show weak completeness by including such an infinitary rule as $(ARCH)$. But this strangeness can be explained away in our completeness proof which is given through a \emph{probabilistic version} of standard filtration method in modal logic \cite{Zhou07}.  In this method, the crucial step is to define a probability measure at each state in the finite canonical model for a given consistent formula $\phi$.  In order to achieve this, we  extend every maximally consistent set of formulas by decreasing the granularity of rational indices in the language for filtration.  This extension is possible \emph{only} by including $(ARCH)$.  This is due to the Archimedean property for probability indices in the language.
Moreover, one may apply Proposition 3.2 in \cite{HM01} to show the necessity of $(ARCH)$ for the weak completeness of $\Sigma_+$.   Since all of our contributions in the paper are related to weak completeness and have nothing to do with strong completeness, we decide not to discuss strong completeness here. One may refer to \cite{KozenLMP13, KozenMP13, ZhouY12, MardareCL12,Zhou11} for details about this topic. Also it may be interesting to note the close relation of our Archimedean rule to the approximation of Markov processes \cite{DGJP03,ZhouY12,ChaputDPP09}.

Our above system is motivated by the work by Fagin et.al. in \cite{FHM90, FH94} and more directly the work by Heifetz and Mongin in \cite{HM01}. However, our approach to completeness \cite{Zhou07,Zhou09} differs from theirs in that we don't use any arithmetic formulas like
reasoning about linear inequalities in \cite{FH94} or any arithmetic
style rule like the following rule (B) in \cite{HM01}. Let $(\phi_1,
\cdots, \phi_m)$ be a finite sequence of formulas and $\phi^{(k)}$
denote $\bigvee_{1\leq i_1 <\cdots <i_k\leq n}(\phi_{i_1}\wedge
\cdots \wedge \phi_{i_k})$.
 $\bigwedge_{k=1}^m (\phi^{(k)}\leftrightarrow \psi^{(k)})$
is denoted as $(\phi_1, \cdots, \phi_m)\leftrightarrow (\psi_1,
\cdots, \phi_m)$.  The inference rule (B) can be stated as follows:

\begin{center}
$(B): \rool{ ((\phi_1, \cdots, \phi_m)\leftrightarrow (\psi_1,
\cdots, \psi_m))}{ ((\bigwedge_{i=1}^m L_{r_i}\phi_i)\wedge
(\bigwedge_{j=2}^m M_{s_j}\psi_j))\rightarrow L_{(r_1+\cdots +
r_m)-(s_2+\cdots + s_m)}\psi_1}{\mbox ,}$
\end{center}

 \noindent for  $(r_1+\cdots + r_m)-(s_2+\cdots + s_m)\in [0, 1]$.

In this paper we have shown some important properties for probability logic for Harsanyi type spaces. First, we present an axiomatization $\Sigma_H$ for the class of Harsanyi type spaces, which is different from those in the literature in our infinitary Archimedean rule (ARCH), and employ a probabilistic version of filtration method to show its completeness. By using the same method, we show both a denesting property and a unique extension theorem for $\Sigma_H$.  Moreover, we apply these properties to show that the canonical model of $\Sigma_H$ for a single agent in a finite local language is finite.  In contrast, we prove by demonstrating a continuum of different $J^r$-lists through a special Harsanyi type space called bi-sequence space that, for at least 2 agents, the canonical space has the continuum of different states.  The difference between the cardinalities of these canonical spaces illustrates the relative complexity of multi-agent interactive epistemology compared with the one-single-agent epistemology from the perspective of probabilistic beliefs. Aumann's knowledge-belief systems are Harsanyi type spaces with an appropriate knowledge operator. Finally we formulate the relationship between knowledge and probabilistic beliefs in Aumann's knowledge-belief systems by generalizing the three notions of definability in multi-modal logic \cite{HSS09a}.

Our logical characterization of the relationship between knowledge and probabilistic beliefs is motivated by Aumann's knowledge-belief systems in \cite{Aum99b}.  Aumann has constructed a canonical knowledge-belief space, that is, a knowledge-belief space which contains all maximally $\Sigma_{HK}$-consistent sets of formulas in the language $\mathcal{L}_{HK}$.   His construction of the canonical model inspired Heifetz and Samet's work showing the existence of a universal (Harsanyi) type space \emph{without} any topological assumptions \cite{HS98}.  Following a similar line, Meier proved the existence of a universal knowledge-belief structures under the condition that the knowledge operators of the agents in a knowledge-belief space operate \emph{only on measurable subsets} of the space \cite{Meier08}.  Not every knowledge-operator is induced by a partition \cite{Samet10}.  So knowledge as an operator is related to but different from knowledge induced by a partition. This difference can be explained by the two different semantics in modal logic: algebraic semantics vs frame-based semantics \cite{BRV00}.  Actually, the above three notions of definability in multi-modal logics are characterized in \emph{algebraic} semantics not in frame-based semantics as in Aumann's original formulation of knowledge-belief systems. In the present context, a pressing problem is to understand  in knowledge-belief systems the relationship of knowledge as operator and knowledge induced by a partition in order to characterize the interaction between knowledge-hierarchy \cite{HS98a} and belief hierarchy \cite{HS98} in these systems.  We also expect to apply our logic for Harsanyi type spaces to analyze finite depth of reasoning and equilibrium  in
games with incomplete information \cite{Kets13}.

\section*{Acknowledgements}

This paper is based on my PhD dissertation \cite{Zhou07} from Indiana University.  I am more than grateful to my PhD
advisors Professors Jon Michael Dunn and Lawrence Moss for their
constant guidance. Also I would like to thank  Professors  Ernst-Erich Doberkat, Daniel
Leivant, Ingo Battenfeld, Pedro S{\'a}nchez Terraf, Jason Teutsch, Cornelia van Cott,  and Joshua Sack for helpful comments at different stages of this research.  Finally I want to thank two referees for their critical comments to help improve an earlier version of this paper.

\bibliographystyle{plain}
\bibliography{EED}

\section*{Appendix: Some Proofs}

\begin{proof} (Proof of Lemma \ref{MainLemma}) Our proof here is adapted from the Crucial Lemma 4.5 in \cite{HS98}.
    It is easy to see that $(2)\Rightarrow (1)$. Now we show the other direction. Let
    $\mathcal{F}_0$ be the $\sigma$-algebra generated by the set $\{B^{\geq r}(E): E\in \mathcal{A}_0, r\in [0, 1]\}$ and $\mathcal{F}$ be the $\sigma$-algebra generated by the set $\{B^{\geq r}(E): E\in \mathcal{A}, r\in [0, 1]\}$.  In order to show this lemma, it suffices to prove that $\mathcal{F}_0=\mathcal{F}$. Indeed, for $E\in \mathcal{A}$,  if $\mathcal{F}=\mathcal{F}_0$, $B^{\geq r}(E)\in \mathcal{F}_0$. On the other hand, (1) implies that $\mathcal{F}_0\subseteq \mathcal{A}$. So it follows that $B^{\geq r}(E)\in \mathcal{A}$ and hence (2) is proved.
    Define $\mathcal{A}'=\{E\in \mathcal{A}: B^{\geq r}(E)\in \mathcal{F}_0$ for all r$\}$. It is easy to see that $\mathcal{A}_0\subseteq \mathcal{A}'$.  According to Halmos' monotone class theorem, in order to show that $\mathcal{F}\subseteq \mathcal{F}_0$, it suffices to show that $\mathcal{A}'$ is a monotone class.  Indeed, if we show that $\mathcal{A}'$ is a monotone class, then the $\sigma$-algebra $\mathcal{A}$ generated by $\mathcal{A}_0$ is also a subset of $\mathcal{A}'$, which implies that $\mathcal{F}\subseteq \mathcal{F}_0$.

    \begin{clm} If $(E_n)_{n=1}^{\infty}$ be a decreasing sequence of events in $\mathcal{A}'$, then $\bigcap_{n=1}^{\infty}E_n\in \mathcal{A}'$.
    \end{clm}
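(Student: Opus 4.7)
The plan is to use continuity from above of the probability measure $T(w,\cdot)$ at each state $w$ to establish the set-theoretic identity
\[
B^{\geq r}(E) \;=\; \bigcap_{n=1}^{\infty} B^{\geq r}(E_n),
\]
where $E := \bigcap_n E_n$ and $r \in [0,1]$ is arbitrary. Once this identity is in hand, the conclusion $E \in \mathcal{A}'$ is immediate, since $\mathcal{F}_0$ is a $\sigma$-algebra and each $B^{\geq r}(E_n)$ belongs to $\mathcal{F}_0$ by hypothesis.

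First I would note that $E \in \mathcal{A}$, which is immediate from the fact that $\mathcal{A}$ is a $\sigma$-algebra containing every $E_n$. Then, fixing $r \in [0,1]$, I would establish the two inclusions of the displayed identity separately. The inclusion $B^{\geq r}(E) \subseteq \bigcap_n B^{\geq r}(E_n)$ follows from the monotonicity of $T(w,\cdot)$: for any $w \in B^{\geq r}(E)$, since $E \subseteq E_n$ we have $T(w,E_n) \geq T(w,E) \geq r$, so $w \in B^{\geq r}(E_n)$ for every $n$. The reverse inclusion is the substantive step and is where continuity from above enters: because $E_n \downarrow E$ and $T(w,\cdot)$ is a probability measure (hence $\sigma$-additive), one has $T(w,E_n) \downarrow T(w,E)$; if $T(w,E_n) \geq r$ holds for every $n$, then passing to the limit gives $T(w,E) \geq r$, so $w \in B^{\geq r}(E)$.

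Having established the identity, I would conclude by observing that each $B^{\geq r}(E_n)$ lies in $\mathcal{F}_0$ because $E_n \in \mathcal{A}'$, and hence $B^{\geq r}(E)$, being a countable intersection of members of $\mathcal{F}_0$, also lies in $\mathcal{F}_0$. Since $r \in [0,1]$ was arbitrary, this shows $E \in \mathcal{A}'$.

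The main obstacle, to the extent there is one, is merely verifying that the limit passage in continuity from above is applicable here; this is unproblematic because $T(w,\cdot)$ is assumed to be a probability measure for every $w$, so no question of $\sigma$-additivity arises. The same template—with continuity from below in place of continuity from above, yielding the dual identity $B^{\geq r}(\bigcup_n E_n) = \bigcap_{s<r}\bigcup_n B^{\geq s}(E_n)$ for increasing sequences (or more simply the complementary formulation via $\neg B^{\geq r}$)—will dispatch the companion claim for increasing unions and thus complete the proof that $\mathcal{A}'$ is a monotone class, which is what the surrounding Halmos monotone-class argument requires.
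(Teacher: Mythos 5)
Your proof is correct and follows essentially the same route as the paper: both establish the identity $B^{\geq r}(\bigcap_n E_n)=\bigcap_n B^{\geq r}(E_n)$ via continuity from above of the probability measure $T(w,\cdot)$ and then conclude membership in $\mathcal{F}_0$ by closure under countable intersections. Your write-up merely spells out the two inclusions (monotonicity for one direction, the limit passage for the other) that the paper leaves implicit.
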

    Proof of the claim: For each $w
    \in \Omega$, since $T(w, \cdot)$ is a probability measure , $T(w, E_n)$ converges to $T(w,\bigcap_{n=1}^{\infty}E_n )$. This implies the following equality:
        \begin{center}
            $\{w\in \Omega: T(w, \bigcap_{n=1}^{\infty}E_n)\geq r\}= \bigcap_{n=1}^{\infty}\{w\in \Omega: T(w, E_n)\geq r\}$
        \end{center}

    Simply, $B^{\geq r}(\bigcap_{n=1}^{\infty}E_n)=\bigcap_{n=1}^{\infty}B^{\geq r}(E_n)\in \mathcal{F}_0$. It follows that $\bigcap_{n=1}^{\infty}E_n\in \mathcal{A}'$.

    \begin{clm} If $(E_n)_{n=1}^{\infty}$ be a increasing sequence of events in $\mathcal{A}'$, then $\bigcup_{n=1}^{\infty}E_n\in \mathcal{A}'$.
    \end{clm}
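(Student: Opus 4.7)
The plan is to mirror the argument for the decreasing case (Claim 1), but using continuity \emph{from below} of the probability measures $T(w,\cdot)$ together with a small Archimedean-style trick to get around the strict inequality arising in the limit.

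First I would fix $r \in [0,1]$ and the increasing sequence $(E_n)$ and aim to express $B^{\geq r}\!\bigl(\bigcup_{n=1}^\infty E_n\bigr)$ as a countable combination of sets of the form $B^{\geq s}(E_n)$, each of which lies in $\mathcal{F}_0$ by the assumption that $E_n\in\mathcal{A}'$. Unlike the decreasing case, where $T(w,E_n)\downarrow T(w,\bigcap E_n)$ and the threshold event at level $r$ is preserved by a plain intersection, here $T(w,E_n)\uparrow T(w,\bigcup E_n)$, so a single state $w$ with $T(w,\bigcup E_n)=r$ need not satisfy $T(w,E_n)\geq r$ for any finite $n$. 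That is the one delicate point.

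The key identity I would establish is
\begin{equation*}
B^{\geq r}\!\Bigl(\bigcup_{n=1}^\infty E_n\Bigr) \;=\; \bigcap_{\substack{s\in\mathbb{Q}\cap[0,r)}} \; \bigcup_{n=1}^\infty B^{\geq s}(E_n),
\end{equation*}
with the convention that the right-hand side equals $\Omega$ when $r=0$ (no constraint). The inclusion $\subseteq$ is immediate: if $T(w,\bigcup E_n)\geq r$ then for every rational $s<r$ we have $T(w,\bigcup E_n)>s$, so by continuity from below some $E_n$ already satisfies $T(w,E_n)\geq s$. For $\supseteq$, suppose $w$ lies in the right-hand side; then for every rational $s<r$ some $n$ gives $T(w,E_n)\geq s$, whence $T(w,\bigcup E_n)\geq s$. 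Letting $s\uparrow r$ through rationals (the Archimedean property of $\mathbb{Q}$ in $[0,1]$) yields $T(w,\bigcup E_n)\geq r$.

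Once this identity is in hand, each $B^{\geq s}(E_n)$ belongs to $\mathcal{F}_0$ by hypothesis on $E_n\in \mathcal{A}'$, the inner union is countable, and the outer intersection ranges over the countable set $\mathbb{Q}\cap[0,r)$, so the whole set lies in $\mathcal{F}_0$. Hence $\bigcup_{n=1}^\infty E_n\in\mathcal{A}'$, completing the claim. With both monotone-class claims established, $\mathcal{A}'$ is a monotone class containing the algebra $\mathcal{A}_0$, so Halmos' monotone class theorem gives $\mathcal{A}\subseteq \mathcal{A}'$, hence $\mathcal{F}\subseteq \mathcal{F}_0$, finishing the proof of Lemma~\ref{MainLemma}. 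The main (minor) obstacle is just handling the strictness in the limit: the rational-threshold reformulation above is what makes the argument go through without needing an uncountable intersection.
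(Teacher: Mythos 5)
Your proof is correct and is essentially the paper's own argument: the paper writes $B^{\geq r}\bigl(\bigcup_n E_n\bigr)=\bigcap_{m=1}^{\infty}\bigcup_{n=1}^{\infty}B^{\geq r-1/m}(E_n)$, which is the same countable ``approach $r$ from below, then union over $n$'' decomposition you give, just indexed by $r-1/m$ instead of rationals $s<r$. The continuity-from-below justification and the conclusion that the set lies in $\mathcal{F}_0$ match the paper exactly.
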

    Proof of the claim: For each $w
    \in \Omega$, since $T(w, \cdot)$ is a probability measure , $T(w, E_n)$ converges to $T(w,\bigcup_{n=1}^{\infty}E_n )$. This implies the following equality:
        \begin{center}
            $\{w\in \Omega: T(w, \bigcup_{n=1}^{\infty}E_n)\geq r\}= \bigcap_{m=1}^{\infty}\bigcup_{n=1}^{\infty}\{w\in \Omega: T(w, E_n)\geq r-1/m\}$
        \end{center}

    Simply, $B^{\geq r}(\bigcup_{n=1}^{\infty}E_n)=\bigcap_{m=1}^{\infty}\bigcup_{n=1}^{\infty}B^{\geq r-1/m}(E_n)\in \mathcal{F}_0$. It follows that $\bigcup_{n=1}^{\infty}E_n\in \mathcal{A}'$.

    So we have shown the lemma.
\end{proof}

\begin{proof}(Proof of Lemma \ref{Extension}) First we show that $\mathcal{A}$ is a $\Sigma_H$ algebra.
 \begin{enumerate}
 \item (A1) For any $E\in \mathcal{B}, B^{\geq 0}E = W$.
 \item  (A2) $B^{\geq r}W = W$ for all $r\in [0, 1]$.
 \item  (A3) We divide this case into several subcases:
    \begin{itemize}
        \item  Case 1: $E_1\in \mathcal{U}$ and $E_2\in \mathcal{U}$.  Assume that $r,s>0$. In this case, $E_1\cap E_2\in \mathcal{U}, E_1\cap \neg E_2 \not \in \mathcal{U}$, hence $B^{\geq r}(E_1\cap E_2) = (E_1\cap E_2)\cup \{0\}$, $B^{\geq s}(E_1\cap \neg E_2) = (E_1\cap \neg E_2)\setminus\{0\}$ and $B^{\geq r+s}(E_1) = E_1\cup \{0\}$. Obviously $B^{\geq r}(E_1\cap E_2)\cap B^{\geq s}(E_1\cap \neg E_2) = \emptyset\subseteq E_1\cup \{0\} =B^{\geq r+s}(E_1)$.  When $r=0$( $s=0$), $B^{\geq r}(E_1\cap E_2) = W (B^{\geq s}(E_1\cap \neg E_2) = W)$. Definitely, we have $B^{\geq s}(E_1\cap \neg E_2)\subseteq B^{\geq r+s}(E_1)(B^{\geq r}(E_1\cap E_2)\subseteq B^{\geq r+s}(E_1))$.
        \item Case 2: $E_1\in \mathcal{U}$ and $E_2\not\in \mathcal{U}$. The proof of this case is similar to that of the above case.
        \item Case 3: $E_1\not\in \mathcal{U}$ and $E_2\in \mathcal{U}$. Assume that $r > 0$ and $s>0$. In this case, $E_1\cap E_2\not\in \mathcal{U}, E_1\cap \neg E_2 \not \in \mathcal{U}$, hence $B^{\geq r}(E_1\cap E_2) = (E_1\cap E_2)\setminus \{0\}$, $B^{\geq s}(E_1\cap \neg E_2) = (E_1\cap \neg E_2)\setminus\{0\}$ and $B^{\geq r+s}(E_1) = E_1\setminus \{0\}$. Obviously $B^{\geq r}(E_1\cap E_2)\cap B^{\geq s}(E_1\cap \neg E_2) = \emptyset\subseteq E_1\setminus \{0\} =B^{\geq r+s}(E_1)$. When $r=0$( $s=0$), $B^{\geq r}(E_1\cap E_2) = W (B^{\geq s}(E_1\cap \neg E_2) = W)$. Definitely, we have $B^{\geq s}(E_1\cap \neg E_2)\subseteq B^{\geq r+s}(E_1)(B^{\geq r}(E_1\cap E_2)\subseteq B^{\geq r+s}(E_1))$.
        \item Case 4: $E_1\not\in \mathcal{U}$ and $E_2\not\in \mathcal{U}$. The proof of this case is similar to that in Case 3.

    \end{itemize}

    \item (A4) Similarly we divide the proof into the following several cases:
        \begin{itemize}
            \item Case 1: $E_1\in \mathcal{U}$ and $E_2\in \mathcal{U}$.  Assume that $r,s>0$. In this case, $E_1\cap E_2\in \mathcal{U}, E_1\cap \neg E_2 \not \in \mathcal{U}$, hence $\neg B^{\geq r}(E_1\cap E_2) = \neg [(E_1\cap E_2)\cup \{0\}]= [(\neg E_1)\cup(\neg E_2)]\setminus\{0\}$, $\neg B^{\geq s}(E_1\cap \neg E_2) = \neg [(E_1\cap \neg E_2)\setminus\{0\}]= (\neg E_1)\cup E_2\cup \{0\}$ and $\neg B^{\geq r+s}(E_1) = \neg [E_1\cup \{0\}] = (\neg E_1)\setminus\{0\}$. Obviously $\neg B^{\geq r}(E_1\cap E_2)\cap \neg B^{\geq s}(E_1\cap \neg E_2) = (\neg E_1) \setminus\{0\}= B^{\geq r+s}(E_1)$.  When $r=0$( $s=0$), $B^{\geq r}(E_1\cap E_2) = W (B^{\geq s}(E_1\cap \neg E_2) = W)$. Definitely, we have $\neg B^{\geq r}(E_1\cap \neg E_2)=\emptyset \subseteq B^{\geq r+s}(E_1)(\neg B^{\geq s}(E_1\cap E_2)\subseteq\neg B^{\geq r+s}(E_1))$.
            \item Case 2: $E_1\in \mathcal{U}$ and $E_2\not \in \mathcal{U}$. The proof of this case is similar to that of the above case.
            \item Case 3: $E_1\not \in \mathcal{U}$ and $E_2\in \mathcal{U}$.  Assume that $r,s>0$. In this case, $E_1\cap E_2\not \in \mathcal{U}, E_1\cap \neg E_2 \not \in \mathcal{U}$, hence $\neg B^{\geq r}(E_1\cap E_2) = \neg [(E_1\cap E_2)\setminus \{0\}]= [(\neg E_1)\cup(\neg E_2)]\cup\{0\}$, $\neg B^{\geq s}(E_1\cap \neg E_2) = \neg [(E_1\cap \neg E_2)\setminus\{0\}]= (\neg E_1)\cup E_2\cup \{0\}$ and $\neg B^{\geq r+s}(E_1) = \neg [E_1\setminus \{0\}] = (\neg E_1)\cup\{0\}$. Obviously $\neg B^{\geq r}(E_1\cap E_2)\cap \neg B^{\geq s}(E_1\cap \neg E_2) = (\neg E_1) \cup\{0\}= B^{\geq r+s}(E_1)$. When $r=0$( $s=0$), $B^{\geq r}(E_1\cap E_2) = W (B^{\geq s}(E_1\cap \neg E_2) = W)$.Definitely, we have $\neg B^{\geq r}(E_1\cap \neg E_2)=\emptyset \subseteq B^{\geq r+s}(E_1)(\neg B^{\geq s}(E_1\cap E_2)\subseteq\neg B^{\geq r+s}(E_1))$.
             \item Case 4: $E_1\not\in \mathcal{U}$ and $E_2\not\in \mathcal{U}$. The proof of this case is similar to that in Case 3.
        \end{itemize}

        \item (A5)  We divide the proof into the following subcases:
            \begin{itemize}
                \item Case 1: $E\in \mathcal{U}$. Assume that $r+s>1$. Since $0\leq r, s \leq 1$, $r>0$ and $s> 0$. It follows that $B^{\geq r} E = E\cup \{0\}$ and $\neg B^{\geq s}(\neg E) = \neg ((\neg E) \setminus\{0\})$.  Obviously, $B^{\geq r} E= E\cup \{0\}\subseteq \neg (B^{\geq s}(\neg E))$.
                \item Case 2: $E\not\in \mathcal{U}$. Assume that $r+s>1$. Since $0\leq r, s \leq 1$, $r>0$ and $s> 0$. It follows that $B^{\geq r} E = E\setminus \{0\}$ and $\neg B^{\geq s}(\neg E) = \neg ((\neg E) \cup\{0\})$.  Obviously, $B^{\geq r} E= E\setminus \{0\}\subseteq \neg (B^{\geq s}(\neg E))$.
            \end{itemize}
        \item (DIS) Assume that $E_1 \subseteq E_2$.  It follows directly that $B^{\geq r} E_1 \subseteq B^{\geq r} E_2$.
        \item (ARCH) Assume that $E\subseteq B^{\geq s} (E')$ for all $s< r$. Obviously $r >0$ and hence $r/2 > 0$.  Since $0 < r/2 < r$, $B^{\geq r} (E') = B^{\geq r/2}( E')$. It follows that $E \subseteq B^{\geq r}(E')$.
        \item (H1)  We divide the proof of this case into the following 3 subcases:
            \begin{itemize}
                \item Case 1: $r =0$. In this case,  it is obvious that $B^{\geq r} (E) = W = B^{\geq 1}(B^{\geq r}(E))$.
                \item Case 2: $r>0$ and $E\in \mathcal{U}$. It follows that $B^{\geq r} (E) = E\cup \{0\} = B^{\geq 1}(B^{\geq r}(E))$.
                \item Case 3: $r>0$ and $E\not\in \mathcal{U}$. It follows that $B^{\geq r} (E) = E\setminus \{0\} = B^{\geq 1}(B^{\geq r}(E))$.
                \end{itemize}
        \item (H2)  We divide the proof of this case into the following 3 subcases:
            \begin{itemize}
                \item Case 1: $r =0$. In this case,  it is obvious that $\neg B^{\geq r} (E) = \emptyset = B^{\geq 1}(\neg B^{\geq r}(E))$.
                \item Case 2: $r>0$ and $E\in \mathcal{U}$. It follows that $\neg B^{\geq r} (E) = \neg E\setminus \{0\} = B^{\geq 1}(\neg B^{\geq r}(E))$.
                \item Case 3: $r>0$ and $E\not\in \mathcal{U}$. It follows that $\neg B^{\geq r} (E) = \neg E\cup \{0\} = B^{\geq 1}(\neg B^{\geq r}(E))$.

            \end{itemize}

 \end{enumerate}
  Finally we have shown all the cases and hence have show that $\mathcal{A}$ is indeed a $\Sigma_H$-algebra.  Now we show that it can not be extended to a  $\Sigma_{HK}$-algebra. We prove this by contradiction.

  Suppose that it were. It is easy to check that the following propositions hold.
\begin{itemize}
    \item For any event $E\in \mathcal{U}$ and $0\in E$, $K(E)= B^{=1}E = E$;
    \item For any event $E\not\in \mathcal{U}$ and $0\not\in E$, $K(E)= B^{=1}E = E$;
    \item For any event $E\in \mathcal{U}$ and $0\not\in E$, $K(E)=  E \subsetneq B^{=1}(E) = E\cup \{0\}$;
    \item For any event $E\not\in \mathcal{U}$ and $0\in E$, $K(E)= B^{=1}E = E\setminus\{0\}$.
\end{itemize}
Note that, for any event $E$, $B^{=1}(K(E))= K(E)$.  For any event $E_3$ such that $E_3\in \mathcal{U}$ and $0\not\in E_3$,
\begin{center}
    $B^{=1}E_3= B^{=1}(K(E_3))= K(E_3)$
\end{center}
But this contradicts the fact that $B^{=1}E_3 \neq  K(E_3)$.  So we have shown the second part of this lemma.
 \end{proof}

\end{document}